\newtheorem{theorem}{Theorem}[section]
\newtheorem{lemma}[theorem]{Lemma}
\newtheorem{corollary}[theorem]{Corollary}
\newtheorem{proposition}[theorem]{Proposition}
\newtheorem{definition}[theorem]{Definition}
\newtheorem{question}[theorem]{Question}
\theoremstyle{remark}
\newtheorem{remark}[theorem]{Remark}
\def\QSet{\mbox{\rm\kern.24em
\vrule width.03em height1.48ex depth-.051ex \kern-.26em Q}}
\def\E{{\mbox{\rm I\kern-.22em E}}}
\def\P{{\bf P}}
\def\D{{\mathcal D}}
\def\T{{\bf T}}
\def\Z{{\bf Z}}
\def\R{{\bf R}}
\def\N{{\bf N}}
\def\C{{\bf C}}
\def\Q{{\bf Q}}
\def\BMO{{\operatorname{BMO}}}
\def\be#1{\begin{equation}\label{#1}}
\def\size{{\operatorname{size}}}
\def\J{{\bf J}}
\def\BMO{{\operatorname{BMO}}}
\def\F{{\mathcal F}}
\def\L{{\mathcal L}}
\def\G{{\mathcal G}}
\def\U{{\mathcal C}}
\def\B{{\mathcal R}}
\def\I{{\mathcal I}}
\def\dist{{\operatorname{dist}}}
\def\bas{\begin{align*}}
\def\eas{\end{align*}}
\def\bi{\begin{itemize}}
\def\ei{\end{itemize}}
\newenvironment{proof}{\noindent {\bf Proof} }{\endprf\par}
\def \endprf{\hfill  {\vrule height6pt width6pt depth0pt}\medskip}
\def\emph#1{{\it #1}}
\begin{document}
\title{On the two dimensional Bilinear Hilbert Transform}

\author{Ciprian Demeter}
\address{School of Mathematics, Institute for Advanced Study, Princeton NJ 08540}
\email{demeter@@math.ias.edu}

\author{Christoph Thiele}
\address{Department of Mathematics, UCLA, Los Angeles CA 90095-1555}
\email{thiele@@math.ucla.edu}

\keywords{Bilinear Hilbert Transform, phase-space projection}
\thanks{ AMS subject classification: Primary 42B20; Secondary 37A05}
\begin{abstract}
We investigate the Bilinear Hilbert Transform in the plane and the pointwise convergence of bilinear averages in Ergodic theory, arising from $\Z^2$ actions. Our techniques combine novel one and a half dimensional phase-space analysis with more standard one dimensional theory.
\end{abstract}

\maketitle

\tableofcontents

\section{Introduction}

In \cite{LT1}, \cite{LT2}, the following bounds were proved for the one dimensional Bilinear Hilbert Transform
\begin{theorem}
\label{onedimBHT}
\label{dsjfkhwey237r82347jsdkvnfdbvgu02i0-234i9rejgijrtio}
Let $\beta\not\in\{0,1\}$. The bilinear operator defined by the principal value integral
$$H(f,g)(x)=\int f(x+t)g(x+\beta t)\frac{dt}{t}$$
satisfies
$$\|H(f,g)\|_{p_3'}\lesssim \|f\|_{p_1}\|g\|_{p_2},
$$
whenever $\frac{1}{p_1}+\frac{1}{p_2}+\frac{1}{p_3}=1$, $1<p_1,p_2\le \infty$ and $\frac{2}{3}<p_3'<\infty.$
\end{theorem}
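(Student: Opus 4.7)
The plan is to dualize, reducing the bilinear bound to the trilinear estimate
$$\Lambda(f,g,h) = \int H(f,g)(x)\, h(x)\, dx \lesssim \|f\|_{p_1}\|g\|_{p_2}\|h\|_{p_3}$$
with $\frac{1}{p_1}+\frac{1}{p_2}+\frac{1}{p_3}=1$. After a change of variables $\Lambda$ takes a form invariant (up to the obvious symmetries) under a three-parameter group of joint translations, modulations and dilations; this is the correct symmetry group precisely when the three frequency directions determined by $(1,\beta,0)$ are pairwise distinct, which is where the assumption $\beta\notin\{0,1\}$ enters.

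The first step is a wave packet decomposition of the kernel $\frac{1}{t}$, producing a model sum
$$\Lambda(f,g,h) \approx \sum_{\p} \frac{1}{|I_\p|^{1/2}} \langle f,\phi_{p_1}\rangle\, \langle g,\phi_{p_2}\rangle\, \langle h,\phi_{p_3}\rangle,$$
indexed by tri-tiles $\p=(p_1,p_2,p_3)$: triples of phase-space rectangles of area one sharing a common spatial interval $I_\p$, with three well-separated frequency intervals placed in a fixed rank-$1$ configuration. Standard averaging and finite-truncation arguments reduce the theorem to a uniform bound on such sums over arbitrary finite collections.

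Next one organizes the tri-tiles into \emph{trees}: collections $T$ indexed by a top tri-tile $\p_T$ in which two of the three families of wave packets have pairwise disjoint frequency supports (so that $L^2$ Bessel bounds apply), while the third has nested frequency supports (controlled by a Carleson/BMO-type inequality). Combining these produces tree estimates of the shape
$$\Big|\sum_{\p\in T}\ldots\Big| \lesssim |I_T|\, \prod_{i=1}^{3}\size_i(T),$$
where $\size_i(T)$ is an $L^\infty$-normalized maximal wave packet coefficient of $f_i$ on $T$. One then executes the Lacey--Thiele selection algorithm: at each dyadic level $2^{-k_i}$ maximal trees on which $\size_i$ saturates are extracted, the totality of such trees is bounded by an energy $\lesssim \|f_i\|_{p_i}^{p_i}$, and the contributions are summed geometrically in $(k_1,k_2,k_3)$.

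The main obstacle, and the crux of \cite{LT1,LT2}, is pushing this machinery past the natural $L^2$-threshold to reach $p_3'<2$. In that regime the dual function $h$ can no longer be handled by Calder\'on--Zygmund methods applied to the adjoint operator; instead it must be treated symmetrically as a third factor whose size and energy are balanced against those of $f$ and $g$, which requires the restricted-type (quasi-)interpolation for the multilinear form together with exceptional set decompositions of $h$. Combining the full collection of restricted weak-type bounds by multilinear Marcinkiewicz interpolation then yields the entire range $\frac{2}{3}<p_3'<\infty$.
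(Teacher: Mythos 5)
Your outline is essentially the standard Lacey--Thiele time-frequency argument (dualization, wave packet/tri-tile model sums, tree estimates via size and energy, the greedy tree selection, and restricted weak-type interpolation past the $L^2$ threshold), which is precisely the proof of the cited references \cite{LT1}, \cite{LT2}; the paper itself does not reprove Theorem \ref{onedimBHT} but imports it from those works. So your proposal is correct as a sketch and takes the same route as the source the paper relies on.
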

In this paper we will investigate two dimensional versions of this result.

More precisely, let $K:\R^2\setminus \{(0,0)\}\to \R$ be a Calder\'on-Zygmund  kernel, that is a kernel satisfying
$$|\partial^{\alpha} \widehat{K}(\xi,\eta)|\lesssim \|(\xi,\eta)\|^{-|\alpha|},$$
for all $\alpha\in\Z_+^2$ with $0\le |\alpha|\le N^4$, and all $(\xi,\eta)\not=(0,0)$. Here $N$ is a large enough positive integer, whose value will not be specified.

We also consider the matrices $A_1,A_2\in M_2(\R)$, and the associated two 
dimensional Bilinear Hilbert Transform

$$T_{A_1,A_2}(F_1,F_2)(x,y):=\int_{R^2}F_1((x,y)+A_1(t,s))F_2((x,y)+A_2(t,s))K(t,s)dtds.$$

We will assume at least one of the $A_i$ is not singular. Due to symmetry, we may and will assume that $A_1$ is not singular.
We will investigate the mapping properties of $T_{A_1,A_2}$ in terms of the spectrum $Spec(B)$ of $B:=A_2A_1^{-1}$.

These questions have parallel interest in Ergodic theory. We investigate the implications of our analysis to Ergodic theory in the last section of the paper.

This material is based upon work supported by the National Science Foundation under agreement No. DMS-0635607. In addition, the first author was supported by NSF Grant DMS-0556389. The second author was supported by NSF Grant DMS-0701302. Any opinions, findings and conclusions or recommendations expressed in this material are those of the authors and do not necessarily reflect the views of the National Science Foundation.

\section{Classification}

We first note that by the change of variables $A_1(t,s)\to (t,s)$ it suffices to analyze operators of the form
$$\int_{R^2}F_1((x,y)+(t,s))F_2((x,y)+B(t,s))K(t,s)dtds.$$
Indeed, since $A_1$ is nonsingular, $\|A_1^{-1}(t,s)\|\sim \|(t,s)\|$, and the kernel
$K(A_1^{-1}(t,s))$ remains Calder\'on-Zygmund.

By dualizing, it suffices to consider instead the associated trilinear forms, defined by
$$\Lambda_{B}(F_1,F_2,F_3):=\int_{R^4}F_1((x,y)+(t,s))F_2((x,y)+B(t,s))F_3(x,y)K(t,s)dxdydtds,$$
and  to understand the range of exponents $p_i$ for which we have\footnote{We restrict attention to the Banach space case $1\le p_i\le \infty$}
$$|\Lambda_{B}(F_1,F_2,F_3)|\lesssim \prod_{i=1}^{3}\|F_i\|_{p_i}.$$
If $B$ is similar to another matrix $C$, say $C=ABA^{-1}$, then $\Lambda_{B}$ and $\Lambda_{C}$ have the same mapping properties. To see this, write
$$\Lambda_{B}(F_1,F_2,F_3)=\int_{R^4}F_1^{A}(A(x,y)+A(t,s))F_2^{A}(A(x,y)+AB(t,s))F_3^{A}(A(x,y))K(t,s)dxdydtds,$$
with $F^A(x,y):=F(A^{-1}(x,y))$. Note that the two functions have similar $L^{p}$ norms.
By changing variables $A(x,y)\to(x,y)$ and $A(t,s)\to(t,s)$ we recover $\Lambda_{C}(F_1^{A},F_2^A,F_3^A)$, and the claim follows.

The forms $\Lambda_{B}$ are associated with multipliers that are singular on the linear subspace (called the singularity) of $\R^6$ determined by the system of equations\footnote{The first two equations describe the support of the multiplier} 
$$\begin{cases} 
\xi_1+\xi_2+\xi_3=0 & \\ \eta_1+\eta_2+\eta_3=0 & \\ \xi_1+b_{11}\xi_2+b_{21}\eta_2=0 & \\  \eta_1+b_{12}\xi_2+b_{22}\eta_2=0& \end{cases}$$
Here $(\xi_i,\eta_i)$ are the frequency variables of $F_i$. The profile of the form $\Lambda_{B}$ depends on the extent to which its singularity is the graph of $(\xi_i,\eta_i)$ over $(\xi_j,\eta_j)$, for $i,j\in\{1,2,3\}$. 
This in turn can fail for one or more pairs $(i,j)$, giving rise to degeneracies. 
The appearance of a hierarchy of degeneracies is the main new phenomenon in two dimensions that we 
address in this paper. It prompts us to use what we think of one and a half dimensional time 
frequency analysis. 
The one dimensional Bilinear Hilbert Transform from 
Theorem \ref{onedimBHT} has only one type of degeneracy, when $\beta=0$ or $\beta=1$. In 
this case the operator is reduced to a linear Hilbert Transform, possibly applied to a product 
of two functions.

We will distinguish the following  cases, in each of which the singularity will be two dimensional. 

\begin{itemize}

\item Case 1. 
$$\{0,1\}\cap Spec(B)=\emptyset.$$
In this case our operator is completely non-degenerate. Its analysis is an adaption of the one dimensional theory to the two 
dimensional context much in the spirit of \cite{pramanikterwilleger}. See Section \ref{sec:nondeg}.

\item Case 2. $$Spec(B)=\{0\}.$$
In this case, by the Jordan canonical form theorem, $B$ will be similar with either $\left[\begin{array}{cc} 0 & 0 \\ 0 & 0\end{array}\right]$ or $\left[\begin{array}{cc} 0 & 1 \\ 0 & 0\end{array}\right]$.

In the first case we get 
$$\Lambda_{B}(F_1,F_2,F_3):=\int_{R^4}F_1(x+t,y+s)(F_2F_3)(x,y)K(t,s)dxdydtds.$$ As in the case of the one dimensional bilinear Hilbert transform, thanks to the full and uniform degeneracy, we immediately conclude that $\Lambda_{B}$ is bounded on $L^{p_1}\times L^{p_2}\times L^{p_3}$ if and only if\footnote{We will ignore the endpoint $L^1$ results} $\frac{1}{p_1}+\frac{1}{p_2}+\frac{1}{p_3}=1$, $1<p_1<\infty$ and $1<p_2,p_3\le\infty$. This follows from the well known two dimensional singular integral theory. 

In the second case, $\Lambda_{B}$ takes the form
$$\Lambda_{B}(F_1,F_2,F_3):=\int_{R^4}F_1(x+t,y+s)F_2(x+s,y)F_3(x,y)K(t,s)dxdydtds.$$
We prove its boundedness in Section \ref{sec:semidegworse}. The singularity can be parametrized as
$$\{(0,a,-a,b,a,-a-b):a,b\in\R\}$$
and one can easily see that neither $(-a,b)$ nor $(a,-a-b)$ is the graph over $(0,a)$.

\item Case 3. $$Spec(B)=\{1\}.$$
This is the case symmetric to Case 2, we will encounter the same possibilities. 
By the Jordan canonical form theorem, $B$ will be similar with either $\left[\begin{array}{cc} 1 & 0\\ 0 & 1\end{array}\right]$ or $\left[\begin{array}{cc} 1 & 1 \\ 0 & 1\end{array}\right]$. The change of coordinates $(x+t,y+s)\to (x,y)$ shows that these subcases correspond to the two subcases of Case 2.

\item Case 4.  $$Spec(B)=\{1,\lambda\},\;\;\lambda\notin\{0,1\}.$$
In this case $B$ is similar to $\left[\begin{array}{cc} \lambda & 0 \\ 0 & 1 \end{array}\right]$. The singularity can be parametrized as
$$\{(a,b,a,-b,-2a,0):a,b\in\R\}$$
and one can easily see that neither $(a,b)$ nor $(a,-b)$ is the graph over $(-2a,0)$. We address this case in detail in Section \ref{sjckldjksecsecsec}. 

\item Case 5.  $$Spec(B)=\{0,\lambda\},\;\;\lambda\notin\{0,1\}.$$
This gives the same possibilities as in Case 4, by the same reason Case 3 and Case 2 are equivalent.

\item Case 6. $$Spec(B)=\{0,1\}.$$
In this case $B$ is similar to $\left[\begin{array}{cc} 1 & 0 \\ 0 & 0\end{array}\right]$, and 
after substituting $y+s$ by $y$ and renaming things, the form is equivalent to
$$\Lambda_{B}(F_1,F_2,F_3):=\int_{R^4}F_1(x+t,y)F_2(x,y+s)F_3(x,y)K(t,s)dxdydtds.$$
The singularity can be parametrized as
$$\{(0,a,b,0,-a,-b):a,b\in\R\},$$
and it can easily be seen that more degeneracies are present here.
The methods we develop in this paper do not seem by themselves sufficient to address this very interesting and 
highly degenerate case. We hope that a further refinement of our techniques will tackle this problem.
\end{itemize}

Due to the degeneracies present in the operators we investigate, the traditional two dimensional\footnote{Here both phase and space are thought of as each representing one dimension} decompositions are ineffective, in that the associated model sums fail to be bounded. 

The main novelty of our approach in this paper lies in the use of one and a half dimensional\footnote{The ambient space for the phase dimension is $\R^2$; decompositions, projections and various structures like tiles, trees etc., will be referred to as one and a half dimensional if they live in $\R^2\times \R$} phase-plane projections. We exemplify this approach in Section \ref{sjckldjksecsecsec} for $B=\left[\begin{array}{cc} \lambda & 0 \\ 0 & 1\end{array}\right]$, and then briefly explain in Section \ref{sec:semidegworse} how our techniques also address the  case $B=\left[\begin{array}{cc} 0 & 1 \\ 0 & 0\end{array}\right]$. 

Finally, we point out the fact that the operators we investigate contain classical one dimensional operators with modulation invariance. We emphasize two important instances.

First, perhaps less surprisingly, the boundedness of the operator analyzed in Section \ref{sub:2} implies the boundedness of the one dimensional Bilinear Hilbert Transform, in some range. To see this, transfer first the result from $\R^2$ to the square torus $\T^2$. Then use $F_1(x,y)=f(x)$ and $F_2(x,y)=g(x)$, $\Phi=\chi_{[-1,1]}$, while $\Psi$ is an appropriate function which decomposes the kernel $1/t$.

Second, and quite strikingly, the boundedness of the operator analyzed in Section \ref{sec:semidegworse} implies the boundedness of the Carleson operator, in some range. In short Carleson's operator is defined by
$$C(f)(y):=\sup_{N\in\R}|\int f(y+s)e^{iNs}\frac{ds}s|.$$
It suffices now to chose $F_1(x,y)=f(y)$, $F_2(x,y)=e^{ixN(y)}g(y)$ and $F_3(x,y)=e^{-ixN(y)}h(y)$ with $\|g\|_{p_2}=\|h\|_{p_3}=1$, $\Psi=\chi_{[-1,1]}$, and $\Phi$ an appropriate function which decomposes the kernel $1/t$, and  to localize the estimates in Section \ref{sec:semidegworse}.

While this may appear as yet another proof of Carleson's classical theorem \cite{Carleson},
the argument of this paper in the special case above reduces largely to the proof
of Carleson's theorem in \cite{LT3}. But the approach in the current paper is further evidence
for a unified proof for bounds of the bilinear Hilbert transform and Carleson's operator, 
following up on the analogy that was stressed in \cite{LT3}.

We refer to the last section for an ergodic theoretic perspective.

%%%%%%%%%%%%%%%%%%%%%%%%%%%%%
\section{The Case 4 and 5} 
\label{sjckldjksecsecsec}
We will analyze the trilinear form associated with $B=\left[\begin{array}{cc} \lambda & 0 \\ 0 & 1\end{array}\right]$, where $\lambda\notin\{0,1\}$. All values of $\lambda\notin\{0,1\}$ are entirely typical, however, to minimize the number of parameters and to ease the exposition we will assume $\lambda=-1$. We thus look at 
 
$$\Lambda(F_1,F_2,F_3)=\int_{\R^4} F_1(x+t,y+s)F_2(x-t,y+s)F_3(x,y)K(t,s)dxdydtds.$$More precisely, we prove
\begin{theorem}
\label{Mainthmforsemidegform}
For each $2<p_i<\infty$ with $\frac{1}{p_1}+\frac{1}{p_2}+\frac{1}{p_3}=1,$ and each $F_i\in L^{p_i}(\R^2)$ we have
$$|\Lambda(F_1,F_2,F_3)|\lesssim \prod_{i=1}^{3}\|F_i\|_{p_i}.$$
\end{theorem}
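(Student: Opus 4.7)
The singularity $\{(a,b,a,-b,-2a,0)\}$ decouples along the two frequency directions. In $\xi$ we have $(\xi_1,\xi_2,\xi_3)=(a,a,-2a)$, a non-degenerate one dimensional BHT configuration with $\beta=-1$ (since $F_1(x+t)F_2(x-t)F_3(x)$ paired with a kernel $1/t$ is exactly the $\beta=-1$ bilinear Hilbert transform in $x$). In $\eta$ we have $(\eta_1,\eta_2,\eta_3)=(b,-b,0)$, which is fully degenerate: $F_3$ sits at zero $y$-frequency along the singularity while $F_1,F_2$ carry opposite $y$-frequencies. My strategy is therefore to combine BHT-style time-frequency analysis in the $x$-direction with a Littlewood--Paley / wave-packet analysis in the $y$-direction. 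This fits the paper's ``one and a half dimensional'' framework: tiles $P=I_P\times\omega_{\xi,P}\times\omega_{\eta,P}$ consist of an $(x,\xi)$-BHT component $I_P\times\omega_{\xi,P}$ with $|I_P||\omega_{\xi,P}|\sim 1$, together with an $\eta$-frequency interval $\omega_{\eta,P}$ that records the $y$-frequency scale of the $F_1,F_2$ wave packets, while the $F_3$ wave packet is concentrated in $y$-frequency near $0$.

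To pass to a discrete model I would first Littlewood--Paley decompose the 2D Calder\'on--Zygmund kernel: $\widehat K=\sum_{j,k}\widehat K_{j,k}$ with $\widehat K_{j,k}$ supported in the dyadic rectangle $|\tau|\sim 2^j$, $|\sigma|\sim 2^k$, and Fourier-series-expand each $\widehat K_{j,k}$ on its rectangle to write $K$ as a rapidly convergent sum of tensor-product building blocks $\phi_j(t)\psi_k(s)$. After substituting $u=y+s$, a single building block contributes
$$\int \Big(\int F_1(x+t,u)F_2(x-t,u)\phi_j(t)\,dt\Big)(\psi_k\ast_y F_3)(x,u)\,dx\,du,$$
an $x$-direction band-pass bilinear operator at scale $2^{-j}$ applied slice by slice in $u$, tested against $F_3$ Littlewood--Paley projected in $y$ at scale $2^{-k}$. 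Discretizing $(x,\xi)$-phase space into BHT tiles and $\eta$-frequency into dyadic intervals, and absorbing the rapid decay of the Fourier coefficients, produces the 1.5D model form
$$\sum_P a_P\,\langle F_1,\phi_{1,P}\rangle\langle F_2,\phi_{2,P}\rangle\langle F_3,\phi_{3,P}\rangle,$$
with $|a_P|\lesssim 1$ and wave packets $\phi_{i,P}$ adapted to the prescribed phase-space regions.

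The main work is then to estimate this model sum via the Lacey--Thiele tree selection scheme adapted to 1.5D tiles: organize the tiles greedily into trees of controlled size, prove a single-tree estimate bounding each tree's contribution by a product of sizes, and sum via H\"older, with sizes controlled by $L^p$ norms through a John--Nirenberg-type argument. The hard step is the \emph{single-tree estimate in the 1.5D setting}: for a tree $T$ with a common $x$-top whose constituent tiles carry varying $\eta$-frequency intervals, one must bound $\sum_{P\in T}a_P\prod_i\langle F_i,\phi_{i,P}\rangle$ by a product of sizes. The classical 1D BHT tree estimate handles the $(x,\xi)$-orthogonality slice by slice in $y$, but the $\eta$-summation over tiles within a tree requires an additional Bessel/Plancherel-type orthogonality in the $y$-direction; controlling this mixed interaction is what makes the argument genuinely 1.5-dimensional rather than a routine concatenation of 1D pieces. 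Once the tree estimate is in place, the outer combinatorics follow the blueprint of \cite{LT1,LT2}, and the restriction $2<p_i<\infty$ arises as the local-$L^2$ regime in which the underlying wave-packet orthogonality and square-function estimates have their $L^2$-based form.
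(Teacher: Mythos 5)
There is a genuine gap: your proposal misidentifies where the difficulty of this theorem lies, and two of its structural steps fail. First, the kernel reduction. You decompose $\widehat K$ into a two-parameter family of dyadic rectangles $|\tau|\sim 2^j$, $|\sigma|\sim 2^k$ and then estimate a model sum with absolute-value coefficients $|a_P|\lesssim 1$ over tiles with independent $x$- and $y$-scales. Since $K$ is a one-parameter Calder\'on--Zygmund kernel (singular only at the origin), the rectangle pieces have no decay in $|j-k|$, so this family cannot be summed piece by piece; one is effectively facing a bi-parameter trilinear form, and model sums of that type with absolute values inserted are not bounded. The paper instead performs a cone decomposition, which produces the one-parameter model \eqref{conedecompuuh} in which the $t$- and $s$-scales are tied together ($\Psi_k$, $\Phi_k$ at the same scale $2^k$), and only one of $\Psi,\Phi$ can be assumed to have mean zero.

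Second, and more seriously, your plan treats the $x$-direction as a non-degenerate $\beta=-1$ bilinear Hilbert transform, to be handled ``slice by slice in $y$'' by the classical Lacey--Thiele tree estimate, with some unspecified extra Bessel orthogonality in $\eta$. That picture is only correct for the cone with $\int\Psi=0$ (the easier case, Section \ref{sub:2}). In the main cone, where $\widehat K$ avoids the punctured $\eta$-axis, $\Psi$ has no cancellation: the frequency cubes satisfy $\omega_1=\omega_2$ with no achievable separation of the form $\omega_1=\omega_2+C|\omega_2|$, trees can be overlapping in $\xi$, and the \emph{only} source of orthogonality is that $\pi^{(3)}_{\omega_3}$ localizes $F_3$ in the $y$-frequency to $[|\omega_3|,2|\omega_3|]$. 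So there is no $(x,\xi)$-lacunarity for the classical BHT tree estimate to exploit; the single-tree bound requires the paraproduct estimate (Proposition \ref{prop:paraproduct11286}) together with the one-and-a-half dimensional phase-space projections of Proposition \ref{phase-space}, including the fiberwise mean-zero corrections $\phi^l_{R,j},\phi^r_{R,j}$ for overlapping trees, and the tree counting requires the convex/greedy selection and the Bessel inequality of Proposition \ref{selection}, proved by slicing into one-dimensional forests and invoking the logarithmic-loss results of \cite{DTT}. Relatedly, your tile structure assigns $\eta$-frequency intervals to the $F_1,F_2$ wave packets while putting $F_3$ near zero $y$-frequency; this contradicts your own displayed reduction (which, like the paper, projects $F_3$ in $y$ and leaves $F_1,F_2$ undecomposed in the second variable) and is precisely the decomposition the paper warns against: decomposing the pointwise $y$-product of $F_1$ and $F_2$ into wave packets and inserting absolute values makes the model operator unbounded on all $L^p$ spaces. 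Without the cone reduction, the overlapping-tree projection machinery, and the decision not to decompose $F_1,F_2$ in $y$, the proposed outline does not yield the theorem.
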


We remark that we find likely that a more refined analysis
\footnote{In particular, one would have to eliminate some appropriate exceptional sets} 
can push the range of validity of Theorem \ref{Mainthmforsemidegform} to all $1<p_i<\infty$ satisfying $\frac{1}{p_1}+\frac{1}{p_2}+\frac{1}{p_3}=1$. We will not pursue this here.

A simple but important observation shows that
$$\Lambda(F_1,F_2,F_3)=\int_{\R^4} F_1(x+t,y)F_2(x-t,y)F_3(x,y-s)K(t,s)dxdydtds.$$
This formulation of $\Lambda(F_1,F_2,F_3)$ anticipates one of the main features of our approach: we will not do any frequency decomposition for $F_1$ or $F_2$ in the second variable. Indeed, it is not too hard to see that a full two dimensional approach as in the non-degenerate case (see Section \ref{sec:nondeg}), that would amount to a full two dimensional decomposition of all $F_i$, followed by  inserting absolute values on  pieces of the operator associated with each multi-tile, will make the model operator unbounded on all $L^p$ spaces. We omit these details, but we mention that the ``enemy'' here is the fact that the form contains a pointwise product on the $y$ variable of $F_1$ and $F_2$. This pointwise product will not be decomposed any further, but will rather be ignored until the later part of the argument.

\begin{definition}
Let $m:\R^d\to \R$ and let $D\subset \R^d$ be a $d-$ dimensional cube with sidelength $L$. We will say that $m$ is adapted to $D$ of order $M$ if $m$ is supported in $D$ and 
\be{adaptedstrictway}
\|\partial^{\alpha}m\|_{L^{\infty}}\le L^{-|\alpha|}
\end{equation}
for each $\alpha\in \N^d$ with $|\alpha|\le M.$  
\end{definition}

We will often refer to various $m$ as adapted to a certain interval in a more general sense, that is with the understanding that there is an extra (implicit) constant on the right hand side of  \eqref{adaptedstrictway}. This implicit constant will not be stated, but it will always be bounded by a universal constant (i.e. $O(1)$).

In order to discretize $\Lambda(F_1,F_2,F_3)$, we first perform a "cone decomposition" of $K$, that is we decompose smoothly $\widehat{K}$ into pieces localized in (finitely many) cones\footnote{These cones will typically have the same aperture, much smaller than $\pi/2$} centered at the origin (see for example \cite{MTT11}). This decomposition reduces Theorem \ref{Mainthmforsemidegform} to  getting bounds for 
\be{conedecompuuh}
\sum_{k\in\Z}\int_{R^4} F_1(x+t,y)F_2(x-t,y)F_3(x,y-s)\Psi_k(t)\Phi_k(s)dxdydtds,\end{equation}
where $\Psi_k(t)=\frac{1}{2^k}\Psi(t/2^k)$ and $\Phi_k=\frac{1}{2^k}\Phi(s/2^k)$, and $\Psi$ and $\Phi$ are functions whose Fourier transforms are adapted to $[-1/2,1/2]$ of some large order. Moreover, we can assume at least one of $\widehat{\Psi}$ and $\widehat{\Phi}$ is supported away from $0$ and thus $\Psi$ or $\Phi$ has mean zero. This latter condition reflects the fact that the cone to which $\widehat{K}$ is restricted can not intersect both  punctured (frequency) axes. 
To pass from any smooth cone decomposition to cones multipliers that are sume of tensor products
as in (\ref{conedecompuuh}) one can use the standard method of Fourier expansion of pieces
of the cone multiplier.

The bulk of the paper (Section \ref{sub:1}) is devoted to the analysis of the case where $\widehat{K}$ is restricted to a cone that does not intersect the punctured $\eta$ axis\footnote{$\eta$ is the dual of the $y$ variable}. The analysis in the case when the cone does not intersect the punctured $\xi$ axis is somewhat easier\footnote{We will also make the point that similar techniques to the ones we develop to address the first type of cone also apply to the second type of cone}  (at least for exposition purposes), and will be presented in Section \ref{sub:2}.

\subsection{The cone $\int\Phi=0$}\ \
\label{sub:1}

We will thus focus on the case $\int\Phi=0$.

\subsubsection{Discretization}

By using standard reductions, in order to get bounds for \eqref{conedecompuuh} it suffices to prove the boundedness of the model sum
\begin{equation}
\label{model005}
\int_{\R^2}\sum_{Q=\omega_{1}\times\omega_{2}\times {\omega}_{3}\in {\bf Q}}\prod_{i=1}^3\pi_{\omega_i}^{(i)}F_i(x,y)dxdy
\end{equation}
where for $i\in\{1,2\}$, $\pi_{\omega}^{(i)}$ denotes some projection operator (acting on the $x$ variable) associated with a multiplier\footnote{That is $\pi_{\omega}^{(i)}F(x,y)=\int_{\R^2}m_{\omega}(\xi)\widehat{F}(\xi,\eta)e^{i(x\xi+y\eta)}d\xi d\eta$} $m_{\omega}$ adapted to $\omega$ of order $N^4$,
while $\pi_{\omega}^{(3)}$ is the tensor product of a projection as above in the first coordinate and a projection as above on $[|\omega|,2|\omega|]$, in the second coordinate.

Here ${\bf Q}$ is a collection of frequency cubes $Q=\omega_{1}\times\omega_{2}\times {\omega}_{3}$ satisfying the following properties:

\begin{definition}\ \
\label{deffreqint}
\begin{itemize}
\item ${\bf Q}$  is a one parameter family, in that each component $\omega_i$ determines uniquely the other two components of a given $Q$.
\item Each $\omega_i$ is an interval in a fixed shifted dyadic grid\footnote{That is the collection of intervals of the form 
$$\G_{M,j,L}:=\left\{\left[2^{i}\left(l+\frac{L}{M}\right),2^{i}\left(l+\frac{L}{M}+1\right)\right]\;:i\equiv j\pmod {M-1},\;l\in\Z\right\},$$ with $M\ge 3$ an odd integer, $0\le j\le M-2$ and $0\le L\le M-1$} $\D_0$
\item For each $Q$, $|\omega_i|=2^{Jj}$ for some $j\in\Z$, where $J\in \N$ is a fixed large enough  natural number. Such intervals will be referred to as $J$- dyadic.
\item $|\omega_i|=|\omega_i'|$ and $\omega_i\not=\omega_i'$ implies $\dist(\omega_i,\omega_i')\ge 2^{J}|\omega_i|$ 
\item $\omega_1=\omega_2$.
\item There is a (possibly different) shifted dyadic grid $\D_1$ such that for each $\omega_i$, $i\in\{1,2\}$ there exists\footnote{The enlarged intervals $\bar{\omega}_i$ are a technicality needed for the construction of phase space projections for overlapping trees. They are only needed for $i\in\{1,2\}$} $\bar{\omega_i}\in\D_1$ such that $3000\omega_i\subseteq \bar{\omega}_i\subseteq 4000\omega_i$. 
\item $-2\xi\in C_0\omega_3$ whenever $\xi\in\bar\omega_1$, where $C_0$ is some large enough universal constant\footnote{This is achievable since $Q$ is ``close'' to the plane $\xi_1+\xi_2+\xi_3=0$. See for example  \cite{DTT} for details. The precise positioning of each $\omega_3$ with respect to $\omega_1$ is unimportant for our considerations, since it will not affect any type of orthogonality in our argument}.
\end{itemize}
\end{definition}

The properties above are easily achieved by stretching the intervals $\omega_i$ as needed, by an $O(1)$ factor, and by embedding them into intervals (of similar size) of a shifted dyadic grid. The procedure is completely standard, we refer the reader to \cite{DLTT} (see for example section 6) and \cite{DTT} for details. The sparsification induced by the constant $J$ implies that we have to deal with roughly $O(J)$ model sums like that in \eqref{model005}. This is however no problem, since $J=O(1)$.

We anticipate a bit the proof of the boundedness of \eqref{model005}, and mention that the only source of orthogonality will be the fact that $\pi_{\omega_3}^{(3)}$ projects in the second coordinate on intervals of the form $[|\omega_3|,2|\omega_3|]$, which are pairwise disjoint for distinct scales of $\omega_3$. The requirement $\omega_1=\omega_2$ will not generate orthogonality, and in general, we can not do better than that, that is, we can not achieve a separation condition\footnote{This kind of separation condition is achievable in the case of the one dimensional Bilinear Hilbert Transform, and is the main source of orthogonality in that instance} like $\omega_1=C|\omega_2|+\omega_2$. This can be easily seen in the case when $\Psi$  does not have mean zero\footnote{However, if both $\Psi$ and $\Phi$ have mean zero, that is, if the cone does not touch either punctured frequency axis, then this extra separation can be achieved, and the argument gets significantly simpler} (worst case scenario).

We will further discretize \eqref{model005} this time on the spatial side, and for this we introduce some notation.

Let $\eta$ denote a fixed positive function with integral 1 and with Fourier transform supported in $[-2^{-2J}, 2^{2J}]$, satisfying the pointwise estimates 
\begin{equation}\label{eta-bounds}
 C^{-1} (1 + |x|)^{-N^2} \leq \eta(x) \leq C (1 + |x|)^{-N^2},
\end{equation}
for some large enough $C$ that may depend on $N$.

Let $\eta_j$ denote the function $\eta_j(x) := 2^{jJ}\eta(2^{jJ}x)$.  For any subset $E$ of $\R$ or $\R^2$, denote by $\chi_E$ the characteristic function of $E$. If $E\subseteq \R$ we define the smoothed out characteristic function $\chi_{E,j}$ by
$$ \chi_{E,j} := \chi_E * \eta_{j}.$$
For a square $R=I\times J$ we will also use the notation
$$\chi_{R,j}(x,y)=\chi_{I,j}(x)\chi_{J}(y)$$
Note that we smoothen out only in the first coordinate. Note also that 
\be{chi-split}
\chi_{\cup_{\alpha \in A} E_\alpha,j} = \sum_{\alpha \in A} \chi_{E_\alpha,j}.
\end{equation}
whenever $E_\alpha$ are disjoint. 

Note that $\chi_{E,j}$ is a frequency-localized approximation to $\chi_E$.  In fact we have the pointwise estimate
\be{e-diff}
|\chi_{E,j}(x) - \chi_E(x)| \le C (1 + 2^{jJ} \dist(x, \partial E))^{-N^2+1},
\end{equation}
where $\partial E$ is the topological boundary of $E$.

\begin{definition}
A multi-tile $P=R_P\times {Q}_P$ is identified by its spatial component, a $J$- dyadic square $R_P=I_P\times J_P$ from the standard dyadic grid\footnote{Spatial intervals which are referred to as dyadic are always understood to be in the standard dyadic grid}, and by its frequency component, the $J$- dyadic cube ${Q}_P=\omega_{P_1}\times\omega_{P_2}\times {\omega}_{P_3}\in {\bf Q}$, satisfying the property that $|I_P||\omega_{P_1}|=1$. For each such $P$, we denote by $j_P$ the integer such that $|I_P|=2^{-j_PJ}$. We will actually abuse notation and  for each $J$ dyadic square $R$ will denote by $j_R$ the integer such that $R$ has sidelength $2^{-jJ}$. The collection of all multi-tiles is denoted with $\P$.

We will sometimes abuse notation and denote $\omega_{P_1}=\omega_{P_2}$ by $\omega_P$, while the enlarged intervals $\bar{\omega}_{P_1}=\bar{\omega}_{P_2}$ from Definition \ref{deffreqint} by $\bar{\omega}_P$.

If $P$ is a multi-tile, its restrictions $R_P\times\omega_P$ and $R_P\times\bar\omega_P$ will sometimes be referred to as tiles.
\end{definition}

From \eqref{chi-split} we have that 
$$\sum_{Q\in {\bf Q}}\int\int \prod_{i=1}^3\pi_{\omega_i}^{(i)}F_i(x,y)dxdy=\sum_{P\in \P}\int\int \chi_{R_P,j_P}(x,y)\prod_{i=1}^3\pi_{\omega_i}^{(i)}F_i(x,y)dxdy.$$

By incorporating all the reductions made in this section, and by invoking a limiting argument, Theorem \ref{Mainthmforsemidegform} will follow if we prove the following
\begin{theorem}
\label{main2}
Let $\P$ be an arbitrary finite collection of multi-tiles\footnote{We will abuse notation here and use the same letter $\P$ for subcollections}. Then for each $2<p_i<\infty$ we have
\be{mainineq1444}
|\sum_{P\in \P}\int\int \chi_{R_P,j_P}(x,y)\prod_{i=1}^3\pi_{\omega_i}^{(i)}F_i(x,y)dxdy|\lesssim \prod_{i=1}^{3}\|F_i\|_{L^{p_i}}.
\end{equation}
Moreover, the implicit constant only depends on $p_i$
\end{theorem}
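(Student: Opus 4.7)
The plan is to follow the size/energy framework of one-dimensional time-frequency analysis from \cite{LT1,LT2,LT3}, adapted to the one-and-a-half-dimensional phase-space geometry introduced in the excerpt: the frequency variables $\omega_{P_1}=\omega_{P_2}=\omega_P$ together with the derived $\omega_{P_3}$ move along a one-parameter family, while the spatial supports $R_P=I_P\times J_P$ are genuine two-dimensional squares. Because the target is the local-$L^2$ Banach triangle $2<p_i<\infty$, $L^2$-based orthogonality in the variable where it is available will be enough, and one can avoid restricted weak type subtleties.

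First I would fix three sizes and energies, one per function. For $F_1$ and $F_2$, which are projected only in the $x$-variable onto the common interval $\omega_P$, the natural $\size_i(T)$ for a tree $T$ with top $P_T$ is an $L^2$-normalized integral on $R_T$ of $\pi^{(i)}_{\omega_T}F_i$, fibered over the $y$-variable; since no orthogonality is available for these indices, the corresponding energy is a $\BMO$/John--Nirenberg type quantity. For $F_3$, whose projection $\pi^{(3)}_{\omega_3}$ acts also in the $y$-coordinate onto the dyadic band $[|\omega_3|,2|\omega_3|]$, $\size_3(T)$ is the genuine one-and-a-half-dimensional phase-plane quantity and its energy is bona fide $\ell^2$: this is precisely where the disjointness of the $y$-frequency bands across scales, singled out in the text, is put to work.

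The main structural step is a Lacey--Thiele tree-selection algorithm extracting maximal trees of geometrically decreasing size, followed by a single-tree estimate of the form
\bas
\Bigl|\sum_{P\in T}\int\int\chi_{R_P,j_P}(x,y)\prod_{i=1}^{3}\pi^{(i)}_{\omega_{P_i}}F_i(x,y)\,dxdy\Bigr|\lesssim |R_T|\prod_{i=1}^{3}\size_i(T).
\eas
I would prove this single-tree bound by splitting $T$ into its lacunary piece (where $\omega_{P_3}$ is well separated from $\omega_{T_3}$) and its overlapping piece ($\omega_{P_3}\subsetneq\omega_{T_3}$). On the lacunary piece, the disjointness of the $y$-frequency bands of $F_3$ gives an $L^2$ Bessel inequality in $y$, which paired against a pointwise/TT${}^*$ bound for the product $\pi^{(1)}_{\omega_P}F_1\cdot\pi^{(2)}_{\omega_P}F_2$ (no $y$-decomposition required) yields the claim with the orthogonality factor coming from $\size_3(T)$. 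On the overlapping piece, I would freeze a $y$-slice and treat the resulting one-dimensional trilinear object as a one-dimensional bilinear Hilbert-like form in the spirit of \cite{LT1,LT2}, then integrate the uniform-in-$y$ estimate against the $y$-profile of $F_3$.

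The main obstacle is precisely the overlapping piece: the paper explicitly forbids any frequency decomposition of $F_1,F_2$ in $y$, so the pointwise product $F_1(x+t,y)F_2(x-t,y)$ must be processed as a single object per $y$-slice, and the one-dimensional BHT technology must be imported with estimates that are uniform and integrable in $y$. Once the single-tree estimate is in hand, the final step is the standard summation: $\BMO$/John--Nirenberg control for the sizes of $F_1,F_2$, the $\ell^2$ energy bound for $F_3$, and an optimization over size levels (as in \cite{LT1,LT2,MTT11,DLTT,DTT}) convert the tree estimates into \eqref{mainineq1444} in the range $2<p_i<\infty$; the restriction $p_i>2$ is exactly what makes the $\ell^2$-valued orthogonality on $F_3$ the operative inequality, while leaving the non-orthogonal sup-type control on $F_1,F_2$ affordable.
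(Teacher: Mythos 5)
There is a genuine gap, and it sits at the heart of the matter: you assert that for $F_1,F_2$ ``no orthogonality is available'' and that non-orthogonal, sup-type ($\BMO$/John--Nirenberg) control of their sizes, combined with the $\ell^2$ Bessel bound for $F_3$ alone, suffices for the final summation. It does not. After the reduction to characteristic functions, if the only counting-function estimate comes from $F_3$ (trees of $3$-size about $2^{-m}$ satisfy $\sum_\T|R_\T|\lesssim 2^{2m}|E_3|$) while $\size_1,\size_2$ are merely bounded by $O(1)$, the single-tree bound gives
$$\sum_{m\ge 0}\sum_{\T}|R_\T|\,\size_1(\T)\size_2(\T)\size_3(\T)\lesssim |E_3|\sum_{m\ge 0}2^{2m}\cdot 2^{-m},$$
which diverges; no choice of $p_i>2$ rescues this, because the divergence is in the $m$-summation, not in the exponents. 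One needs the sizes of $F_1$ and $F_2$ to decrease geometrically through the selection, and hence a Bessel-type counting inequality for trees selected because $\size_1$ or $\size_2$ is large. This is exactly the content of Proposition \ref{selection} for $i\in\{1,2\}$ (with the $\lambda^{-2-\epsilon}$ loss), and its proof is the paper's main technical work: a greedy, convex selection of upper/lower lacunary and overlapping trees with extremal $\xi_\T$, a fiberwise (in $y$) reduction to strongly disjoint one-dimensional forests, the almost-orthogonality of the projections $\tilde\chi_{I_P}^{10}T_{m_P}$ with a logarithmic loss in the counting function (Proposition \ref{nmprop}), and Lemma \ref{cor:7} to convert that loss into an $\epsilon$-power. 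The ``only source of orthogonality'' remark in the paper refers to frequency separation inside a multi-tile (indeed $\omega_1=\omega_2$ gives none), not to the absence of a Bessel inequality for $F_1,F_2$; the latter is recovered from spatial strong disjointness of the selected trees, and it is also why the single-tree estimate carries exponents $\theta_i$ and the final argument interpolates with $\theta_i=(2+\epsilon)/p_i$.

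A secondary but real issue is your single-tree estimate. Within one tree the frequencies line up along the top, so freezing a $y$-slice produces a paraproduct-type object, not a bilinear-Hilbert-type one, and ``uniform in $y$'' control is not available from sizes that are $L^2$-averages over $R_\T$ fibered in $y$. The paper instead builds one-and-a-half dimensional phase-space projections (Proposition \ref{phase-space}), whose overlapping case requires the fiberwise mean-zero corrections $\phi^l_{R,j},\phi^r_{R,j}$, and then invokes a genuinely two-dimensional paraproduct estimate (Proposition \ref{prop:paraproduct11286}) involving square functions and maximal truncations of two-dimensional singular integrals, to obtain Proposition \ref{mainsingletreeestimate5}. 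Your outline would need both of these ingredients, and above all the $i\in\{1,2\}$ Bessel inequality, before the ``standard summation'' can close.
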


The proof of this Theorem is postponed until Section \ref{sec:34}. There are two important ingredients that lie behind this proof: a localized estimate (Proposition \ref{mainsingletreeestimate5}) and a Bessel type inequality (Proposition \ref{selection}).

%%%%%%%%%%%%%%%%%%%%%%%%%%%%%%%%%%%

\subsubsection{Tree selection and sizes}

In this section we organize $\P$ into structured collections called trees.
\begin{definition}
\label{deftreesg66tgsjhu}
Let $\xi\in\R$ and let $R$ be a $J$- dyadic square. We define
$$\omega_{\xi,R}:=[\xi-\frac122^{j_RJ},\xi+\frac122^{j_RJ}]$$
and
$$\bar{\omega}_{\xi,R}:=[\xi-500\times 2^{j_RJ},\xi+500\times 2^{j_RJ}].$$

A tree $(\T,\xi_\T,R_\T)$ is a nonempty collection $\T\subset\P$ of multi-tiles such that for each $P\in\T$ we have $R_P\subseteq R_\T$ and $\bar{\omega}_{\xi_\T,R_\T}\subseteq \bar{\omega}_{P}$. The pair $(\xi_\T,R_\T)$ will be referred to as the top data of the tree. We will write $\omega_\T $ and $\bar{\omega}_\T$ for $\omega_{\xi_\T,R_\T}$ and $\bar{\omega}_{\xi_\T,R_\T}$.

The tree will be referred to as $\textit{lacunary}$ if $\xi_\T\notin 2\omega_P$ for each $P\in\T$ and $non-lacunary$ (sometimes also referred to as $overlapping$) if $\xi_\T\in 2\omega_P$ for each $P\in\T$. 
\end{definition}

\begin{remark}
\label{bothlacandoversingle}
Each multi-tile $P$ gives rise both to an overlapping tree $(\{P\},c(\omega_P)\footnote{Here and in the following, $c(\omega)$ will denote the center of the interval $\omega$},R_P)$ but also to a lacunary tree $(\{P\},\xi, R_P)$, where $\xi$ can be any point in $100\omega_P\setminus 2\omega_P$.
\end{remark}
\begin{remark}
\label{moreinfoforoverlap}
If the tree $(\T,\xi_\T,R_\T)$ is overlapping, we will actually have better localization in terms of the top frequency
$$2\bar{\omega}_\T\subset \bar{\omega}_P$$
for each $P\in\T$. This is a consequence of Definition \ref{deffreqint}.
\end{remark}
\begin{remark}
\label{3lacanyways}
Note that each tree $\T$ can be decomposed as the union of one lacunary tree $\T_l$ and one non-lacunary tree $\T_o$ each of which has the same top data as the original tree. The distinction whether a given tree is lacunary or not will only be made with respect to the first two components (recall that $\omega_P=\omega_{P_1}=\omega_{P_2}$). With respect to the third component, a tree will always have good orthogonality behavior, and can be automatically thought of as 3-lacunary, since
 
$$2\xi_\T\times 0\in C_0(\omega_{P_3}\times [|\omega_{P_3}|,2|\omega_{P_3}|])\setminus 2(\omega_{P_3}\times [|\omega_{P_3}|,2|\omega_{P_3}|]).$$ 
\end{remark}
Let $\J_\T:=\{j_P:\;P\in\T\}$. We will also denote by $j_\T:=j_{R_\T}$.

For each $j\in\J_\T$ we denote by 
$$E_{j,\T}:=\bigcup_{P\in\T:j_P=j}R_P.$$

We remark that due to Definition \ref{deffreqint}, for each $j\in\J_\T$ there is exactly one $Q$ with sidelength $2^{jJ}$ such that $Q$ is the frequency component of a multi-tile $P\in\T$.

For each such $j\in\J_\T$ we define  the spatial cutoffs $\tilde \chi_{j}$, $\tilde{\tilde\chi_{j}}$ and the Fourier cutoff $\tilde \pi_j$ as
\begin{equation}
\label{tchij-def}
\tilde \chi_{j} := \chi_{E_{j,\T},j} = \sum_{P \in \T: j_P=j} \chi_{R_P,j}
\end{equation}
\begin{equation}
\label{hhh}
\tilde{\tilde{\chi_{j}}}:= \sum_{P \in \T: j_P=j} \chi_{I_P,j}(x)\times\chi_{J_P,j}(y)
\end{equation}
and
$$ \tilde \pi_j := \pi_{\omega_i}^{(i)},$$
where $\omega_i$ is the $i^{th}$ component of the unique $\Omega\in\T$ with $|\omega_i|=2^{jJ}$.  

We remark that our notation is sloppy here, the operator $\tilde{\pi}_j$ also
depends on the parameter $i$. We will always write $\tilde{\pi}_j$
in combination with a function $F_i$, and the omitted index is
always the one of the function $F_i$.

\begin{definition}
A tree selection process consists of choosing a tree $\T_1$
from $\P$, then choosing a tree $\T_2$ from $\P\setminus \T_1$
and so on. I.e., at the $k$-th step we choose a tree $\T_k$ from
$\P\setminus (\T_1\cup\dots\cup \T_{k-1})$. We shall refer to the
trees $\T_k$ as the selected trees.
\end{definition}

\begin{definition}\label{max-tree-def}
Consider a subset $\P_0$ of $\P$ and some top data $(\xi,R)$. 
Then the maximal tree $\T^*$ in $\P_0$ with top data $(\xi,R)$
is the set of all $P\in \P_0$ such that $\bar{\omega}_{\xi,R}\subseteq \bar{\omega}_P$ and $R_P\subseteq R$.

A tree selection process is called greedy, if at the $k$-th step
the tree $\T_k$ is maximal in $\P\setminus (\T_1\cup\dots\cup \T_{k-1})$.
\end{definition}

The fact that trees are selected by a greedy selection algorithm will imply regularity, as expressed by Lemma \ref{count}. This in turn will be used repeatedly in the estimates for the phase-space projections in Proposition \ref{phase-space}, in particular they will ensure that various contributions coming from different scales are summable.

We will use the notation 
$$\tilde{\chi}_{I}(x)=(1+\frac{|x-c(I)|}{|I|})^{-1}$$
and 
$$\tilde{\chi}_{R}(x,y)=\tilde{\chi}_{I_R}(x)\tilde{\chi}_{J_R}(y).$$

\begin{definition}
Let $F_i$ be an $L^2$ function    
and let $(\T,\xi_\T,R_\T)$ be a tree.

We first address the case $i\in\{1,2\}$. For each $P\in\T$ we introduce the following notation
$$\|F_i\|_{P}:= \sup_{m_P}\|\tilde{\chi}_{R_P}^{10}(x,y)T_{m_P}(F_i(\cdot,y))(x)\|_{L^2_{x,y}},$$
where  $m_P$ ranges over all functions adapted to $\omega_P$ of order $N^2$.

If the tree is lacunary then we define its $i$-size $\size_i(\T)$ by
$$\size_i(\T):=\left(\frac{1}{|R_\T|}\sum_{P\in\T}\|F_i\|_{P}^2\right)^{1/2}.$$

If the tree is overlapping then we define its $i$-size $\size_i(\T)$ by
$$\size_i(\T):=\sup_{m_\T}\frac{1}{|R_\T|^{1/2}}\|\tilde{\chi}_{R_\T}^{10}(x,y)T_{m_\T}(F_i(\cdot,y))(x)\|_{L^2_{x,y}},$$
where  $m_\T$ ranges over all functions adapted to $10\omega_\T$ of order $N^2$ which also vanish at some point $v_\T\in 10\omega_\T$.

For each $P\in\T$ we also introduce the following notation
$$\|F_3\|_{P,3}:= \sup_{m_P}\|\tilde{\chi}_{R_P}^{10}(x,y)T_{m_P}(F_3(x,y))\|_{L^2_{x,y}},$$
where  $m_P$ ranges over all functions adapted to $\omega_{P_3}\times [|\omega_{P_3}|, 2|\omega_{P_3}|]$ of order $N^2$. 

We now define the 3-size $\size_3(\T)$ by
$$\size_3(\T):=\left(\frac{1}{|R_\T|}\sum_{P\in\T}\|F_3\|_{P,3}^2\right)^{1/2}.$$
\end{definition}

It turns out that controlling the model sum associated with one tree requires a slightly stronger notion of size.

\begin{definition}
Let $\P_0\subseteq \P$ be a finite collection of multi-tiles. 

If $i\in\{1,2\}$ then we define its maximal overlapping $i$-size by
$$\size_i^{o}(\P_0):=\sup_{(\T,\xi_{\T},R_{\T})\atop{\T\subseteq\P_0}}\size_i(\T),$$
where $(\T,\xi_\T,R_\T)$ runs over all overlapping trees with $\T\subseteq\P_0$. Similarly define the  maximal lacunary  $i$-size $\size_i^{l}(\P_0)$ by restricting the supremum to lacunary trees. The maximal $i$-size $\size^{*}_i(\P_0)$ of $\P_0$  is taken to be the largest of $\size_i^{o}(\P_0)$ and $\size_i^{l}(\P_0)$.

Finally, define the maximal $3$-size of $\P_0$ by
$$\size_3^{*}(\P_0):=\sup_{(\T,\xi_{\T},R_{\T})\atop{\T\subseteq\P_0}}\size_3(\T),$$
where $(\T,\xi_{\T},R_{\T})$ runs over all trees (lacunary or overlapping) with $\T\subseteq\P_0$.
\end{definition}

\begin{remark}The size depends on the input function $F_i$, however, to simplify notation we will ignore this dependence. It will always be clear from the context what function is associated with a given size.
\end{remark}

\begin{remark}
Note that the overlapping size controls phase-space projections onto 3 dimensional boxes,  which might in principle be much thinner than a tile. This component of the maximal size is merely a technicality needed to control the norm of the phase-space projection onto an overlapping tree. It will come into the picture through estimate \eqref{unsigned-est-sup}.
\end{remark}
The way we will prove Theorem \ref{main2} is by first proving the following local estimate. 

\begin{proposition}
\label{mainsingletreeestimate5}
Let $(\T,\xi_\T,R_\T)$ be  a tree selected by a greedy algorithm. Let $F_i$ be test functions on $\R^2$ satisfying
\begin{equation}
\label{supbound}
\|F_i\|_{L^{\infty}}\le 1
\end{equation}  
and let $0\le\theta_1,\theta_2<1$ and $0\le \theta_3\le 1.$
Then we have
\begin{equation}
\label{supbound1}
|\sum_{P\in\T}\int_{\R^2}\chi_{R_P,j_P}\prod_{i=1}^3\pi_{\omega_{P_i}}^{(i)}F_i|\lesssim_{\theta_i}|R_\T|\prod_{i=1}^{3}\size_i^{*}(\T)^{\theta_i}.
\end{equation}
\end{proposition}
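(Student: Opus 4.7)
The plan is to split $\T=\T_l\cup\T_o$ via Remark \ref{3lacanyways}, recast the tile sum in scale-localized form, and then obtain \eqref{supbound1} by multilinear interpolation between a trivial $L^\infty$ bound (the corner $\theta_1=\theta_2=\theta_3=0$) and three size-based endpoint estimates, each obtained by holding one factor in $L^\infty$ and controlling the other two by weighted square functions in the scale parameter $j$. Since ${\bf Q}$ is a one-parameter family, all multi-tiles $P\in\T$ with a common scale $j_P=j$ share a common frequency cube, so the projections $\pi^{(i)}_{\omega_{P_i}}$ collapse to $\tilde\pi_j$; together with $\sum_{P:j_P=j}\chi_{R_P,j}=\tilde\chi_j$, this rewrites the left side of \eqref{supbound1} as
$$\sum_{j\in\J_\T}\int\tilde\chi_j\prod_{i=1}^3\tilde\pi_j F_i.$$
The trivial $(0,0,0)$ corner then follows from $\|\tilde\pi_j F_i\|_\infty\lesssim 1$ together with the greedy regularity $\sum_j\int\tilde\chi_j\lesssim |R_\T|$ provided by Lemma \ref{count}.

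For each $i_0\in\{1,2,3\}$ I obtain the endpoint with $\theta_{i_0}=0$ and the other two $\theta$'s pushed to $1$ (or, for $i_0\in\{1,2\}$, to any value strictly less than $1$) via two applications of Cauchy-Schwarz: pointwise in $j$ to split the two non-$i_0$ factors, and then in the integral to isolate the weighted square functions
$$\Bigl\|\Bigl(\sum_{j\in\J_\T}\tilde\chi_j|\tilde\pi_j F_i|^2\Bigr)^{1/2}\Bigr\|_{L^2}\lesssim |R_\T|^{1/2}\size_i^*(\T),$$
which is exactly the output delivered by the phase-space projection machinery behind Proposition \ref{phase-space}. The bound for $i=3$ is essentially free: the frequency windows $\omega_{P_3}\times[|\omega_{P_3}|,2|\omega_{P_3}|]$ are disjoint across scales, so $\tilde\pi_j^{(3)}$ is nearly orthogonal as $j$ varies, and the square function bound follows directly from the definition of $\size_3$ and Lemma \ref{count}. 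For $i\in\{1,2\}$ on the lacunary piece $\T_l$, the condition $\xi_\T\notin 2\omega_P$ makes $\{\omega_P\}_{P\in\T_l}$ lacunary about $\xi_\T$, so a Bessel inequality in the scale variable reduces the square function to $\sum_{P\in\T_l}\|F_i\|_P^2\lesssim |R_\T|\size_i^l(\T)^2$, which is exactly the definition of the lacunary size.

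The main obstacle is the overlapping case for $i\in\{1,2\}$: all $\omega_P$ now contain $\xi_\T$, so there is no internal orthogonality, and the whole multiscale family must be dominated by a single phase-space projection attached to the top. This is precisely what the one-and-a-half dimensional projections, the enlarged intervals $\bar\omega$, and the clause requiring $m_\T$ to vanish at some $v_\T\in 10\omega_\T$ in the overlapping size are tailored for: the vanishing condition controls the consecutive differences $\tilde\pi_j F_i-\tilde\pi_{j-1}F_i$ by a single multiplier adapted to $10\omega_\T$, and after telescoping and summing with the greedy regularity of Lemma \ref{count} the square function is absorbed into $|R_\T|^{1/2}\size_i^o(\T)$, at the cost of a constant that diverges as the exponent on $\size_i^o$ approaches $1$. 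Assembling these corner estimates with the trivial one and multilinearly interpolating produces every $(\theta_1,\theta_2,\theta_3)\in[0,1)\times[0,1)\times[0,1]$; the strict inequality on $\theta_1,\theta_2$ records this divergence at the endpoint, whereas $\theta_3=1$ is genuinely attained due to the intrinsic orthogonality in the third coordinate.
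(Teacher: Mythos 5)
There is a genuine gap, and it occurs at both places where your interpolation scheme needs an endpoint. First, the ``trivial corner'' $\theta_1=\theta_2=\theta_3=0$ is not trivial: Lemma \ref{count} controls boundary sums $\sum_j 2^{-jJ}\#\partial E_{j,\T,x}$, not the measures, and the sets $E_{j,\T}$ are \emph{nested} across scales (they can all equal $R_\T$ for a full tree), so $\sum_{j\in\J_\T}\int\tilde\chi_j$ can be as large as $(\#\J_\T)\,|R_\T|$. Hence the bound $|\sum_P\int\chi_{R_P,j_P}\prod_i\pi^{(i)}_{\omega_{P_i}}F_i|\lesssim|R_\T|$, uniform in the number of scales, cannot be obtained by putting every factor in $L^\infty$; it requires cancellation among scales. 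This is exactly what the paraproduct estimate (Proposition \ref{prop:paraproduct11286}) supplies in the paper, and your proposal never invokes it. Second, the endpoint square--function estimate $\|(\sum_j\tilde\chi_j|\tilde\pi_jF_i|^2)^{1/2}\|_2\lesssim|R_\T|^{1/2}\size_i^*(\T)$ is false for $i\in\{1,2\}$ when the tree is overlapping: if $F_i$ has $x$--frequency support in a small neighborhood of $\xi_\T$ then $\tilde\pi_jF_i\approx F_i$ for \emph{every} $j$, and on a full tree the left side grows like $(\#\J_\T)^{1/2}$ while the overlapping size stays bounded. Your telescoping remark does not repair this, because the overlapping size (multipliers vanishing at some $v_\T\in10\omega_\T$) only controls the difference/high parts; the untelescoped low--frequency part is as large as the full projection at each scale, and its summability must come from the cancellation of the trilinear form, not from a quadratic estimate on one function. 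Since the corner $(\theta_1,\theta_2)\to(1,1)$ is genuinely unattainable, taking geometric means of your corner bounds cannot produce the claimed range, and the mechanism that actually yields $\size_i^{\theta_i}$ with $\theta_i<1$ is $L^2$--$L^\infty$ interpolation against \eqref{supbound} (i.e.\ $\theta_i=2/p_i$ via H\"older with $p_i>2$), not a divergent-constant limit.

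For comparison, the paper's route is structurally different: it first reduces \eqref{supbound1} to the smoothed sum \eqref{supbound177654} (triangle inequality, \eqref{e-diff}, Lemma \ref{count}, Lemma \ref{triv-bounds}), then replaces each $F_i$ by a single phase-space projection $\Pi_i(F_i)$ attached to the whole tree, using the approximation estimates \eqref{lacunary13499}, \eqref{lacunary13499ttgdt} and, in the delicate overlapping case, \eqref{error-1} with the weights $\mu_{j,p}$ (this is where the mean-zero corrections $\phi^{l,r}_{R,j}$ and estimate \eqref{unsigned-est-sup} do the work you tried to do with a square function). The main term is then bounded by combining \eqref{fire-1} and \eqref{lacunary13497} with the paraproduct estimate of Proposition \ref{prop:paraproduct11286}, the sizes entering only through $\|\Pi_i(F_i)\|_{p_i}\lesssim|R_\T|^{1/p_i}\size_i^*(\T)^{\theta_i}$. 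If you want to salvage your outline, you would need to (i) replace the ``trivial corner'' by the paraproduct cancellation argument, and (ii) abandon the scale-wise Cauchy--Schwarz for overlapping trees in favor of the projection $\Pi_i$ construction of Proposition \ref{phase-space}.
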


We postpone the proof of Proposition \ref{mainsingletreeestimate5} to Section \ref{sec:22}.

%%%%%%%%%%%%%%%%%%%%%%%%%%%%%%%%%
\subsubsection{The paraproduct estimate}
In this section we prove the following global version of Proposition \ref{mainsingletreeestimate5}.

\begin{proposition}
\label{prop:paraproduct11286}
Let $\xi\in\R$ and let $\Q'\subset\Q$ be a finite collection of frequency cubes $Q=\omega_{1}\times\omega_{2}\times\omega_{3}:=\omega\times\omega\times\omega_{3}$ with the property that $\xi\in\bar\omega$ for each $Q\in \Q'$.
Then
\begin{equation}
\label{supbound167gtre}
|\sum_{Q\in \Q'}\int_{\R^2}\prod_{i=1}^3\pi_{\omega_{i}}^{(i)}F_i|\lesssim \prod_{i=1}^3\|F_i\|_{p_i}
\end{equation}
for each $1< p_i<\infty$ satisfying $\frac{1}{p_1}+\frac{1}{p_2}+\frac{1}{p_3}=1$. Moreover, the implicit constant in \eqref{supbound167gtre} is independent of $\Q'$ and $F_i\in L^{p_i}(\R^2)$, and it only depends on $p_i$.
\end{proposition}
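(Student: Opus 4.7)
The plan is to recognize the sum as a classical two-dimensional paraproduct, after a modulation that brings the common frequency $\xi$ to the origin. Since $L^{p_i}$ norms are invariant under multiplication by a unimodular exponential, I would replace $F_1, F_2, F_3$ by $e^{-i\xi x}F_1$, $e^{-i\xi x}F_2$, $e^{2i\xi x}F_3$. This translates each $m_{\omega_1}$, $m_{\omega_2}$ by $-\xi$ and each $m_{\omega_3}$ (in the $x$-variable) by $2\xi$. By the hypothesis $\xi\in\bar\omega$ and the constraint $-2\xi\in C_0\omega_3$ from Definition \ref{deffreqint}, all three translated $x$-frequency intervals lie within distance $O(|\omega|)$ of the origin, with length comparable to $|\omega_3|\sim |\omega|$.

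Next I would organize the one-parameter family $\Q'$ by scale. Since each $\omega_i$ belongs to a shifted dyadic grid with sidelength $2^{jJ}$ and satisfies $\xi\in\bar\omega\subset 4000\omega$, at each scale $j$ there are only $O(1)$ admissible cubes. Splitting into $O(1)$ subfamilies, we may assume there is at most one cube $Q_j$ per scale, so after modulation the form reads
\be{paraprod-reduce}
\sum_{j\in\Z}\int_{\R^2}\Pi^{(1)}_j \tilde F_1\cdot \Pi^{(2)}_j \tilde F_2\cdot \Pi^{(3)}_j \tilde F_3\,dx\,dy,
\end{equation}
where each $\Pi^{(i)}_j$ is a Fourier multiplier at scale $2^{jJ}$ centered near the origin, and for $i=3$ the $y$-component $m_{[|\omega_3|,2|\omega_3|]}(\eta_3)$ is a genuine Littlewood--Paley piece, supported in $[2^{jJ},2^{jJ+1}]$, hence away from $\eta_3=0$.

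The third step is to recognize \eqref{paraprod-reduce} as a two-dimensional Coifman--Meyer paraproduct. Restricted to the hyperplane $\xi_1+\xi_2+\xi_3=0$, $\eta_1+\eta_2+\eta_3=0$, the joint symbol
\bas
m(\xi,\eta)=\sum_j m^{(1)}_j(\xi_1)\,m^{(2)}_j(\xi_2)\,m^{(3a)}_j(\xi_3)\,m^{(3b)}_j(\eta_3)
\eas
is smooth away from the origin and satisfies Marcinkiewicz-type derivative bounds, because at each scale the bumps are adapted to intervals of length $2^{jJ}$ located at distance $O(2^{jJ})$ from $0$, and the factor $m^{(3b)}_j(\eta_3)$ supplies a bona fide dyadic Littlewood--Paley decomposition in the $y$-variable of $F_3$. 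The standard two-dimensional Coifman--Meyer multilinear multiplier theorem then yields the desired bound for all $1<p_i<\infty$ with $\tfrac{1}{p_1}+\tfrac{1}{p_2}+\tfrac{1}{p_3}=1$, with a constant depending only on the $p_i$ and on uniform bounds for the adapted bumps (hence independent of $\Q'$).

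The main (and only) subtlety I anticipate is verifying that, even though the individual $x$-projections $\Pi^{(i)}_j$ are not centered at $0$ but merely $O(2^{jJ})$-close to it, the partial sums $\sum_j$ still obey the Coifman--Meyer symbol estimates; this is handled by viewing the $x$-part at each scale as an $O(1)$-linear combination of translated Littlewood--Paley bumps and summing derivative bounds scale by scale. An alternative, more self-contained route is to pair \eqref{paraprod-reduce} as a $y$-paraproduct between $F_3$ (Littlewood--Paley decomposed) and the product $F_1F_2$, estimate by the square function in $y$, and control the remaining $x$-integration via Hölder after inserting the $x$-kernel bounds for $\Pi^{(i)}_j$; this avoids any reference to abstract multiplier theorems and keeps the argument elementary.
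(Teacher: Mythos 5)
Your reduction to $O(1)$ cubes per scale and the modulation normalization match the set-up of the paper's proof, but the heart of your argument --- citing the standard two-dimensional Coifman--Meyer theorem --- does not apply here, and that is precisely the difficulty this proposition is designed to confront. After modulation the summed symbol depends only on $(\xi_1,\xi_2,\eta_3)$ and is completely independent of $(\eta_1,\eta_2)$; consequently, as a symbol on the full (restricted) frequency space it is \emph{not} ``smooth away from the origin with Marcinkiewicz-type bounds'': its $\eta_1$-derivatives (through $\eta_3=-\eta_1-\eta_2$) are of size $|\eta_3|^{-1}$ at points where $|\eta_1|,|\eta_2|$ are huge but $\eta_1+\eta_2$ is small, so the singularity is a two-dimensional subspace rather than a point. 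Coifman--Meyer theory covers point singularities only; a symbol satisfying such estimates merely in the reduced variables $(\xi_1,\xi_2,\eta_3)$ is exactly the ``one and a half dimensional'' object this paper is about, and it needs an argument, not a citation. Your proposed repair (viewing the $x$-parts as translated Littlewood--Paley bumps and summing derivative bounds scale by scale) does not address this, because in the case $\xi\in\omega$ the $x$-multipliers on $F_1,F_2$ need not vanish at $\xi$: there is then no $x$-orthogonality at all, and the only cancellation available in the whole sum comes from the $\eta_3$-bands of $F_3$.

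The same issue sinks the ``elementary'' alternative you sketch: pairing against the $y$-square function of $F_3$ via Cauchy--Schwarz in the scale $j$ forces you to bound $\bigl\|(\sum_j |(T^x_jF_1)(T^x_jF_2)|^2)^{1/2}\bigr\|_{p_3'}$, and this square function is unbounded, since the non-mean-zero projections $T^x_j$ can each be essentially the identity on a fixed compact frequency set for all large $j$. The paper's proof shows what replaces this. First it splits off the cubes for which some $m_{\omega_i}$ vanishes at $\xi$ (there two square functions plus a maximal function suffice); in the remaining overlapping case it writes $T_k=\mathrm{Id}-\sum_{k'>k}S_{k'}$ and expands into four terms. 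The pure identity--identity term is handled with no $j$-square function at all, by H\"older against the single two-dimensional singular integral $\sum_k a_k\pi_kF_3$ acting on $F_3$ alone; the terms containing at least one high-pass $S_{k'}$ lock the scales to within $O(1)$ and are estimated by square functions on $F_1$ and/or $F_2$ (now mean-zero pieces) together with the maximal truncation $\sup_{k'}|\sum_{k<k'}a_k\pi_kF_3|$ of a two-dimensional singular integral. Some version of this telescoping, or an equivalent cancellation mechanism, is indispensable; as written, your proof has a genuine gap at its main step.
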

\begin{proof}
Note that since $\xi\in\bar\omega$ the cubes have distinct scales.
For $i\in\{1,2\}$ denote by $m_{\omega_i}$ the multiplier associated with the one dimensional projections $\pi_{\omega_{i}}^{(i)}$.
We split $\Q'$ in two collections. The first collection $\Q_1'$ will consist of those $Q$ for which (at least one of) $m_{\omega_i}$ vanishes at $\xi$. The proof of \eqref{supbound167gtre} immediately follows in this case by estimating
$$|\sum_{Q\in \Q'_1}\prod_{i=1}^3\pi_{\omega_{i}}^{(i)}F_i|$$
by the product of square functions on the $i^{th}$ and third component and a maximal function on the remaining component.

The second collection $\Q_2'$ will consist of those $Q$ for which none of $m_{\omega_i}$ vanishes at $\xi$. It follows that $\xi\in \omega$ for each $Q\in\Q_2' $. Let $\psi$ be a function which equals 1 on the interval centered at $\xi$ with length $1000C_0$ and vanishes outside the double of that interval. We also denote by 
$$\psi_k(\eta)=\frac{1}{2^{k}}\psi(\eta/2^{k}).$$

By writing $m_{\omega_i}=m_{\omega_i}(\xi)\psi_{|\omega|}+(m_{\omega_i}-m_{\omega_i}(\xi)\psi_{|\omega|})$ and by the discussion in the previous case, it follows that it suffices to prove \eqref{supbound167gtre} with $m_{\omega_i}(\xi)\psi_{|\omega|}$ replacing $m_{\omega_i}$. Since $\|m_{\omega_i}\|_{\infty}\lesssim 1$, it further follows that it suffices to prove the following more general estimate
\be{ashhweuiycnsac';iewop'}
|\int_{\R^2}\sum_{k\in \Z}a_k(T_kF_1)(T_kF_2)\pi_{k}F_3|\lesssim \prod_{i=1}^3\|F_i\|_{p_i},
\end{equation}
where $T_k$ is the one dimensional projection associated with $\psi_{kJ}$, $\pi_{k}$ is a two dimensional projection associated with a multiplier adapted to $[2\xi-2^{kJ+10}C_0, 2\xi+2^{kJ+10}C_0]\times [2^{kJ},2^{kJ+1}]$ and $a_k$ is a sequence bounded in absolute value by 1. Moreover, the implicit constant in \eqref{ashhweuiycnsac';iewop'} will only depend on $p_i$. 

To prove \eqref{ashhweuiycnsac';iewop'}, write 
$$T_kF=F-\sum_{k'>k}S_{k'}F,$$
where $S_{k'}:=T_{k'}-T_{k'-1}$.
It remains to control four terms, namely
\be{ashhweuiycnsac';iewop'1}
|\int_{\R^2}\sum_{k\in\Z}a_k(\sum_{k'>k}S_{k'}F_1)F_2\pi_{k}F_3|
\end{equation}
\be{ashhweuiycnsac';iewop'2}
|\int_{\R^2}\sum_{k\in\Z}a_kF_1(\sum_{k'>k}S_{k'}F_2)\pi_{k}F_3|
\end{equation}
\be{ashhweuiycnsac';iewop'3}
|\int_{\R^2}\sum_{k\in\Z}a_kF_1F_2\pi_{k}F_3|
\end{equation}
and 
\be{ashhweuiycnsac';iewop'4}
|\int_{\R^2}\sum_{k\in\Z}a_k(\sum_{k'>k}S_{k'}F_1)(\sum_{k'>k}S_{k'}F_2)\pi_{k}F_3|.
\end{equation}

Let us take a look first at the term in \eqref{ashhweuiycnsac';iewop'4}. 
Due to frequency support  it equals 
$$|\sum_{l_1,l_2\in\{-1,0,1\}}\int_{\R^2}\sum_{k'\in\Z}(S_{k'+l_1}F_1)(S_{k'+l_2}F_2)(\sum_{k<k'}a_k\pi_{k}F_3)|.$$
Each of the nine terms corresponding to various values of $l_1,l_2$ is easily bounded by the product of square functions on the first two functions and the maximal truncation of a two dimensional singular integral on the third function. The estimate then follows from the well known boundedness of these two operators.

The terms \eqref{ashhweuiycnsac';iewop'1} is estimated by the same argument, upon noting that for $k'>k$
$$
|\int_{\R^2}(S_{k'}F_1)F_2\pi_{k}F_3|=|\int_{\R^2}(S_{k'}F_1)(S_{k'+1}F_2+S_{k'}F_2+S_{k'-1}F_2)\pi_{k}F_3|.
$$
A similar argument works for \eqref{ashhweuiycnsac';iewop'2}.

The proof of \eqref{ashhweuiycnsac';iewop'3} is immediate from the boundedness of the two dimensional singular integral operator 
$$T(F_3)=\sum_{k\in\Z}a_k\pi_{k}F_3.$$
\end{proof}

%%%%%%%%%%%%%%%%%%%%%%%%%%%%%%%%%%%%
\subsubsection{Proof of Proposition \ref{mainsingletreeestimate5}}
\label{sec:22}
The  proof of Proposition \ref{mainsingletreeestimate5} relies on Proposition \ref{prop:paraproduct11286} and on the considerations in Section \ref{sec:7}, mostly on Proposition \ref{phase-space}. 

By using standard manipulations like in Section 7 from \cite{MTT8}, based on triangle's inequality, \eqref{e-diff}, Lemma \ref{count}, Lemma \ref{triv-bounds} and H\"older's inequality, one can easily reduce Proposition \ref{mainsingletreeestimate5} to proving 

\begin{equation}
\label{supbound177654}
\left|\sum_{j\in\J_\T}\int_{\R^2}(\prod_{i=1}^2\tilde{\chi}_j\tilde{\pi}_jF_i)\tilde{\tilde{\chi}_j}\tilde{\pi}_3F_3\right|\lesssim_{\theta_i}|R_\T|\prod_{i=1}^{3}\size_i^{*}(\T)^{\theta_i}.
\end{equation}

The novelty of \eqref{supbound177654} is that it has the spacial cutoffs attached to each function.
Note that we use both $\tilde{\chi}_j$ and $\tilde{\tilde{\chi}}_j$. There is no reason to smoothen out a spacial component that is not correlated with a frequency localization. The smoothing is used to preserve frequency localization.  

By scale invariance it suffices to assume that $|R_\T|=1$, while by  modulation symmetry we can also assume that the tree sits near the origin, that is $\xi_\T=0$. We may further assume that the tree is either lacunary or overlapping, see Remark \ref{3lacanyways}. These reductions place us in the setting of Section \ref{sec:7} so we have all the results in that section at our disposal.

The proof of \eqref{supbound177654} will follow precisely the same lines as the proof of Proposition 7.1 in \cite{MTT8}. We briefly describe the strategy. One first uses the estimates from Proposition \ref{phase-space} on  how well phase-space projections approximate functions on a tree, (more precisely, \eqref{lacunary13499}, \eqref{lacunary13499ttgdt} and \eqref{error-1}, depending on whether the tree is lacunary or overlapping), to estimate  
$$
%\label{supbound177654hsdjkhfuye}
|\sum_{j\in\J_\T}\int_{\R^2}(\prod_{i=1}^2\tilde{\chi}_j\tilde{\pi}_jF_i)\tilde{\tilde{\chi}_j}\tilde{\pi}_3F_3|\lesssim_{\theta_i}|\sum_{j\in\J_\T}\int_{\R^2}(\prod_{i=1}^2\tilde{\chi}_j\tilde{\pi}_j\Pi_i(F_i))\tilde{\tilde{\chi}_j}\tilde{\pi}_3\Pi_3(F_3)| +|R_\T|\prod_{i=1}^{3}\size_i^{*}(\T)^{\theta_i},
$$
where $\Pi_i(F_i)$ denotes the phase-space projection of $F_i$ on the tree $\T$.
Then one uses \eqref{fire-1} and \eqref{lacunary13497} to further bound 
$$|\sum_{j\in\J_\T}\int_{\R^2}(\prod_{i=1}^2\tilde{\chi}_j\tilde{\pi}_j\Pi_i(F_i))\tilde{\tilde{\chi}_j}\tilde{\pi}_3\Pi_3(F_3)|$$
by 
$$|R_\T|\prod_{i=1}^{3}\size_i^{*}(\T)^{\theta_i}.$$
We omit the details.

%%%%%%%%%%%%%%%%%%%%%%%%%%%%%%%%%%
\subsubsection{Deducing Theorem \ref{main2} from Proposition \ref{mainsingletreeestimate5}}
\label{sec:34}
In this section we state a Bessel type inequality that will allow us to deduce Theorem \ref{main2} from Proposition \ref{mainsingletreeestimate5}

The idea is to break  $\P$ into collections of trees $\T$, such that one has control on both the maximal $i$-sizes $\size^*_i(\T)$ and on the $L^1$ norm of the counting function $\sum_\T |R_\T|$. 

The selection of the trees is done by a greedy selection process, which will be
defined in various steps. We need the following definition:

\begin{definition}\label{convex-def}
Call a tree convex, if it is a selected tree in a greedy selection
process. 
Call a subset $\P_0\subseteq \P$ convex, if it is of the  form
$\P\setminus (\T_1\cup \dots\cup \T_k)$ where $\T_1,\dots, \T_k$ are the
selected trees of a greedy selection process.
\end{definition}

\begin{proposition}
\label{selection}  Let $1 \leq i \leq 3$, $\lambda>0$, and suppose that $\P_0$ is a convex collection of multi-tiles such that
\be{m-bound} \size^*_i(\P_0) < 2\lambda.
\end{equation}
Then there exists a collection $\F^{*}$ of pairwise disjoint convex trees in $\P_0$ such that for each $\epsilon>0$ we have
\be{t-size}
 \sum_{\T \in \F^{*}} |R_\T| \lesssim_{\epsilon}\lambda^{-2-\epsilon}\|F_i\|_2^2
\end{equation}
if $i\in\{1,2\}$ and 
\be{t-size3lac}
 \sum_{\T \in \F^{*}} |R_\T| \lesssim \lambda^{-2}\|F_3\|_2^2
\end{equation}
if $i=3$,
and the remainder set $\P' := \P_0\setminus\bigcup_{\T \in \F^{*}} \T$ is convex and satisfies
\be{remainder} 
\size^*_i(\P') < \lambda.
\end{equation}
\end{proposition}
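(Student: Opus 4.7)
The plan is to construct $\F^*$ through a greedy selection algorithm and then establish \eqref{t-size} and \eqref{t-size3lac} via a TT$^*$/Bessel-type inequality. Starting with $\P_0^{(0)}:=\P_0$, I would iterate the following: at stage $k$, inspect $\P_0^{(k-1)}$; if $\size_i^*(\P_0^{(k-1)})<\lambda$, stop; otherwise pick a tree $\T_k\subseteq \P_0^{(k-1)}$ (lacunary or overlapping for $i\in\{1,2\}$, any kind for $i=3$) whose $\size_i$ is within a factor of $2$ of $\size_i^*(\P_0^{(k-1)})$, extract a maximal such tree by enlarging it to include every $P\in\P_0^{(k-1)}$ compatible with its top data $(\xi_{\T_k},R_{\T_k})$, add it to $\F^*$, and set $\P_0^{(k)}:=\P_0^{(k-1)}\setminus \T_k$. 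By construction the $\T_k$ are maximal in a greedy process, hence convex in the sense of Definition \ref{convex-def}, and the remainder $\P'$ is convex and satisfies \eqref{remainder}. Termination is automatic since $\P_0$ is finite.

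The second step is to bound the counting function. For $i=3$ the size lower bound gives
$$\lambda^2\sum_{k}|R_{\T_k}|\le \sum_{k}\sum_{P\in\T_k}\|F_3\|_{P,3}^2=\sum_{P\in\bigcup_k\T_k}\|\tilde{\chi}_{R_P}^{10}T_{m_P^*}F_3\|_{L^2(\R^2)}^2,$$
where $m_P^*$ is a near-maximizer for $\|F_3\|_{P,3}$. The multipliers $m_P^*$ live on the two-dimensional boxes $\omega_{P_3}\times[|\omega_{P_3}|,2|\omega_{P_3}|]$, which are pairwise essentially disjoint: different scales are forced to be disjoint by the third coordinate, and equal scales are separated by the sparsification in Definition \ref{deffreqint}. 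A standard TT$^*$ argument, using a Schur test to absorb the polynomial overlap of the spatial tails $\tilde{\chi}_{R_P}^{10}$, then yields $\sum_P\|\tilde{\chi}_{R_P}^{10}T_{m_P^*}F_3\|_2^2\lesssim \|F_3\|_2^2$, producing \eqref{t-size3lac} without any $\epsilon$-loss.

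For $i\in\{1,2\}$ on the lacunary branch of $\F^*$ the same strategy applies: the size is still an $\ell^2$ sum, the near-maximizing multipliers $m_P$ live on $\omega_P$, and the greedy maximality together with lacunarity ($\xi_{\T_k}\notin 2\omega_P$) forces the $\omega_P$'s to be separated from the $\xi_{\T_{k'}}$'s of the other selected trees. The resulting almost orthogonality, combined with a Schur estimate for the spatial tails, gives \eqref{t-size} with the $\epsilon$-loss coming only from the Schur test on those tails.

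The main obstacle is the overlapping branch of $\F^*$ for $i\in\{1,2\}$. Here the size $\size_i^o(\T_k)\ge\lambda$ is witnessed by a \emph{single} wave packet $m_{\T_k}$ adapted to $10\omega_{\T_k}$ vanishing at some $v_{\T_k}\in 10\omega_{\T_k}$, so the Bessel inequality must run at the level of trees rather than multi-tiles:
$$\lambda^2\sum_{k}|R_{\T_k}|\le \sum_{k}\|\tilde{\chi}_{R_{\T_k}}^{10}T_{m_{\T_k}}F_i(\cdot,y)\|_{L^2_{x,y}}^2.$$
The wave packets $m_{\T_k}$ of different overlapping trees can share the same frequency window (they all localize near the common top frequency $\xi_{\T_k}$), so genuine frequency orthogonality is not available. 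The key is to exploit the vanishing at $v_{\T_k}$, which, after a Taylor expansion, produces mean-zero cancellation that converts frequency near-coincidence into a gain in the off-diagonal kernel of $T_{m_{\T_k}}T_{m_{\T_{k'}}}^*$; the spatial pieces $R_{\T_k},R_{\T_{k'}}$ are moreover forced apart by maximality of the greedy selection, yielding polynomial off-diagonal decay. A Schur test then sums these contributions, but because the decay rate depends on the adaptation order $N^2$ rather than on strict disjointness, a slack of $\lambda^{-\epsilon}$ must be paid in order to close the estimate — this is precisely the origin of the $\lambda^{-2-\epsilon}$ exponent in \eqref{t-size}.
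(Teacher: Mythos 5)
Your overall architecture (greedy selection until the size drops below $\lambda$, then a Bessel-type inequality for the tree tops) matches the paper, and your treatment of $i=3$ is essentially the paper's "classical" argument: across scales the boxes $\omega_{P_3}\times[|\omega_{P_3}|,2|\omega_{P_3}|]$ are frequency-disjoint and within a scale one uses spatial disjointness, so \eqref{t-size3lac} holds without loss. The genuine gap is in the case $i\in\{1,2\}$, and it is not a technical one: because tiles here are one and a half dimensional, distinct multi-tiles (in the same or in different selected trees) can have \emph{identical} one dimensional tiles $I_P\times\omega_P$, differing only in the $y$-interval $J_P$. Consequently there is no absolute almost-orthogonality for the collection of selected wave packets, and no TT$^*$/Schur argument can give \eqref{t-size} directly: the counting function $N_{\F^*}=\sum_\T\chi_{R_\T}$ is in general unbounded in $L^\infty$, and the best one can do is a Bessel inequality with a loss of $\log^2(2+\|N_{\F'}\|_\infty)$, fiber by fiber in $y$. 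This is exactly how the paper proceeds: it fibers over $y$, splits the fibered tiles into \emph{reliable} subcollections, proves that the induced one dimensional trees form a genuine forest (strong disjointness), applies the one dimensional almost-orthogonality result with logarithmic loss (Proposition \ref{nmprop}, localized as Proposition \ref{nmproplocal}), controls the expanded counting function only after discarding a small part of the family (Lemma \ref{controlonexpandedcountingfunction}), and finally converts the logarithmic losses into the $\lambda^{-2-\epsilon}$ of \eqref{t-size} via the $L^1$/BMO bootstrap Lemma \ref{cor:7}. Your attribution of the $\epsilon$-loss to "a Schur test on the spatial tails" is therefore wrong: polynomial tail overlap costs only a constant; the $\epsilon$ is the price of an uncontrolled counting function.

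Moreover, your selection rule does not produce the strong disjointness your Bessel step silently assumes. Simply extracting a near-extremal maximal tree and deleting it leaves behind tiles that sit "under" earlier tree tops, so later trees need not be strongly disjoint from earlier ones and the forest structure fails. The paper's algorithm is built precisely to prevent this: lacunary trees are split into upper and lower families and selected in order of extremal top frequency $\xi_\T$, the badness condition \eqref{selbadtreelacunary} is decomposed over spatial annuli indexed by $n$, and at each selection one removes not only the selected tree but the whole collection $\U((\T,\xi_\T,R_\T),n)$ of maximal trees with tops in $2^{n+3}R_\T$; the strong disjointness of the induced one dimensional trees is then proved by contradiction from this ordering and neighborhood removal. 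For the overlapping branch your "Taylor expansion/mean-zero gain in $T_{m_\T}T_{m_{\T'}}^*$" does not work as stated: two selected overlapping trees can have essentially the same frequency window and nearby vanishing points, and maximality alone does not force their tops spatially apart. The paper instead decomposes each admissible $m_\T$ into annular pieces $m^{\pm}_{\T,s,v}$ with geometric weights $2^{-s}$, runs a separate selection for each $s$ and sign with extremal $v_\T$, and proves that the boxes $\omega^{-}_{\T,s,v_\T}\times 2^{n+1}R_\T$ of the selected trees are pairwise disjoint, after which the same fibered forest Bessel machinery applies. Without the frequency-ordered selection with neighborhood removal, the $y$-fibering into reliable one dimensional forests, and the $\log$-to-$\epsilon$ conversion of Lemma \ref{cor:7}, the key estimate \eqref{t-size} is not established by your argument.
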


We postpone the proof of this key proposition to the next section.

We continue by noting that by using multilinear interpolation (see \cite{janson}) it suffices to prove Theorem \ref{main2} under the assumption that $F_i=\chi_{E_i}$ are characteristic functions of sets of finite measure.

Starting with $m$ large and working downward, applying Proposition 
\ref{selection} for each $1 \leq i \leq 3$ for each $m$, we obtain

\begin{corollary}\label{select-corollary} Let $\epsilon>0$ be fixed. For every integer $m$ there exists a collection $\F_m$ of pairwise disjoint convex trees in $\P$ such that we have the size estimate
\be{size-est}
\size^*_i(\bigcup_{\T\in\F_m} \T) < (|E_i| 2^{m})^{\frac1{2+\epsilon}}
\end{equation}
for all $1 \leq i \leq 2$ and $m \in \Z$,
\be{size-est3lac}
\size^*_3(\bigcup_{\T\in\F_m} \T) < (|E_i| 2^{m})^{1/2},
\end{equation}
for all $m \in \Z$, 
 the counting function  estimate
\be{tree-count}
\sum_{\T \in \F_m} |R_\T| \lesssim_{\epsilon}  2^{-m}
\end{equation}
for all $m \in \Z$, and the partitioning
\be{partition}
\P = \P_2\cup \bigcup_{m \in \Z} \bigcup_{\T \in \F_m} \T.
\end{equation}
where $\P_2$ is a subset of $\P$ with $\size^*_i(\P_2)=0$ for all $1\le i\le 3$.
\end{corollary}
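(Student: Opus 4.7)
The plan is a routine iterative application of Proposition \ref{selection}. Set
$$\lambda_i(m):=(|E_i|\,2^m)^{1/(2+\epsilon)}\ (i\in\{1,2\}),\qquad \lambda_3(m):=(|E_3|\,2^m)^{1/2}.$$
Since $\P$ is finite, $\size_i^*(\P)<\infty$ for each $i$, so we can choose $m_0$ so large that $\size_i^*(\P)<\lambda_i(m_0)$ for every $i$, and declare $\F_m=\emptyset$, $\P^{(m)}=\P$ for all $m\ge m_0$.

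Working downward in $m$, I would iterate as follows. At stage $m<m_0$ we are handed a convex subcollection $\P^{(m+1)}\subseteq\P$ satisfying $\size_i^*(\P^{(m+1)})<\lambda_i(m+1)$ for every $i$; since $\lambda_i(m+1)=2^{1/(2+\epsilon)}\lambda_i(m)<2\lambda_i(m)$, the hypothesis \eqref{m-bound} of Proposition \ref{selection} is met. Apply the proposition successively for $i=1,2,3$ to extract pairwise disjoint convex tree families $\F_m^{(i)}$; after each application the remainder is again convex by Definition \ref{convex-def}, and since $\size_j^*$ is monotone under passage to subcollections, the bounds achieved for the previous indices persist through subsequent extractions. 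Setting $\F_m:=\F_m^{(1)}\cup\F_m^{(2)}\cup\F_m^{(3)}$ and letting $\P^{(m)}$ be the final remainder at stage $m$, we get $\size_i^*(\P^{(m)})<\lambda_i(m)$ for each $i$, which is precisely the hypothesis needed at stage $m-1$.

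The counting estimate \eqref{tree-count} then drops out by summing the three instances of \eqref{t-size} (for $i=1,2$) and \eqref{t-size3lac} (for $i=3$), each of which reduces, after plugging in $\|F_i\|_2^2=|E_i|$, to $\lesssim_\epsilon 2^{-m}$. For the size bounds \eqref{size-est} and \eqref{size-est3lac}: every $\T\in\F_m$ is greedily selected from a collection of $i$-size below $\lambda_i(m+1)$, so $\size_i^*(\bigcup_{\T\in\F_m}\T)\le\lambda_i(m+1)$, which equals $\lambda_i(m)$ up to the fixed constant factor $2^{1/(2+\epsilon)}$; this factor is harmlessly absorbed either by redefining $\lambda_i(\cdot)$ with a bounded multiplicative constant or by a trivial relabeling of the index $m$. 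Finally, since $\P$ is finite only finitely many $\F_m$ are nonempty, and $\P_2:=\bigcap_m\P^{(m)}$ has $\size_i^*(\P_2)\le\lambda_i(m)\to 0$ as $m\to-\infty$, whence $\size_i^*(\P_2)=0$ and the partition \eqref{partition} holds.

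The only real obstacle here is bookkeeping: the three indices $i\in\{1,2,3\}$ must be processed simultaneously at each stage without disturbing the reductions already achieved for the other indices. This is handled purely formally by the monotonicity of $\size_j^*$ under subcollections and by the inheritance of convexity under greedy extraction (Definition \ref{convex-def}); no further analytic input beyond Proposition \ref{selection} is required.
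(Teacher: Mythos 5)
Your proposal is correct and is essentially the paper's argument: the paper disposes of this corollary with the one-line remark "starting with $m$ large and working downward, applying Proposition \ref{selection} for each $1\le i\le 3$ for each $m$," and your iteration, the monotonicity/convexity bookkeeping across the three indices, and the absorption of the harmless factor $2^{1/(2+\epsilon)}$ (by relabeling $m$) fill in exactly the details the authors left implicit.
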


We also need the following estimate on the maximal size by the Hardy-Littlewood maximal function.

\begin{lemma}
\label{upperboundestforsize}
For all $1\le i\le 3$ and all $F_i\in L^{\infty}(\R^2)$ we have
$$\size_i^{*}(\P)\lesssim \|F_i\|_{\infty}.$$
\end{lemma}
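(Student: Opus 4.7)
The plan is to homogenize $\|F_i\|_\infty=1$ and bound $\size_i(\T)\lesssim 1$ for each tree $\T$. The overlapping case for $i\in\{1,2\}$ is immediate: each multiplier $m_\T$ adapted to $10\omega_\T$ has inverse Fourier transform of $L^1$ norm $O(1)$, giving the pointwise bound $|T_{m_\T}(F_i(\cdot,y))(x)|\lesssim\|F_i\|_\infty=1$ by Young's inequality, and combined with $\|\tilde\chi_{R_\T}^{10}\|_{L^2}\lesssim|R_\T|^{1/2}$ this yields $\size_i^o(\T)\lesssim 1$.

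For lacunary trees (with $i\in\{1,2\}$) and for any tree when $i=3$, the task reduces to showing $\sum_{P\in\T}\|F_i\|_P^2\lesssim|R_\T|$, after selecting near-maximizers $m_P$ in the sup defining each $\|F_i\|_P$. The key structural fact I would exploit is the Littlewood--Paley-type separation of the frequency windows across scales within $\T$. For a lacunary tree with $i\in\{1,2\}$, the conditions $\bar\omega_\T\subset\bar\omega_P$ and $\xi_\T\notin 2\omega_P$ force $\omega_P$ to lie in an annulus around $\xi_\T$ of size $\sim|\omega_P|$, so the $\omega_P$ at distinct scales are pairwise well-separated; for $i=3$ the intervals $[|\omega_{P_3}|,2|\omega_{P_3}|]$ in the second coordinate are automatically disjoint across scales. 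Accordingly, letting $Q_j$ denote the smooth Fourier projection onto a slight enlargement of the scale-$j$ window, Bessel yields $\sum_j\|Q_jF\|_{L^2}^2\lesssim\|F\|_{L^2}^2$.

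Next I would split $F_i=F_i^{loc}+F_i^{far}$ with $F_i^{loc}=F_i\cdot\phi_\T$ for a smooth cutoff supported in $4R_\T$ and equal to $1$ on $2R_\T$, so that $\|F_i^{loc}\|_{L^2}^2\lesssim|R_\T|$. For the local term the plan is to use the pointwise domination
\[|T_{m_P}(F_i^{loc}(\cdot,y))(x)|\lesssim M_x(Q_jF_i^{loc}(\cdot,y))(x)\qquad(j_P=j),\]
valid because $\check m_P$ is $L^1$-adapted at scale $|I_P|$ and hence dominated by the directional Hardy--Littlewood maximal function, together with the uniform estimate $\sum_{j_P=j}\tilde\chi_{R_P}^{20}(x,y)\lesssim 1$ arising from the spatial disjointness of the $R_P$ at a common scale. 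Combined with $L^2$-boundedness of $M_x$, these give $\sum_{j_P=j}\int\tilde\chi_{R_P}^{20}|T_{m_P}F_i^{loc}|^2\lesssim\|Q_jF_i^{loc}\|_{L^2}^2$, and summing in $j$ via Bessel then produces $O(\|F_i^{loc}\|_{L^2}^2)=O(|R_\T|)$. For the far term, I would invoke the order-$N^2$ rapid decay of $\check m_P$: for $(x,y)\in R_P\subset R_\T$ the convolution with $F_i^{far}$ (supported at distance $\gtrsim|I_\T|$ in the $x$-variable) contributes at most $\lesssim(|I_P|/|I_\T|)^{N^2-1}$, and the resulting tile sum is geometric in scale and bounded by $O(|R_\T|)$ as well.

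The main obstacle will be that a naive pointwise bound $|T_{m_P}F_i|\lesssim\|F_i\|_\infty$ produces a spurious multiplicative factor equal to the number of active frequency scales in $\T$. Closing this gap requires precisely the Bessel/Littlewood--Paley separation of frequency windows that defines a lacunary tree, used in tandem with a spatial truncation to restore square-integrability of $F_i$.
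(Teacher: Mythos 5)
Your overall strategy is sound and the conclusion is reachable along the lines you describe, but it is organized differently from the paper. The paper disposes of the lemma by fibering: for $i\in\{1,2\}$ it freezes $y$, observes that the weighted quantities $\|\tilde\chi_{R_P}^{10}T_{m_P}(F_i(\cdot,y))\|_{L^2_x}$ summed over a lacunary tree are exactly the one dimensional size of the fiber function $F_i(\cdot,y)$ over the induced one dimensional tree, invokes the classical one dimensional bound (Proposition 6.3 of \cite{MTT8}) on each fiber, and then integrates in $y$ against the weights $\tilde\chi_{J_P}^{20}$; the $i=3$ case is dismissed as entirely classical. What you propose is, in effect, a self-contained reproof of that classical ingredient directly in the two dimensional setting: Littlewood--Paley/Bessel separation of the scale-$j$ frequency windows (correct here, since $\bar\omega_\T\subseteq\bar\omega_P$ and $\xi_\T\notin 2\omega_P$ pin $\omega_P$ to an annulus of width $\sim|\omega_P|$ about $\xi_\T$, and the scales are $2^J$-separated with $J$ large; for $i=3$ the second-coordinate windows $[|\omega_{P_3}|,2|\omega_{P_3}|]$ do the job, though there the maximal function should be the two dimensional one), maximal-function domination at a fixed scale together with $\sum_{j_P=j}\tilde\chi_{R_P}^{20}\lesssim 1$, and a local/far splitting. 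Your treatment of the overlapping size via $\|\check m_\T\|_{L^1}\lesssim 1$ and $\|\tilde\chi_{R_\T}^{10}\|_{L^2}\lesssim|R_\T|^{1/2}$ is correct and matches the (implicit) easy case in the paper.

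The one step that does not work as written is the far term. The support of $F_i^{\mathrm{far}}=F_i(1-\phi_\T)$ is the complement of $2R_\T$, which is \emph{not} contained in the set where the $x$-coordinate is at distance $\gtrsim|I_\T|$ from $I_\T$: for a fiber $y\notin 2J_\T$ one may have $F_i^{\mathrm{far}}(\cdot,y)=F_i(\cdot,y)$ on all of $\R$, and then $T_{m_P}(F_i^{\mathrm{far}}(\cdot,y))(x)$ is only $O(\|F_i\|_\infty)$, not $O((|I_P|/|I_\T|)^{N^2-1})$, so your claimed per-tile bound fails on those fibers. The estimate still closes, but the smallness there must come from the $y$-factor of the weight rather than from $\check m_P$: since the operator does not act in $y$ (for $i\in\{1,2\}$), one has $\int_{\dist(y,J_\T)\ge|J_\T|/2}\tilde\chi_{J_P}^{20}(y)\,dy\lesssim|J_P|(|J_P|/|J_\T|)^{19}$, and multiplying by the $O(|I_P|)$ coming from the $x$-integral and by the $\lesssim|R_\T|/|R_P|$ tiles per scale gives $\lesssim|R_\T|\,2^{-19(j-j_\T)J}$, which is geometrically summable in the scale. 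So you should split the far region into an $x$-far part (handled by the kernel decay, as you say) and a $y$-far part (handled by the decay of $\tilde\chi_{J_P}$); with that case added, your argument is complete. The paper's fibering-plus-citation route avoids this bookkeeping entirely, at the cost of relying on the one dimensional result of \cite{MTT8}.
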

\begin{proof}
It suffices to bound by $C\|F_i\|_{\infty}$ the quantities $\size_i(\T)$ for $i\in\{1,2\}$ and $\T$ either overlapping or lacunary, and $\size_3(\T)$ for general $\T$. The estimate for $i=3$ is entirely classical (see for example the proof of its one dimensional analog, Proposition 6.3 in \cite{MTT8}). The  estimates for $i\in\{1,2\}$ follow by applying a similar argument on fibers above each $y$. Let's take for example a lacunary tree $\T$ (this is the harder case). Denote by $\I:=\{I_P:P\in\T\}$. Note that
\begin{align*}
\size_i(\T)&\le\left(\frac{1}{|R_\T|}\sum_{I\in\I}\int_{\R}\sum_{P\in\T\atop{I_P=I}}\sup_{m_P}\|\tilde{\chi}_{I}^{10}(x)T_{m_P}(F_i(\cdot,y))(x)\|_{L^2_x}^2\tilde{\chi}_{J_P}^{20}(y)dy\right)^{1/2}\\&\lesssim \left(\frac{1}{|J_\T|}\int_{\R}\tilde{\chi}_{J_\T}^{10}(y)\frac{1}{|I_\T|}\sum_{I\in\I}\sup_{m_P}\|\tilde{\chi}_{I}^{10}(x)T_{m_P}(F_i(\cdot,y))(x)\|_{L^2_x}^2dy\right)^{1/2}
\\&\lesssim \left(\frac{1}{|J_\T|}\int_{\R}\tilde{\chi}_{J_\T}^{10}(y)\|F_i(x,y)\|_{L^{\infty}_x}^2dy\right)^{1/2}\\&\lesssim \|F_i\|_{\infty},
\end{align*}
where the penultimate estimate follows from the afore mentioned one dimensional result applied to each function $F(\cdot,y)$. 
\end{proof}

We have now all pieces ready to prove Theorem \ref{main2}.
Choose some $\epsilon<\min\{p_1-2,p_2-2\}$.
In the $i=3$ case we apply Lemma \ref{upperboundestforsize}, \eqref{size-est3lac}, and the fact that $F_3$ is a characteristic function to obtain
\be{size-max}
\size^*_3(\bigcup_{\T\in \F_m}\T) \lesssim \min(2^{m/2} |E_3|^{1/2}, 1)
\end{equation}

Applying \eqref{partition} we may estimate the left-hand side of \eqref{mainineq1444} by
$$
\sum_{m \in \Z} \sum_{\T \in \F_m} |\sum_{P \in \T}
\int\int \chi_{R_P,j_P} \prod_{i=1}^n \pi_{\omega_{P_i}}^{(i)} \chi_{E_i}|.$$
(Observe that the set $\P_2$ gives no contribution, e.g. by
an appropriate application of Proposition \ref{mainsingletreeestimate5}.)

We may apply Proposition \ref{mainsingletreeestimate5} with $\theta_i := \frac{2+\epsilon}{p_i}$ for $1 \leq i \leq 2$ and $\theta_3=1$, and estimate the previous expression by
$$
\lesssim \sum_{m \in \Z} \sum_{\T \in \F_m}
|R_T| (\prod_{i=1}^{2} \size^*_i(\T)^{\frac{2+\epsilon}{p_i}}) \size^*_3(\T).$$
By \eqref{size-est}, \eqref{t-size3lac}, \eqref{size-max} and then \eqref{tree-count}, we may estimate this by
$$
\lesssim \sum_{m \in \Z} 2^{-m}
(\prod_{i=1}^{2} (2^{m} |E_i|)^{1/p_i})
\min(2^{m/2} |E_3|^{1/2},1).$$
By using the fact that $1/p_1+1/p_2+1/p_3=1$ this simplifies to
$$
(\prod_{i=1}^{2} |E_i|^{1/p_i})
\sum_{m \in \Z}
\min(2^{m/2 - m/p_3} |E_3|^{1/2},2^{-m/p_3}).$$
Performing the $m$ summation we obtain the desired estimate
$$
|\sum_{P \in \P}
\int \chi_{R_P,j_P} \prod_{i=1}^3 \pi_{\omega_{P_i}}^{(i)} F_i| \lesssim  
\prod_{i=1}^3 |E_i|^{1/{p_i}}.
$$
and conclude Theorem \ref{main2}.

%%%%%%%%%%%%%%%%%%%%%%%%%5
\subsubsection{The proof of Proposition \ref{selection}}
\label{sec:345}
We start this section by recalling a few results from \cite{DTT}. So far, we have worked with one and a half dimensional trees\footnote{Trees consist of tiles, identified by a spacial component (a square) and a frequency component (an interval). If we think about both space (in our case $\R^2$) and frequency (in our case $\R^2$) as each representing a dimension, a tile becomes a one and a half dimensional object. We adopt the same terminology for a tree}. We will continue to reserve the name "tree" for this particular structure. In addition to this, in the following discussion we will also invoke some results about one dimensional trees.
\begin{definition}
A one dimensional tile $P=I_P\times \omega_P$ is a $J$- dyadic rectangle with unit area. A one dimensional tree $(\T,\xi_\T, I_\T)$ with top data $(\xi_\T, I_\T)$ is a collection of tiles with the property that $\bar\omega_\T\subseteq \bar\omega_P$ and $I_P\subseteq I_\T$ for each $P\in\T$. 

We call $(\T,\xi_\T, I_\T)$  lacunary if for each $P\in\T$ we have $\xi_\T\notin 2\omega_P$.
\end{definition} 

We abuse notation here and use the same notation for one dimensional tiles and multi-tiles, for one dimensional trees and trees, because in our applications one dimensional tiles will arise from multi-tiles while the one dimensional trees will be generated by \emph{reliable} trees. A reliable tree is one with the property that for distinct $P,P'\in\T$ we have that $I_P\not= I_{P'}$ (or equivalently, $I_P\times\omega_P\not= I_{P'}\times\omega_{P'}$). Thus, if $\T$ is a reliable tree, then
$$\{I_P\times \omega_P:P\in\T\},$$
is a one dimensional tree. 
We will refer to it as the  one dimensional tree induced by the tree $(\T,\xi_\T,R_\T)$, and we will assign it the top data $(\xi_\T,I_\T)$.

More generally, a collection of multi-tiles $\P'$ will be called \emph{reliable}, if for distinct $P,P'\in\P'$ we have that $I_P\times\omega_P\not= I_{P'}\times\omega_{P'}$.

\begin{definition}
We say that two lacunary trees $(\T,\xi_\T,R_\T)$, $(\T',\xi_{\T'},R_{\T'})$ are \emph{strongly disjoint} if $\T \cap \T' = \emptyset$, and whenever $P \in \T$, $P' \in \T'$ are such that $\omega_P \subsetneq \omega_{P'}$, then one has $R_\T \cap R_{P'}=\emptyset$, and similarly with $\T$ and $\T'$ reversed.  We define a \emph{forest} to be any collection $\F$ of lacunary trees such that
any two distinct trees $\T, \T'$ in $\F$ are strongly disjoint.
\end{definition}

A similar definition holds for one dimensional trees. Note that the one dimensional tree induced by a reliable lacunary tree is itself lacunary.

For each collection $\F$ of trees we will denote by
$$\|\F\|_{\BMO}:= \sup_R \frac{1}{|R|} \sum_{\T \in \F: R_\T \subseteq R} |R_\T|,$$ where the supremum is taken over all the 
dyadic squares $R$. Define also the counting function 
$$N_\F:=\sum_{\T\in\F}\chi_{R_\T}.$$

The following result from \cite{DTT} shows that in order to achieve $L^1$ control over the counting function of a collection of trees (and this is essentially what we need to prove in Proposition \ref{selection}, see below), we are permitted to lose a logarithmic factor of $\|N_{\F}\|_{\infty}$, as long as the argument also works for all subcollections, and localizes to a $\BMO$ version as well.

\begin{lemma}
\label{cor:7}
 Let $\F$ be a collection\footnote{This lemma is stated in \cite{DTT} with the extra assumption that $\F$ is a forest; however, its proof in \cite{DTT} shows that $\F$ can be an arbitrary collection, actually for all practical purposes, $\F$ can be thought of as merely a collection of dyadic squares $R_\T$. We choose this minimal formulation, but remark that in our application of this lemma, $\F$ will actually be a forest. Along the same lines, we also observe that while the result in \cite{DTT} is stated for one dimensional trees, the extension to our one and a half dimensional setting requires no modifications} of trees such that 
$$\| N_{\F'}\|_1 \le  A\log^2(2+\| N_{\F'}\|_{L^\infty}) \hbox{ and } \| \F'\|_{\BMO} \le B  \log^2(2+\|N_{\F'}\|_{L^\infty}) $$
for all subcollections of trees $\F' \subseteq \F$.  Then for each $\epsilon>0$ we have
$$ \| N_{\F} \|_1 \lesssim_{\epsilon} AB^{\epsilon}$$
where the implicit constant does not depend on $\F$, but only on $\epsilon$.
\end{lemma}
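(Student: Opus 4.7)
The plan is to bootstrap the doubly-logarithmic hypotheses into a uniform bound via an iterated John--Nirenberg argument, following the scheme of \cite{DTT}. The starting observation is that the combinatorial BMO bound on the family $\F'$ transfers, up to a factor of $2$, to the usual BMO norm of the counting function $N_{\F'}$ itself. Indeed, on any dyadic square $R$ the trees with $R_\T\supsetneq R$ contribute a constant to $N_{\F'}$ on $R$, so the mean oscillation of $N_{\F'}$ over $R$ is governed solely by $\frac{1}{|R|}\sum_{\T:R_\T\subseteq R}|R_\T|\le \|\F'\|_{\BMO}$. Hence $\|N_{\F'}\|_{\BMO}\lesssim B\log^2(2+\|N_{\F'}\|_\infty)$.

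The second step is the quantitative distribution estimate obtained from John--Nirenberg: for any $\lambda>0$,
$$ |\{N_{\F'}>\lambda\}|\;\lesssim\; |\operatorname{supp}(N_{\F'})|\,\exp\!\left(-\frac{c\lambda}{B\log^2(2+\|N_{\F'}\|_\infty)}\right),$$
and $|\operatorname{supp}(N_{\F'})|\le \|N_{\F'}\|_1\le A\log^2(2+\|N_{\F'}\|_\infty)$ by hypothesis. Thus superlevel sets decay super-geometrically above the threshold $B\log^2$.

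Third, I would set up an induction on $M:=\|N_{\F'}\|_\infty$ using a Calder\'on--Zygmund stopping argument. Fix a threshold $\mu$ slightly larger than the BMO scale and slightly smaller than $M$, set $\Omega:=\{N_{\F'}>\mu\}$, and let $\{Q_j\}$ be its maximal dyadic cubes. Split $\F'=\F'_{\mathrm{small}}\cup\F'_{\mathrm{big}}$, where $\F'_{\mathrm{small}}:=\{\T: R_\T\subseteq Q_j\text{ for some }j\}$ and $\F'_{\mathrm{big}}$ is the rest. For $\T\in\F'_{\mathrm{big}}$ either $R_\T\cap\Omega=\emptyset$, in which case $N_{\F'_{\mathrm{big}}}\le\mu$ on $R_\T$, or $R_\T$ properly contains some $Q_j$, in which case a tree-count on parent scales via the BMO hypothesis shows $\|N_{\F'_{\mathrm{big}}}\|_\infty\lesssim\mu$. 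Thus $\F'_{\mathrm{big}}$ has strictly smaller $L^\infty$ norm and is handled by the inductive hypothesis. The tails $\F'_{\mathrm{small}}$ split over the $Q_j$, and since $\sum_j|Q_j|=|\Omega|$ decays exponentially in $M/(B\log^2(2+M))$, their $L^1$ contribution contributes a convergent error across iterations.

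Iterating until $M$ falls below a threshold comparable to $B\log^2(2+B)$ (where the baseline hypothesis $\|N_{\F'}\|_1\le A\log^2(2+M)$ is already of order $A(\log B)^2$) and summing the geometrically decaying errors yields the bound $\|N_\F\|_1\lesssim_\epsilon AB^\epsilon$; the $\epsilon$-loss is the price for swallowing the logarithms of $B$ in the terminal step. The main obstacle I anticipate is bookkeeping: one must confirm at each iteration that the subcollection $\F'_{\mathrm{big}}$ is still covered by the hypothesis (this is why the assumption is stated uniformly for \emph{all} subcollections $\F'\subseteq \F$), and that the John--Nirenberg estimate is applied with constants quantified precisely enough in $B$ to produce $B^\epsilon$ rather than $B$.
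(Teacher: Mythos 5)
You should first be aware that the paper does not actually prove this lemma: it is imported from \cite{DTT} (with a footnote explaining the cosmetic adjustments), so there is no in-paper argument to compare against. Judged on its own terms, your scheme is the natural reconstruction and its main components are sound: the transfer $\|N_{\F'}\|_{\BMO}\lesssim \|\F'\|_{\BMO}$ (trees with $R_\T\supsetneq R$ contribute a constant on $R$, and the tops are dyadic so dyadic John--Nirenberg applies), the distributional estimate over the maximal tops with $|\operatorname{supp} N_{\F'}|\le \|N_{\F'}\|_1$, and the Calder\'on--Zygmund stopping in which $\F'_{\mathrm{big}}$ has $\|N_{\F'_{\mathrm{big}}}\|_\infty\le\mu$ (for $x\in Q_j$ every contributing top contains the dyadic parent of $Q_j$, which meets the complement of $\Omega$ by maximality --- no BMO input is even needed there), $\|N_{\F'_{\mathrm{small}}}\|_1\le \|\F'\|_{\BMO}\,|\Omega|$, and the hypotheses pass to $\F'_{\mathrm{big}}$ because they are assumed for all subcollections.

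The genuine gap is in the terminal bookkeeping, exactly where you anticipated trouble. If you iterate all the way down to $M\sim B\log^2(2+B)$, then at the last rounds $\mu$ is only a bounded multiple of the BMO scale $B\log^2(2+M)$, so the John--Nirenberg factor $\exp(-c\mu/(B\log^2(2+M)))$ is merely a constant, and the error at such a step is of size $\|\F'\|_{\BMO}|\Omega|\lesssim B\log^2(2+M)\cdot A\log^2(2+M)\sim AB\log^4(2+B)$, which is far larger than $AB^\epsilon$; the errors are not ``geometrically decaying'' in that regime, so the sum as you describe it does not close. The repair stays inside your scheme but requires a different stopping height: terminate the induction as soon as $\|N_{\F'}\|_\infty\le \exp(B^{\epsilon/2})$ (or, tracking the John--Nirenberg constant, at $\gtrsim B\log^3(2+B)$). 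At that point the baseline hypothesis alone gives $\|N_{\F'}\|_1\le A\log^2\bigl(2+\exp(B^{\epsilon/2})\bigr)\lesssim AB^{\epsilon}$, while for every step above the threshold one has $\mu/(B\log^2(2+M))$ at least polynomially large in $\log(2+M)$ (indeed enormous once $M\ge \exp(B^{\epsilon/2})$), so each error term is superpolynomially small and the geometric sum over scales of $M$ contributes $O(A)$. With that adjustment your argument is complete; as written, the choice of threshold ``comparable to $B\log^2(2+B)$'' makes the final error terms overwhelm the claimed bound.
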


The next result that we recall is a variant of Proposition 13.1. from \cite{DTT} (see also the remark following it). It asserts that the operators $\tilde\chi_{I_P}^{10}T_{m_P}$, where $\P$ ranges through the tiles in a one dimensional forest, are almost orthogonal, with a logarithmic loss in the $L^{\infty}$ norm of the counting function.

\begin{proposition}\label{nmprop}
Let $\F$ be a forest of one dimensional trees $(\T,\xi_\T,I_\T)$. Let $\P':= \bigcup_{\T \in \F}\bigcup_{P:=I_P\times \omega_P\in \T}P$ be the collection of all the one dimensional tiles in the forest. For each  $P \in \P'$  let $m_P$ be a multiplier adapted to $\omega_P$ of order 2. Then
$$\sum_{P \in \P'} \|\tilde\chi_{I_P}^{10}T_{m_P}(f)\|_{2}^2 \lesssim \log^2(2+\|\sum_{\T \in \F} \chi_{I_\T} \|_{L^\infty}) \| f \|_{2}^2,$$
for each $f\in L^2(\R)$.
\end{proposition}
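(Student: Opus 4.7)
The plan is to mimic the proof of Proposition~13.1 in \cite{DTT}, which establishes the analogous statement in a closely related setting. By duality, it suffices to show that the adjoint operator
$$ U\colon (g_P)_{P \in \P'} \longmapsto \sum_{P \in \P'} T_{m_P}^*\bigl(\tilde\chi_{I_P}^{10}\, g_P\bigr) $$
maps $\ell^2(L^2)$ into $L^2$ with norm at most $\lesssim L := \log\bigl(2 + \|\sum_{\T \in \F}\chi_{I_\T}\|_{\infty}\bigr)$. Expanding $\|Ug\|_2^2$ as a double sum of inner products $\langle T_{m_P}^*(\tilde\chi_{I_P}^{10}g_P),\, T_{m_{P'}}^*(\tilde\chi_{I_{P'}}^{10}g_{P'})\rangle$, one notes that since $m_P$ is supported in $\omega_P$, a pair $(P,P')$ contributes only when $\omega_P \cap \omega_{P'} \ne \emptyset$. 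By the shifted $J$-dyadic structure, such intervals must be nested (or equal), which organizes the pairs by relative scale.

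The diagonal terms $P=P'$ are controlled at once by Plancherel together with $|m_P| \le 1$, yielding the trivial bound $\sum_P \|g_P\|_2^2$. For the off-diagonal pairs, say $\omega_{P'} \subsetneq \omega_P$, I would split the analysis into two regimes. If $P$ and $P'$ lie in the \emph{same} tree $\T$, the lacunarity of $\T$ places $\xi_\T$ outside both $2\omega_P$ and $2\omega_{P'}$ while $\bar\omega_\T$ lies in their intersection; this separation lets one estimate the composition $T_{m_P} T_{m_{P'}}^*$ in a scale-sensitive way. If instead $P \in \T$ and $P' \in \T'$ lie in \emph{different} trees, the strong disjointness condition applied with $\omega_{P'} \subsetneq \omega_P$ forces $R_{\T'} \cap R_P = \emptyset$, hence $I_P \cap I_{P'} = \emptyset$; the polynomial tails of $\tilde\chi_{I_{P'}}^{10}$ then produce a spatial decay factor in $|I_{P'}|/\dist(I_P, I_{P'})$ that is summable in the nested scale.

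In both regimes, enough off-diagonal decay is produced to sustain a Schur-type estimate. Summing over scales and pairs of tiles then reduces the problem to controlling the multiplicity with which distinct trees can contribute at a fixed spatial point and scale, and this is bounded precisely by $\|\sum_{\T \in \F}\chi_{I_\T}\|_{\infty}$. A dyadic slicing of the counting function into super-level sets, combined with a John--Nirenberg-type iteration of the kind carried out in \cite{DTT}, is what converts this $L^\infty$ multiplicity bound into only a logarithmic loss, producing the $\log^2$ factor in the statement.

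The main obstacle I expect is the ``same tree'' off-diagonal case. Since the multipliers $m_P$ are adapted only to order $2$ and the cutoffs $\tilde\chi_{I_P}^{10}$ carry only polynomial decay, the available off-diagonal cancellation is limited; the argument will have to exploit the lacunary separation of $\xi_\T$ from each $\omega_P$ quantitatively enough to guarantee convergence of Schur's test, while keeping track of constants so that the final combination with the forest multiplicity produces only a logarithmic, rather than polynomial, loss in $\|\sum_{\T \in \F}\chi_{I_\T}\|_{\infty}$.
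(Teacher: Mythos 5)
Your framing coincides with the paper's in one respect: the paper offers no self-contained proof of this proposition, it simply points to Proposition 13.1 of \cite{DTT} and asserts that replacing the wave-packet projections $\langle f,\phi_s\rangle\phi_s$ by the operators $\tilde\chi_{I_P}^{10}T_{m_P}$ requires no serious modification, so ``mimic \cite{DTT}'' is indeed the intended route, and your duality/$TT^*$ set-up, the observation that only pairs with $\omega_P\cap\omega_{P'}\neq\emptyset$ (hence nested or equal) survive, and the appeal to strong disjointness are consistent with it. But you have located the difficulty in the wrong place. The ``same tree, nested frequency'' case that you flag as the main obstacle is empty: if $P,P'$ lie in one lacunary tree with $|\omega_{P'}|\le 2^{-J}|\omega_P|$ and $\omega_{P'}\cap\omega_P\neq\emptyset$, then $\xi_\T\in\bar\omega_\T\subseteq\bar\omega_{P'}\subseteq 4000\,\omega_{P'}$ gives $\dist(\xi_\T,\omega_P)\lesssim 2^{-J}|\omega_P|$, contradicting $\xi_\T\notin 2\omega_P$ once $J$ is large. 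So tiles of different scales inside a single lacunary tree have disjoint multiplier supports and those cross terms vanish identically; the only same-tree interactions are equal-scale ones, which are handled by spatial decay.

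The genuine difficulty is the cross-tree case, and there the two claims your sketch rests on do not hold as stated. First, ``enough off-diagonal decay to sustain a Schur-type estimate'' fails: for a fixed $P'\in\T'$ and tiles $P$ with $\omega_P\supsetneq\omega_{P'}$, strong disjointness only forces $I_P$ to avoid the top $I_{\T'}$, so at a single scale $2^{-kJ}|I_{P'}|$ there can be $\sim 2^{kJ}$ admissible tiles (all sharing the one grid interval of that scale containing $\omega_{P'}$, each in its own tree) tiling an interval adjacent to $I_{\T'}$; the natural kernel bound per pair is only $\sim 2^{-kJ/2}$ with no spatial gain, so the Schur row sum is $\gtrsim 2^{kJ/2}$, unbounded, even though $\|\sum_\T\chi_{I_\T}\|_\infty=O(1)$. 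Such configurations must be handled by Cauchy--Schwarz against local $L^2$ quantities (which is what \cite{DTT} does), not by absolute-value kernel sums. Second, and more importantly, the passage from a multiplicity bounded by $\|\sum_\T\chi_{I_\T}\|_\infty$ to a loss of only $\log^2(2+\|\sum_\T\chi_{I_\T}\|_\infty)$ is precisely the content of the proposition, and ``dyadic slicing into super-level sets plus a John--Nirenberg-type iteration'' is not a mechanism that achieves it: John--Nirenberg controls the distribution of a BMO counting function (that is how it is used elsewhere in this paper, e.g.\ in the context of Lemma \ref{cor:7} and in Section \ref{sub:2}), but it does not upgrade a multiplicity-$N$ overlap inside an almost-orthogonality estimate to a logarithmic one. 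In \cite{DTT} the logarithmic factor is extracted inside the almost-orthogonality argument itself, through a careful decomposition of the cross-tree interactions, and a complete proof here must either reproduce that argument or detail the (minor) modifications needed for the variant $\tilde\chi_{I_P}^{10}T_{m_P}$, checking that only order-$2$ adaptedness of $m_P$ and polynomial tails of the cutoffs are used. As written, the decisive quantitative step is missing from your proposal.
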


A standard localization argument also gives the following localized variant of Proposition \ref{nmprop}:
\begin{proposition}\label{nmproplocal}
Under the same hypothesis as above, we have for each dyadic interval $I_0$:
$$\sum_{P \in \P':I_P\subseteq I_0} \|\tilde\chi_{I_P}^{10}T_{m_P}(f)\|_{2}^2 \lesssim \log^2(2+\|\sum_{\T \in \F} \chi_{I_\T} \|_{L^\infty}) \| f\tilde\chi_{I_0}^2\|_{L^2}^2.$$
for each $f\in L^2(\R)$.
\end{proposition}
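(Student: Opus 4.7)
My plan is to derive Proposition \ref{nmproplocal} from Proposition \ref{nmprop} by a standard truncation-and-decay localization argument. First write $f = f_{\mathrm{loc}} + f_{\mathrm{far}}$, with $f_{\mathrm{loc}} := f\chi_{3I_0}$. Observe that the sub-collection $\{P\in\P' : I_P\subseteq I_0\}$ again comes from a forest (restrict each tree of $\F$ to its tiles contained in $I_0$). Applying Proposition \ref{nmprop} to this subforest with input $f_{\mathrm{loc}}$, and using the pointwise bound $\tilde\chi_{I_0}^2\gtrsim 1$ on $3I_0$ (so $\|f_{\mathrm{loc}}\|_2\lesssim \|f\tilde\chi_{I_0}^2\|_2$), yields
\[
\sum_{P\in\P',\,I_P\subseteq I_0}\|\tilde\chi_{I_P}^{10}T_{m_P}(f_{\mathrm{loc}})\|_2^2\lesssim\log^2\bigl(2+\|\textstyle\sum_{\T\in\F}\chi_{I_\T}\|_\infty\bigr)\|f\tilde\chi_{I_0}^2\|_2^2.
\]

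For the nonlocal piece, decompose $f_{\mathrm{far}}=\sum_{k\geq 1}f_k$ with $f_k$ supported in the annular shell $A_k := 2^{k+1}I_0\setminus 2^{k}I_0$. Since $\tilde\chi_{I_0}^4\sim 2^{-4k}$ on $A_k$, one has $\|f\tilde\chi_{I_0}^2\|_2^2\gtrsim\sum_{k\geq 1}2^{-4k}\|f_k\|_2^2$, so (after Minkowski in $k$ combined with Cauchy--Schwarz against a slack weight $2^{-\delta k}$) the task reduces to a per-shell bound
\[
\sum_{P\in\P',\,I_P\subseteq I_0}\|\tilde\chi_{I_P}^{10}T_{m_P}(f_k)\|_2^2\lesssim\log^2(\cdots)\,2^{-(4+\delta)k}\|f_k\|_2^2
\]
for some $\delta>0$. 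This estimate is established by either a direct pointwise analysis or a $TT^*$ argument on the operator $f\mapsto(\tilde\chi_{I_P}^{10}T_{m_P}f)_P$: for $x$ where $\tilde\chi_{I_P}^{10}(x)$ is non-negligible, $x$ is essentially within $O(|I_P|)$ of $I_P\subseteq I_0$, while $y\in A_k$ forces $|x-y|\gtrsim 2^k|I_0|$; combining the kernel bound $|K_{m_P}(x-y)|\lesssim|I_P|^{-1}(1+|x-y|/|I_P|)^{-2}$ with the spatial concentration of $\tilde\chi_{I_P}^{10}$ produces a per-tile estimate of the form $(|I_P|/|I_0|)^\alpha\,2^{-\beta k}\|f_k\|_2^2$. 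One then sums in $P$ via the counting-function bound $\sum_{P,\,I_P\subseteq I_0}(|I_P|/|I_0|)^\alpha\lesssim\|\sum_\T\chi_{I_\T}\|_\infty$, valid for any $\alpha>1$, which reflects the forest structure.

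The main technical obstacle is extracting the per-shell exponent $4+\delta$. A naive Cauchy--Schwarz using only the order-$2$ kernel decay together with $\|\tilde\chi_{I_P}^{10}\|_2\sim |I_P|^{1/2}$ produces per-tile bound $(|I_P|/|I_0|)^3 2^{-3k}$, hence total $2^{-3k}$ after summing in $P$, which is borderline insufficient against the $2^{-4k}$ weight from $\tilde\chi_{I_0}^4$. To close the gap, one must fully exploit the order-$10$ polynomial decay of the bumps $\tilde\chi_{I_P}^{10}$ via a finer dyadic decomposition of $x$ into annuli around $I_P$ (gaining factors $2^{-10n}$ at scale $2^n|I_P|$), together with the geometric summability across tile scales $|I_P|=2^{-jJ}|I_0|$ inside $I_0$; alternatively, one may appeal to the $TT^*$ localization inherent in the proof of Proposition \ref{nmprop} in \cite{DTT}, in which the Schwartz kernel of $\sum_P T_{m_P}^*\tilde\chi_{I_P}^{20}T_{m_P}$ is shown to decay rapidly in $|x-x'|/|I_0|$ when both $x,x'$ are outside $I_0$.
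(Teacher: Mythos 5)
The local piece of your argument is fine: the subcollection $\{P\in\P':I_P\subseteq I_0\}$ is again a forest (strong disjointness passes to subcollections), $\tilde\chi_{I_0}\sim 1$ on $3I_0$, and Proposition \ref{nmprop} applies. The far-field piece, however, has two genuine gaps, one of which you flag and one you do not. The one you miss: your summation in $P$ via the counting-function bound $\sum_{P:I_P\subseteq I_0}(|I_P|/|I_0|)^{\alpha}\lesssim\|\sum_{\T}\chi_{I_\T}\|_{\infty}$ produces a loss that is \emph{linear} in $N':=\|\sum_{\T}\chi_{I_\T}\|_{\infty}$, not $\log^2(2+N')$; since this loss is uniform in $k$, no decay in $k$ can absorb it, and your scheme would only give the far part a bound of the form $N'\|f\tilde\chi_{I_0}^2\|_2^2$, which is weaker than the claimed estimate when $N'$ is large. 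To avoid it one must exploit, also on the far shells, the orthogonality that Proposition \ref{nmprop} encodes (frequency separation of same-scale tiles and strong disjointness of the trees), i.e.\ rerun the weighted almost-orthogonality argument rather than estimate tile by tile. The one you flag but do not close: the extra per-shell decay $\delta$ cannot come from the order-$10$ decay of $\tilde\chi_{I_P}^{10}$, because the dominant contribution is from $x$ within $O(|I_P|)$ of $I_P$, where that bump is $\sim 1$; there the $k$-decay is governed solely by the kernels of $T_{m_P}$, and since Proposition \ref{nmprop} only assumes $m_P$ adapted of order $2$, these decay like $|I_P|^{-1}(1+|z|/|I_P|)^{-2}$. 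The best per-shell bound this permits (using $\||z|^2K_{m_P}\|_2\lesssim\|\partial^2 m_P\|_2$ instead of your pointwise Cauchy--Schwarz, which only gives $2^{-3k}$) is of size $2^{-4k}(|I_P|/|I_0|)^4\|f_k\|_2^2$, exactly matching the weight $\tilde\chi_{I_0}^4$ with no summable slack, so the triangle inequality over shells is borderline divergent.

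Your fallback, an appeal to ``the $TT^*$ localization inherent in the proof of Proposition \ref{nmprop} in \cite{DTT}'', is not an argument but a restatement of the assertion to be proved; it is in effect what the paper itself does, since the paper offers no shell decomposition at all and simply states that the localized inequality follows by a standard localization of the proof in \cite{DTT} (with $\langle f,\phi_s\rangle\phi_s$ replaced by $\tilde\chi_{I_P}^{10}T_{m_P}f$), the point being that the weight $\tilde\chi_{I_0}^2$ is carried \emph{inside} the almost-orthogonality argument, where it interacts with strong disjointness, rather than imposed from outside the global inequality as in your reduction. As written, then, your proposal does not constitute a proof: the reduction to the per-shell bound with constant $\log^2(2+N')2^{-(4+\delta)k}$ is reasonable, but neither of your two proposed routes establishes that bound.
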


The above results have been proved in \cite{DTT} with the phase-space projections $\tilde\chi_{I_P}^{10}T_{m_P}(f)$ replaced by their variant $\langle f,\phi_s\rangle\phi_s$. The proof of both Propositions \ref{nmprop} and \ref{nmproplocal} runs with no serious modifications. We leave the details to the reader.

We will also need the following consequence of Lemma 10.4 in \cite{DTT}
\begin{lemma}
\label{controlonexpandedcountingfunction}
Let $\B$ be a collection of dyadic squares and $n\ge 0$. Then we can split $\B$ as $\B^{\sharp}\cup \B^{\flat}$ such that
$$\|\sum_{R\in\B^{\sharp}}\chi_{2^nR}\|_{\infty}\lesssim 2^{4n}\|\sum_{R\in\B}\chi_{R}\|_{\infty}^3$$
and 
$$\sum_{R\in\B^{\flat}}|R|\le \frac{1}{2}\sum_{R\in\B}|R|.$$
\end{lemma}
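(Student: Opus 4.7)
Set $L:=\|\sum_{R\in\B}\chi_R\|_{\infty}$, $f:=\sum_R\chi_R$, $h:=\sum_R\chi_{2^nR}$, and fix the threshold $T:=C_0\cdot 2^{4n}L^3$ for a sufficiently large absolute constant $C_0$. The first ingredient is the $L^2$ estimate $\|h\|_2^2\lesssim 4^{2n}L\|f\|_1$, which I obtain by expanding $\|h\|_2^2=\sum_{R,R'}|2^nR\cap 2^nR'|$ and treating the symmetric contribution $|R|\le|R'|$. For fixed $R'$ of sidelength $2^{k'}$, every $R$ with $2^nR\cap 2^nR'\ne\emptyset$ lies in a ball of radius $O(2^{n+k'})$ centered at $R'$; the pointwise density bound $\sum_{R\subset B}|R|\le L|B|$ then yields $\sum_R|R|\lesssim L\cdot 4^{n+k'}$. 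Multiplying by $|2^nR\cap 2^nR'|\le 4^n|R|$ and summing over $R'$ proves the claim.

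Next set $E:=\{x:h(x)>T\}$. Chebyshev combined with the $L^2$ estimate yields $|E|\lesssim \|f\|_1/(C_0^2\cdot 4^{2n}L^5)$. Define
\[
\B^\flat:=\{R\in\B:2^nR\cap E\ne\emptyset\},\qquad \B^\sharp:=\B\setminus\B^\flat.
\]
For $R\in\B^\sharp$ one has $2^nR\subset E^c$, so $\sum_{R\in\B^\sharp}\chi_{2^nR}$ is supported in $\{h\le T\}$ and is pointwise dominated there by $h\le T$. This establishes the first inequality of the lemma.

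For the second inequality I plan to cover $E$ by pairwise disjoint dyadic cubes $\{Q_j\}$ via a Whitney (or dyadic Calder\'on--Zygmund) decomposition, with $\sum|Q_j|=|E|$ and $\dist(Q_j,E^c)$ comparable to the sidelength of $Q_j$, and to assign to each $R\in\B^\flat$ some $Q_j$ meeting $2^nR$. I then split into two regimes depending on whether $|Q_j|\ge 4^{n+k}$ or $|Q_j|<4^{n+k}$, where $|R|=4^k$. In the first regime geometry forces $R\subset C\cdot Q_j$, so the density bound yields total contribution $\lesssim L\sum_j|Q_j|=L|E|$. In the second regime, only $O(4^n)$ scale-$k$ dyadic squares can fit within distance $O(2^{n+k})$ of a given $Q_j$; regrouping these contributions by the dyadic ancestor of $Q_j$ at scale $n+k$ rather than by $Q_j$ itself, and using the disjointness of dyadic cubes at a common scale to prevent further accumulation, yields an additional factor of $4^n$. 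Both regimes combined give
\[
\sum_{R\in\B^\flat}|R|\lesssim L\cdot 4^n|E|\lesssim \|f\|_1/(C_0^2\cdot 4^n L^4)\le\tfrac12\sum_R|R|
\]
once $C_0$ is chosen large. The main obstacle I expect lies in the second regime: since $\B$ may contain squares across an unbounded range of scales, a naive thickening of $E$ would introduce a logarithmic-in-scales factor, and one must exploit the dyadic regularity of the Whitney cubes (by exchanging the order of summation between scales and Whitney cubes) to avoid this blow-up.
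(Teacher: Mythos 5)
The first half of your argument is correct: the estimate $\|h\|_2^2\lesssim 2^{4n}L\|f\|_1$ for $h=\sum_{R\in\B}\chi_{2^nR}$, the Chebyshev bound for $E=\{h>T\}$ with $T=C_02^{4n}L^3$, and the pointwise domination $\sum_{R\in\B^\sharp}\chi_{2^nR}\le T$ are all fine. The gap is in the second half, and it is not the summation technicality you flag at the end: with your definition $\B^\flat:=\{R\in\B:\,2^nR\cap E\neq\emptyset\}$ the mass bound $\sum_{R\in\B^\flat}|R|\le\frac{1}{2}\sum_{R\in\B}|R|$ is false in general, so no reorganization of the Whitney/regime-2 sum can complete the proof. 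Concretely, fix $n$ large and an integer $S$, and let $\B$ consist, for each $k=1,\dots,S$, of all dyadic squares $R$ of sidelength $2^{-k}$ whose centers satisfy $2^{n-k-2}\le\|c(R)\|_\infty\le 2^{n-k-1}$. Every such $R$ has $0\in 2^nR$. Squares of a fixed scale are pairwise disjoint, and a point at $\ell^\infty$-distance $d$ from the origin can lie in a scale-$k$ square only if $2^{n-k-3}\le d\le 2^{n-k}$, so at most four scales contribute at any point and $L=O(1)$; on the other hand each scale contributes $\gtrsim 4^{n-1}$ squares, so $h(0)\gtrsim S\,4^{n}$. Choosing $S$ of the order $C_0 4^n$ forces $h(0)>T$, hence $0\in E$, hence every $R\in\B$ lies in $\B^\flat$, i.e.\ $\B^\flat=\B$ and the mass condition fails outright. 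The same example kills the intermediate claim $\sum_{R\in\B^\flat}|R|\lesssim 4^nL|E|$ by a factor growing like $4^n$: here $|E|\le\|h\|_1/T\lesssim C_0^{-1}$ while $\sum_{R\in\B^\flat}|R|=\|f\|_1\gtrsim 4^{n}$, so this is not a matter of constants.

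The example also shows why the selection rule itself, not its execution, is the problem. The lemma does hold for this $\B$ (keep the $O(4^n)$ coarsest scales and discard the rest, whose total area is negligible), so a correct split must discriminate between scales; the criterion ``$2^nR$ meets a fixed small exceptional set'' cannot do this, because a set of arbitrarily small measure can be met by the dilates of squares of every scale carrying essentially all of the mass. A workable variant is to discard only those $R$ that are themselves contained in a suitable (roughly $4^n$-fold, maximal-function) thickening of the high-level set; that makes the mass bound easy, exactly as in your regime 1, but then the $L^\infty$ bound for the dilated counting function of the retained squares no longer follows from the trivial domination by $h\le T$ and requires a genuine argument. That missing argument is precisely the content of Lemma 10.4 of \cite{DTT}: note the paper itself does not prove the present lemma but quotes it as a consequence of that result, so there is no internal proof to compare against, and your write-up would need to either reproduce the \cite{DTT} argument or supply an equivalent scale-sensitive (e.g.\ greedy, scale-by-scale) selection.
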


We have now all the tools ready to prove Proposition \ref{selection}.

We first consider the case $i\in\{1,2\}$. Fix such an $i$.

To reduce the maximal size $\size_i^*(\P_0)$ to at most $\lambda$, we need to eliminate all overlapping and lacunary trees $\T$ with $\size_i(\T)$ exceeding  $\lambda$. We will do this in a minimal manner, so that we achieve the desired control over the counting function of the tree tops.

We first take care of the lacunary size. For each $n\ge 0$ and each square $R$ define
$$\tilde\chi_{R,n}:=\tilde\chi_{R}\chi_{2^{n+1}R\setminus 2^nR}.$$
Call a lacunary tree $(\T,\xi_\T,R_\T)$ upper lacunary if $\xi_\T>c(\omega_P)$ for each $P\in\T$ and lower lacunary if $\xi_\T<c(\omega_P)$ for each $P\in\T$.

To guarantee that after the elimination process stops the lacunary size is no greater that $\lambda$, it suffices to make sure that there are no upper or lower lacunary trees $(\T,\xi_\T,R_\T)$ left in the collection, no $n\ge 0$  and no $m_{P}$ which is adapted to $\omega_P$ of order $N^2$ such that

\be{selbadtreelacunary}
\frac{1}{|R_\T|}\sum_{P\in\T}\|\tilde{\chi}_{R_P,n}^{10}(x,y)T_{m_P}(F_i(\cdot,y))(x)\|_{L^2_{x,y}}^2\ge 2^{-n-10}\lambda^2.
\end{equation}

To achieve this, we eliminate lacunary trees according to the following algorithm:
\begin{itemize}
\item Step 0: Set $n=0$, $\P^{*}:=\P_0$, $\F_{bad,n}=\emptyset$.
\item Step 1: Select a "bad" upper lacunary tree $(\T,\xi_\T,R_\T)\in \P^*$, that is a tree satisfying \eqref{selbadtreelacunary}. We also make sure that  $\xi_\T$ is minimal over all trees with this property\footnote{If there are more trees with the same $\xi_\T$ which qualify to be selected at a certain stage, we select any one that maximizes $|R_\T|$}. Put this tree in the collection $\F_{bad,n}$. If no such tree is available, go to Step 4.
\item Step 2: Construct the collection $\U((\T,\xi_\T,R_\T),n)$ to consist of the following convex trees: for each $J$- dyadic square $R\subset 2^{n+3}R_\T$ with sidelength equal to that of $R_\T$ (and there are $2^{2n+6}$ of them) we let $\T_R$ be the maximal tree with top data $(\xi_\T,R)$; the collection $\U((\T,\xi_\T,R_\T),n)$ will consist of all these (at most $2^{2n+6}$ trees). Eliminate all these convex trees from $\P^{*}$, that is, reset $\P^{*}:=\P^{*}\setminus \bigcup_{\T'\in \U((\T,\xi_\T,R_\T),n)}\T'$
\item Step 3: Go to Step 1
\item Step 4: Reset $n:=n+1$, $\F_{bad,n}=\emptyset$. and go to Step 1. 
\end{itemize}
While the algorithm runs forever, it will produce no bad trees for large enough $n$, since $\P_0$ is finite. After we are done with eliminating the upper trees, if $\P^{*}$ is nonempty we repeat the algorithm for lower trees (the only difference is that in Step 1, $\xi_\T$ will be maximal). 

It suffices to prove that for each $n\ge 0$ and each $\epsilon>0$
\be{besselforlac8}
\sum_{\T\in\F_{bad,n}}|R_\T|\lesssim_{\epsilon} 2^{-3n}\lambda^{-2-\epsilon}\|F_i\|_2^2.
\end{equation}

We will prove this by using the results about one dimensional trees from the beginning of this section.

By Lemma \ref{cor:7}, to achieve \eqref{besselforlac8}, it suffices for each $\F'\subseteq \F_{bad,n}$ to prove the following
\be{besselforlac8subfoest}
\sum_{\T\in\F'}|R_\T|\lesssim  2^{-3n}\lambda^{-2}\|F_i\|_2^2\log^2(2+\|N_{\F'}\|_{L^{\infty}}),
\end{equation}
and 
\be{besselforlac8subfoestloc}
\frac{1}{|R|}\sum_{\T\in\F':R_\T\subset R}|R_\T|\lesssim  2^{-3n}\lambda^{-2}\log^2(2+\|N_{\F'}\|_{L^{\infty}})
\end{equation}
for each dyadic $R$.

We only prove \eqref{besselforlac8subfoest}, then \eqref{besselforlac8subfoestloc} will follow by localizing the techniques (In particular, appealing to Proposition \ref{nmproplocal} rather than Proposition \ref{nmprop}).

We apply Lemma \ref{controlonexpandedcountingfunction} to the collection $\B:=\{R_\T:\T\in\F'\}$ and to $n+1$, and denote by ${\F'}^{\sharp}$ and ${\F'}^{\flat}$ the two collections of trees that arise by this application. It suffices to prove
\be{besselforlac8subfoestelab}
\sum_{\T\in{\F'}^{\sharp}}|R_\T|\lesssim  2^{-3n}\lambda^{-2}\|F_i\|_2^2\log^2(2+\|N_{\F'}\|_{L^{\infty}}).
\end{equation}

Denote $N':=\|N_{\F'}\|_{L^{\infty}}$. We certainly have
\be{jhcdlhUDYEUUIDLUIW}
\|\sum_{\T\in{\F'}^{\sharp}}\chi_{2^{n+1}R_\T}\|_{\infty}\lesssim 2^{4n}{N'}^3.
\end{equation}

For each $y\in\R$ and each $\T\in{\F'}^{\sharp} $ denote by $\T_y$ the subtree consisting of all $P\in\T$ such that $y\in 2^{n+1}J_P$. Split the collection of multi-tiles $\bigcup_{\T\in{\F'}^{\sharp}}\T_y$ into at most $2^{n+2}$ reliable subcollections. Denote them with $\U_{i,y}$, $i\in I_y$ with  $\#I_y\le 2^{n+2}$. Thus, each $\U_{i,y}$ will consist of the union of reliable trees, each of which is a subtree of one of the trees\footnote{Each $\T\in{\F'}^{\sharp} $ will provide at most one such subtree for each given $\U_{i,y}$} $\T_y$, with $\T\in{\F'}^{\sharp} $.

We claim that for each given  $y$ and $i\in I_{y}$, the collection of the one dimensional trees induced by the trees in $\U_{i,y}$ will form a (one dimensional) forest. 

To see this, note first that the induced one dimensional trees are lacunary, since the subtree of a lacunary tree is itself lacunary.

Let now $(\T_{rel}, \xi_\T,R_\T)$ and $(\T'_{rel},\xi_{\T'},R_{\T'})$ be the subtrees of $(\T,\xi_\T,R_\T)$ and $(\T',\xi_{\T'},R_{\T'})$ that are in $\U_{i,y}$. The fact that their induced one dimensional trees are disjoint (as collections of tiles) follows from the fact that $\U_{i,y}$ is reliable.

The proof of the fact that  the induced one dimensional trees $(\T_{rel}, \xi_\T,I_\T)$ and $(\T'_{rel},\xi_{\T'},I_{\T'})$  are strongly disjoint goes by contradiction. Assume $P \in \T_{rel}$, $P' \in \T'_{rel}$ are such that $\omega_P \subsetneq \omega_{P'}$ and $I_\T \cap I_{P'}=\emptyset$. The first condition, the upper lacunarity of both $(\T,\xi_\T,R_\T)$ and $(\T',\xi_{\T'},R_{\T'})$, and the fact that $|\omega_P|\le 2^{-J}|\omega_{P'}|$ easily implies that the tree $\T$ was selected before $\T'$. It also follows that $\bar\omega_\T\subset \bar\omega_{P'}$. On the other hand, $I_\T \cap I_{P'}=\emptyset$ together with the fact that $y\in 2^{n+1}J_\T\cap 2^{n+1}J_{P'}$ implies that $P'$ would have qualified to be eliminated when $\T$ was eliminated, that is before the selection of $\T'$ (more precisely, $P'$ is in one of the trees in $\U((\T,\xi_T,R_\T),n)$). The contradiction is immediate.

Since for each $y$ and $i\in I_y$, each $\T\in {\F'}^{\sharp}$ contributes with at most one subtree to $\U_{i,y}$, and since $y\in 2^{n+1}J_\T$ for each such $\T$, it follows by \eqref{jhcdlhUDYEUUIDLUIW} that the counting function for the one dimensional forest induced by $\U_{i,y}$ obeys the bound\footnote{This holds for a.e. $y$}
\be{jhcdlhUDYEUUIDLUIWonedim}
\|\sum_{\T\in{\F'}^{\sharp}\atop{\T\;contributes\;to\;\U_{i,y}}}\chi_{I_\T}(x)\|_{L^\infty(x)}\lesssim 2^{4n}{N'}^3.
\end{equation}

By \eqref{selbadtreelacunary} we have
\begin{align*}
%\label{besselforlac899865}
\sum_{\T\in{\F'}^{\sharp}}|R_\T|&\lesssim 2^{-19n}\lambda^{-2}\int_{y\in\R}\sum_{\T\in {\F'}^{\sharp}}\sum_{P\in\T:y\in 2^{n+1}J_P}\|\tilde{\chi}_{I_P}^{10}(x,y)T_{m_P}(F_i(\cdot,y))(x)\|_{L^2_{x}}^2dy\\&= 2^{-19n}\lambda^{-2}\int_{y\in\R}\sum_{\T\in {\F'}^{\sharp}}\sum_{P\in\T_y}\|\tilde{\chi}_{I_P}^{10}(x,y)T_{m_P}(F_i(\cdot,y))(x)\|_{L^2_{x}}^2dy\\&= 2^{-19n}\lambda^{-2}\int_{y\in\R}\sum_{i\in I_y}\sum_{P\in \U_{i,y}}\|\tilde{\chi}_{I_P}^{10}(x,y)T_{m_P}(F_i(\cdot,y))(x)\|_{L^2_{x}}^2dy
\end{align*}

Using the fact that for each $y$ and $i$, $\U_{i,y}$ is a one dimensional forest with counting function estimate like in \eqref{jhcdlhUDYEUUIDLUIWonedim}, it 
follows by Proposition \ref{nmprop} that the expression above can further be bounded by 
$$
2^{-18n}\lambda^{-2}\int_{y\in\R}\|F_i(x,y)\|_{L^2_{x}}^2\log^2(2+2^{4n}{N'}^3)dy\lesssim 2^{-17n}\lambda^{-2}\|F_i(x,y)\|_{L^2_{x,y}}^2\log^2(2+N').
$$
This ends the proof of \eqref{besselforlac8subfoest}, and thus of \eqref{besselforlac8}.

We next take care of the overlapping size. The argument is essentially the same as before, with two differences: a simplification arises due to the fact that the contribution to each tree arises only from one tile-like boxes, namely the top of the tree $R_\T\times \omega_{\T}$; there is however also a technical complication to our argument arising from the fact that the boxes $R_\T\times \omega_{\T}$ are not tiles, in general. In particular, $\omega_\T$ is not in general an element of the grid $\D_0$. We explain how to overcome this technicality below.

Recall that $\P^{*}$ is what is left of the initial $\P_0$, after the algorithm described above was performed (for both upper and lower lacunary trees). Note that $\P^{*}$ is convex.

 For each overlapping tree $(\T,\xi_\T,R_\T)$ in $\P^{*}$, each $v\in 10\omega_\T$ and each $s\ge 0$ define
$$\omega_{\T,s,v}^{+}:=\{v+2^{-s}10|\omega_\T|,v+2^{2-s}10|\omega_\T| \}.$$ 
$$\omega_{\T,s,v}^{-}:=\{v-2^{2-s}10|\omega_\T|,v-2^{-s}10|\omega_\T| \}.$$
Note that $10\omega_\T\setminus\{v\}\subset \bigcup_{s\ge 0}(\omega_{\T,s,v}^{+}\cup\omega_{\T,s,v}^{-}),$ and that each $m_\T$ adapted to $10\omega_\T$ which vanishes at $v\in 10\omega_\T$ can be written as 
$$m_\T=\sum_{s\ge 0}2^{-s}m_{\T,s,v}^{+}+\sum_{s\ge 0}2^{-s}m_{\T,s,v}^{-},$$
where $m_{\T,s,v}^{+}$ is adapted to $\omega_{\T,s,v}^{+}$ and supported in $10\omega_\T$ while $m_{\T,s,v}^{-}$ is adapted to $\omega_{\T,s,v}^{-}$ and supported in $10\omega_\T$.

It suffices to guarantee that after the elimination process ends we are left with no overlapping trees $(\T,\xi_\T,R_\T)$, no $s\ge 0$, no $n\ge 0$, no $v\in 10\omega_\T$ and no $m_{\T,s,v}$ which is  either adapted to $\omega_{\T,s,v}^{-}$ and supported in $10\omega_\T$ or adapted to $\omega_{\T,s,v}^{+}$ and supported in $10\omega_\T$ such that

\be{selbadtreeoverlapping}
\frac{1}{|R_\T|^{1/2}}\|\tilde{\chi}_{R_\T,n}^{10}(x,y)T_{m_{\T,s,v}}(F_i(\cdot,y))(x)\|_{L^2_{x,y}}\ge 2^{-n-10}\lambda.
\end{equation}

The selection process goes as follows: we will run the following algorithm for each $s\ge 0$. We first run it for $s=0$, and then we increment the value of $s$ and run the algorithm again.

\begin{itemize}
\item Step 0: Set $n=0$, $\P^{**}:=\P^{*}$ and $\F_{bad,n,s,-}=\emptyset$.
\item Step 1: Select a ``bad'' tree  $(\T,\xi_\T,R_\T)$ in $\P^{**}$, that is an overlapping  tree satisfying \eqref{selbadtreeoverlapping} for some $v=v_\T\in 10\omega_\T$ and some $m_{\T,s,v}$ which is adapted to $\omega_{\T,s,v}^{-}$ and supported in $10\omega_\T$. Moreover, we select the tree with minimal $v_\T$.
 Put this tree in the collection $\F_{bad,n,s,-}$. If no such tree is available, go to Step 4.
\item Step 2: Construct the collection $\U((\T,\xi_\T,R_\T),n,s)$ to consist of the following convex trees: for each $J$- dyadic square $R\subset 2^{n+3}R_\T$ with sidelength equal to that of $R_\T$ (and there are $2^{2n+6}$ of them) we let $\T_R$ be the maximal tree with top data $(\xi_\T,R)$; the collection $\U((\T,\xi_\T,R_\T),n)$ will consist of all these (at most $2^{2n+6}$ trees). Eliminate all these convex trees from $\P^{**}$, that is, reset $\P^{**}:=\P^{**}\setminus \bigcup_{\T'\in \U((\T,\xi_\T,R_\T),n)}\T'$
\item Step 3: Go to Step 1
\item Step 4: Reset $n:=n+1$, $\F_{bad,n,s,-}=\emptyset$,  and go to Step 1. 
\end{itemize}
As before, while the algorithm runs forever for each given $s$, it will produce no bad trees for large enough $n$, since $\P^{**}$ is finite. After we are done with eliminating the trees for a given $s$, if $\P^{*}$ is nonempty, we repeat the algorithm with $+$ replacing $-$ and the same $s$ (the only difference is that in Step 1, $v_\T$ will be maximal). We then increment $s$ and repeat the above procedure.

We end up with the collections $(\F_{bad,n,s,-})_{n,s\ge 0}$ and $(\F_{bad,n,s,+})_{n,s\ge 0}$ of trees.

 The Bessel inequality for the selected trees will follow by an argument very similar to the one above for lacunary trees, and from the following observation:

If $(\T,\xi_\T,R_\T)$ and $(\T',\xi_{\T'},R_{\T'})$ are trees in $\F_{bad,n,s,-}$ for given $s,n$, then the boxes $\omega_{\T,s,v_\T}^{-}\times 2^{n+1}R_\T$ and  $\omega_{\T',s,v_{\T'}}^{-}\times 2^{n+1}R_{\T'}$ are disjoint.

The proof of the above goes by contradiction. Assume the two boxes intersect. Without loss of generality we may assume $|R_{\T'}|\le |R_\T|$. Let $P\in\T$, $P'\in\T'$ such that\footnote{Such $P$ and $P'$ must exist since trees are by definition non-empty} $\xi_\T\in 2\omega_P$ and $\xi_{\T'}\in 2\omega_{P'}$. We distinguish two cases:

The first possibility is that $|R_{\T'}|=|R_\T|$. Since $|\xi_\T-\xi_{\T'}|\le 20 \min\{|\omega_P|,|\omega_{P'}|\}$ and due to Remark \ref{moreinfoforoverlap}, it follows that $\bar{\omega}_\T\subset\bar{\omega}_{P'}$ and $\bar{\omega}_{\T'}\subset\bar{\omega}_{P}$. On the other hand, we see that since $2^{n+1}R_{\T}\cap 2^{n+1}R_{\T'}\not=\emptyset$ we have that $R_{P'}\subseteq R_{\T'}\subset 2^{n+3}R_\T$ and $R_{P}\subseteq R_{\T}\subset 2^{n+3}R_{\T'}$. These two facts imply that if $\T$  was selected first, then $P'$ would have qualified to be eliminated at that stage, and hence it would have not been available when $\T'$ was selected. The symmetric statement also holds, and the contradiction arises.

The second possibility is that $|R_{\T'}|<|R_\T|$. Since $\omega_{\T,s,v_\T}^{-}\cap \omega_{\T',s,v_\T'}^{-}\not=\emptyset$ it easily follows that $\T$ was selected first. By reasoning as above and by using the fact that $|\omega_\T|\le 2^{-J}|\omega_{P'}|$ it follows that $\bar{\omega}_\T\subset\bar{\omega}_{P'}$. On the other hand, we see that since $2^nR_{\T}\cap 2^nR_{\T'}\not=\emptyset$ we have that $R_{P'}\subseteq R_{\T'}\subset 2^{n+2}R_\T$, which means $P'$ should have been eliminated at the same stage $\T$ was eliminated. The contradiction arises again. 

This ends the proof of  Proposition \ref{selection} in the case $i\in\{1,2\}$. The case $i=3$ is entirely classical. We are now dealing with two dimensional trees and two dimensional sizes that generalize naturally their corresponding one dimensional counterparts. In short, we successively eliminate maximal trees (with no distinction between lacunary and overlapping this time, no minimality assumptions on $\xi_\T$). The fact that these trees will form a two dimensional forest (in particular they are 3-lacunary) follows from Remark \ref{3lacanyways} and from the fact that each selected tree is maximal. We omit the details.

%%%%%%%%%%%%%%%%%%%%%%%%%%%%%5
\subsubsection{Phase-space projections}
\label{sec:7}
Throughout this section we will assume $(\T,\xi_\T,R_\T)$ is a convex tree with $|R_T|=1$, so that $j_\T=0$, and $\xi_\T=0$. We also work with $F_i$ be as in \eqref{supbound}. We will consider $p_i>2$ and denote by $\theta_i=\frac{2}{p_i}$ for $i\in\{1,2\}$, while  $\theta_3$ will be an arbitrary number in the interval $[0,1]$, independent of $p_1,p_2,p_3$. Our goal is to construct phase-space projections associated with each function $F_i$ and the tree $\T$, and to prove  Proposition \ref{phase-space}. In doing so, we follow the terminology and approach from \cite{MTT8}.

We note that
 $\tilde \chi_{j}$ has Fourier support in the region $\{(\xi,\eta):  |\xi| \le 2^{jJ-J} \}$ while $\tilde{\tilde{\chi_{j}}}$ in the region $\{(\xi,\eta):  |\xi|,|\eta| \le 2^{Jj-J} \}$.  

Let $w$ be a positive function on $\R$ and $r > 0$ be a number.  We say that $w$ is \emph{essentially constant at scale $r$} if there is a constant $C=O(1)$ such that
\be{essconst}
C^{-1}(1 + \frac{|x-y|}{r})^{-100} \le \frac{w(x)}{w(y)}
\le C (1 + \frac{|x-y|}{r})^{100}
\end{equation}
for all $x, y \in \R$.  In particular, the weights $\tilde \chi_I^\alpha$ are essentially constant at  scale $|I|$ or less when $|\alpha| \leq 100$.

We shall need the following weighted version of Bernstein's inequality, see \cite{MTT8}.

\begin{lemma}\label{bernstein}
Let $f:\R\to\C$ be a function whose Fourier transform is supported on an interval $\omega$ of width $O(2^{jJ})$ for some integer $j$. Then we have
$$ \| w f \|_\infty \lesssim 2^{ jJ/2} \| w f \|_2$$
for all weights $w$ which are essentially constant at scale $2^{-jJ}$. The implicit constant in the above inequality only depends on the constant $C$ from \eqref{essconst}.
\end{lemma}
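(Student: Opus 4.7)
The plan is to prove this by reproducing $f$ as a convolution against a smooth Schwartz bump at the appropriate scale and exploiting the essential constancy of $w$ to transfer the weight across the convolution.

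First, I would reduce to the case where $\omega$ is centered at the origin. By multiplying $f$ by a unimodular character $e^{-i\xi_0 x}$ (where $\xi_0$ is the center of $\omega$), we obtain a function $g$ with $|wg|=|wf|$ pointwise, whose Fourier transform is supported in $[-C 2^{jJ}, C 2^{jJ}]$. Choose once and for all a Schwartz function $\phi$ whose Fourier transform equals $1$ on this interval and which satisfies $|\phi(x)|\lesssim (1+|x|)^{-200}$; set $\phi_j(x):=2^{jJ}\phi(2^{jJ}x)$. Then $g = g * \phi_j$ pointwise.

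The key step is then the weighted pointwise bound. For any $x$, writing $g(x) = \int g(y)\phi_j(x-y)\,dy$, I would estimate
\begin{align*}
w(x)|g(x)|
&\le \int \frac{w(x)}{w(y)}\, w(y) |g(y)|\, |\phi_j(x-y)|\, dy \\
&\lesssim \int \bigl(1+2^{jJ}|x-y|\bigr)^{100} w(y)|g(y)|\, |\phi_j(x-y)|\, dy,
\end{align*}
where the second inequality uses the essential-constancy hypothesis \eqref{essconst} at scale $2^{-jJ}$. The rapid decay of $\phi$ now dominates the polynomial loss: $\bigl(1+2^{jJ}|x-y|\bigr)^{100}|\phi_j(x-y)| \lesssim 2^{jJ}\bigl(1+2^{jJ}|x-y|\bigr)^{-100}$. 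Applying Cauchy--Schwarz in $y$ yields
\begin{align*}
w(x)|g(x)|
&\lesssim 2^{jJ}\left(\int \bigl(1+2^{jJ}|x-y|\bigr)^{-200} dy\right)^{1/2}\!\!\|wg\|_2 \\
&\lesssim 2^{jJ}\cdot 2^{-jJ/2}\|wg\|_2 \;=\; 2^{jJ/2}\|wg\|_2.
\end{align*}
Taking the supremum in $x$ and using $|wg|=|wf|$ gives the claim.

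There is no real obstacle here; the only thing to be careful about is the interplay between the exponent $100$ appearing in the essential-constancy definition and the decay rate of $\phi$. Since we are free to choose $\phi$ Schwartz, we can match and beat the polynomial factor by any desired margin, which is why the implicit constant depends only on the constant $C$ in \eqref{essconst}. The whole argument is a standard weighted analogue of the usual reproducing-formula proof of Bernstein's inequality.
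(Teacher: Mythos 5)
Your proof is correct and follows essentially the same route as the paper's: reproduce $f$ by convolution with a kernel adapted to (a dilate of) $\omega$, use its rapid decay to absorb the polynomial loss from \eqref{essconst}, and finish with Cauchy--Schwarz. Your extra care in matching the decay exponent against the power $100$ in the essential-constancy condition is exactly the detail the paper leaves implicit in ``the claim easily follows.''
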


\begin{proof}
We can write $f = T_m f$ where $m$ is a suitable bump function adapted to $2\omega$.  From the decay of the kernel of $T_m$ we thus have the pointwise estimate
$$ |f(x)| = |T_m f(x)| \lesssim 2^{jJ} \int \frac{|f(y)|}{(1 + 2^{jJ} |x-y|)^{10}}\ dy$$
and the claim easily follows.
\end{proof}

\begin{lemma}
\label{count}
Let $(\T,\xi_\T,R_\T:=I_\T\times J_\T)$ be a selected tree and $j,j'\in\J_\T$ with $j<j'$. Then
$$E_{j',\T}\subseteq E_{j,\T}.$$
Also,
$$\|\sum_{j\in \J_\T}2^{-jJ}\#\partial E_{j,\T,x}\|_{L^{\infty}_x(I_\T)}\lesssim |I_\T|$$
(with a similar statement for $y$), where
$$E_{j,\T,x}:=E_{j,\T}\cap (\{x\}\times\R).$$
\end{lemma}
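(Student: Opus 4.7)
The plan is to treat the two claims separately: the nesting $E_{j',\T} \subseteq E_{j,\T}$ is a structural consequence of the greedy/convex selection of $\T$, and the $L^\infty$ counting bound then reduces to a combinatorial scale-telescoping that uses the nesting in an essential way.

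For the nesting, fix $j < j'$ in $\J_\T$ and $P' \in \T$ with $j_{P'} = j'$; I aim to produce $P \in \T$ with $j_P = j$ and $R_{P'} \subseteq R_P$. The natural candidate is the multi-tile whose spatial component $R_P$ is the $J$-dyadic square of sidelength $2^{-jJ}$ containing $R_{P'}$ and whose first frequency component $\omega_P \in \D_0$ is the unique scale-$2^{jJ}$ interval for which $\bar\omega_{\xi_\T, R_\T} \subseteq \bar\omega_P$ (the remaining components are then forced because $\Q$ is a one-parameter family). Since $\bar\omega_P$ and $\bar\omega_{P'}$ both lie in the shifted dyadic grid $\D_1$ and both contain $\bar\omega_{\xi_\T, R_\T}$, the smaller is contained in the larger, giving $\bar\omega_P \subseteq \bar\omega_{P'}$. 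Now $P$ manifestly satisfies the tree conditions for $\T$, so $P$ would belong to $\T$ unless it was previously eliminated into some earlier greedily selected tree $\T_k$. In that case $R_P \subseteq R_{\T_k}$ and $\bar\omega_{\xi_{\T_k}, R_{\T_k}} \subseteq \bar\omega_P$; combined with $R_{P'} \subseteq R_P$ and $\bar\omega_P \subseteq \bar\omega_{P'}$ this would place $P'$ in $\T_k$, contradicting $P' \in \T$. Hence $P \in \T$ and $R_{P'} \subseteq R_P \subseteq E_{j,\T}$.

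For the counting bound, fix $x \in I_\T$ and enumerate $\J_\T = \{j_1 < \dots < j_m\}$. Set $A_l := E_{j_l, \T, x}$, a union of $J$-dyadic intervals of length $2^{-j_l J}$ in $J_\T$, and $B_l := A_{l-1} \setminus A_l$; by the nesting from Part 1, $A_1 \supseteq A_2 \supseteq \dots \supseteq A_m$. A boundary point of $A_l$ is either inherited from $\partial A_1$ or lies on the inner boundary of some $B_k$ with $2 \leq k \leq l$, giving
\[
\#\partial A_l \;\leq\; \#\partial A_1 \;+\; 2 \sum_{k=2}^{l} M_k,
\]
where $M_k$ is the number of connected components of $B_k$. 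Since the components of $A_1$ have length at least $2^{-j_1 J}$, one has $\#\partial A_1 \lesssim |A_1| \cdot 2^{j_1 J}$, so the geometric series yields $\sum_l 2^{-j_l J} \#\partial A_1 \lesssim 2^{-j_1 J} \#\partial A_1 \lesssim |A_1| \leq |I_\T|$. Swapping the order of summation in the remaining contribution gives $\sum_k M_k \sum_{l \geq k} 2^{-j_l J} \lesssim \sum_k 2^{-j_k J} M_k$, and since components of $B_k$ have length at least $2^{-j_k J}$ this is $\lesssim \sum_k |B_k| \leq |A_1| \leq |I_\T|$ by telescoping. The $y$-statement is identical. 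The main subtlety is the grid-nesting step $\bar\omega_P \subseteq \bar\omega_{P'}$ in Part 1, which is precisely what the somewhat ornate $\D_0/\D_1$ setup of Definition \ref{deffreqint} was designed to deliver; once this is in hand, Part 2 is purely combinatorial.
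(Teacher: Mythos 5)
Your write-up is essentially the intended argument: the paper disposes of this lemma with a one-line reference to Lemma 4.8 of \cite{MTT8}, remarking that the cross sections $E_{j,\T,x}$ inherit the properties of $E_{j,\T}$, and what you do --- greedy maximality plus nesting of the enlarged frequency intervals in the grid $\D_1$ to get $E_{j',\T}\subseteq E_{j,\T}$, then a purely one-dimensional telescoping count on each fiber --- is exactly that argument made explicit. Your second part is correct as written: by the nesting, each $B_k=A_{k-1}\setminus A_k$ is a union of $J$-dyadic intervals of length $2^{-j_kJ}$, so $2^{-j_kJ}M_k\le |B_k|$, the boundary of $A_l$ is contained in $\partial A_1$ together with the boundaries of the removed pieces, and the two resulting sums are $\lesssim |A_1|\le |I_\T|$ uniformly in $x$. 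Your contradiction step in the first part (transporting the top data of the earlier selected tree through $R_{P'}\subseteq R_P$ and $\bar\omega_P\subseteq\bar\omega_{P'}$ to conclude that $P'$ would already have been swept into $\T_k$) is also the correct use of the greedy selection.

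The one step you should not pass over silently is the dichotomy ``$P$ belongs to $\T$ unless it was previously eliminated.'' Maximality of a selected tree only forces membership of multi-tiles that are actually present in the collection from which the selection is performed; since Theorem \ref{main2} allows $\P$ to be an arbitrary finite collection, your candidate $P$ (the scale-$j$ spatial square containing $R_{P'}$, paired with the tree's unique scale-$j$ frequency cube, whose existence you should attribute to $j\in\J_\T$) need not lie in $\P$ at all, in which case neither alternative applies. Indeed, for a two-element collection consisting of spatially disjoint multi-tiles at scales $j<j'$ whose enlarged frequency intervals both contain $\bar\omega_{\xi_\T,R_\T}$, the selected maximal tree violates the nesting, so some completeness/saturation of $\P$ with respect to such intermediate tiles is genuinely needed. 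This is a defect you share with the paper's citation rather than one you introduced (it is also one reason the paper works with the regularized sets $\tilde E_j$ of Definition \ref{hull-definition}, which are nested by construction), but in a self-contained proof you should state the assumption on $\P$ under which the dichotomy is valid, or indicate how the collections arising in Proposition \ref{selection} satisfy it. Apart from this point, and the routine verification that $R_P\subseteq R_\T$ (dyadic nesting plus the fact that $R_\T$ has sidelength at least $2^{-jJ}$ because $j\in\J_\T$), your proof is sound.
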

\begin{proof}
The argument is the same as in Lemma 4.8 in \cite{MTT8}, by noting that the cross sections $E_{j,\T,x}$ share the same properties as $E_{j,\T}$.
\end{proof}
Recall that Lemma \ref{count} was used in Section \ref{sec:22} to replace the spacial truncations in Proposition \ref{mainsingletreeestimate5}  by certain smoother variants of themselves, thus reducing the proof of that proposition to proving \eqref{supbound177654}.

%We will typically apply this replacement procedure for the lacunary and nonlacun%ary components $\T_{o}$ and $\T_{l}$ of a selected tree $\T$. It it easy to see %that Lemma \ref{count} holds for these trees, too,  since $j\in\J_{\T_{o}}$ impl%ies $j\in\J_{\T}$ and $E_{j,\T_{0}}=E_{j,\T}$ (and similarly for $\T_l$.) 

Let now $(\T,\xi_\T,R_\T)$ be a (not necessarily convex) tree. We also need to work with the following variants $\tilde{E}_j$ of $E_{j,\T}$ which enjoy better regularity properties.

\begin{definition}
\label{hull-definition}
Let $\R_\T$ be the collection of all
maximal dyadic squares $R\subseteq R_T$ which have the property that
$3 R$ does not contain any of the squares $R_P$ with $P\in \T$.
For an integer $j\ge 0$ let $\tilde{E}_j$ be the union of all squares $R$
in $\R_\T$ such that $j_R>j$. 
For an integer $j$ with $j<j_T$ we define $\tilde{E}_j=\emptyset$.
\end{definition}

The sets $\tilde{E}_j$ obviously depend on the tree $\T$, but we suppress
this dependence.

Clearly the intervals in $\R_\T$ form a partition of $R_\T$ and the sets  $\tilde{E}_j$ are nested.
The nice regularity properties are stated in the following lemma:

\begin{lemma}\label{nicer-lemma}
Any two neighboring squares in $\R_\T$ differ by at most a factor
$2^{J}$ in their sidelength ratio. 

The set $\tilde{E}_j$ is a union of $J$- dyadic squares of sidelength
$2^{-jJ}$ and contains $E_{j,\T}$ if $j\in \J_\T$.
\end{lemma}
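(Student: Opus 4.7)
The plan is to prove both claims by combining the maximality defining $\R_\T$ with the fact that $R_\T$ and all $R_P$ for $P\in\T$ are $J$-dyadic. I will adopt the natural reading of Definition \ref{hull-definition} under which the members of $\R_\T$ are themselves $J$-dyadic squares (the subsequent use of $j_R$ in defining $\tilde{E}_j$ seems to require this), so that any two members are disjoint or nested and $\R_\T$ partitions $R_\T$ by maximality (the parent $P_1$ of any $R_1\in\R_\T$ strictly contained in another $R_2\in\R_\T$ satisfies $3P_1\subseteq 3R_2$, so inherits the property of $R_2$, contradicting the maximality of $R_1$).

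For the first claim I argue by contradiction: suppose $R,R'\in\R_\T$ are neighbors with $j_{R'}\ge j_R+2$, so that the sidelength $\ell':=2^{-(j_{R'}-1)J}$ of the $J$-dyadic parent ${R'}^{\mathrm{par}}$ of $R'$ satisfies $\ell'\le 2^{-J}\ell$, where $\ell:=2^{-j_RJ}$. By maximality of $R'$, the triple $3{R'}^{\mathrm{par}}$ contains some $R_P$ with $P\in\T$. Since $R$ and $R'\subseteq {R'}^{\mathrm{par}}$ share a boundary point, the centers of $R$ and ${R'}^{\mathrm{par}}$ are within $\ell/2+\ell'/2$ of each other in $\ell^\infty$, so any point of $3{R'}^{\mathrm{par}}$ lies within $\ell/2+\ell'/2+\tfrac{3}{2}\ell'=\ell/2+2\ell'\le \ell/2+2^{1-J}\ell\le\tfrac{3}{2}\ell$ of the center of $R$, i.e.\ inside $3R$. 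Hence $R_P\subseteq 3R$, contradicting $R\in\R_\T$.

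For the second claim, fix a $J$-dyadic square $Q\subseteq R_\T$ of sidelength $2^{-jJ}$. Because $\R_\T$ is a partition of $R_\T$ by $J$-dyadic squares, each $R\in\R_\T$ meeting $Q$ is either $\supseteq Q$ or $\subsetneq Q$. In the first case $j_R\le j$ (and such $R$ is unique by disjointness), so $Q\cap\tilde{E}_j=\emptyset$; in the second every such $R$ has $j_R>j$ and these $R$'s tile $Q$, giving $Q\subseteq\tilde{E}_j$. This yields the desired $J$-dyadic structure of $\tilde{E}_j$. Finally, when $j\in\J_\T$ and $P\in\T$ has $j_P=j$, for any $x\in R_P$ the unique $R\in\R_\T$ containing $x$ must satisfy $j_R>j$: otherwise $j_R\le j$ would force $R\supseteq R_P$ by nestedness, so $R_P\subseteq 3R$, contradicting $R\in\R_\T$. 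Thus $R\subseteq\tilde{E}_j$, whence $R_P\subseteq\tilde{E}_j$. The only genuinely nontrivial step is the $\ell^\infty$ inclusion $3{R'}^{\mathrm{par}}\subseteq 3R$ in the first claim; the rest is routine dyadic bookkeeping.
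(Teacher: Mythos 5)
The paper does not actually prove this lemma: it records just before the statement that the squares in $\R_\T$ ``clearly'' partition $R_\T$ and then states the regularity properties without argument, so there is no in-paper proof to compare yours against. Your Whitney-type verification is correct and supplies exactly the routine argument the authors omit: the contradiction via the $J$-dyadic parent ${R'}^{\mathrm{par}}$ for the sidelength comparison, and the tiling/nestedness argument for the $J$-dyadic structure of $\tilde{E}_j$ and the inclusion $E_{j,\T}\subseteq\tilde{E}_j$.

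Two small remarks. First, your interpretive choice (members of $\R_\T$ are maximal \emph{$J$-dyadic} squares) is indeed the reading consistent with the paper's use of $j_R$ and with the factor $2^J$ in the first claim; if one instead reads ``maximal dyadic squares'' literally, your argument goes through essentially verbatim with the ordinary dyadic parent and even yields a stronger neighbor comparison, so nothing is at stake. Second, when you invoke maximality of $R'$ to conclude that $3{R'}^{\mathrm{par}}$ contains some $R_P$, you implicitly need ${R'}^{\mathrm{par}}\subseteq R_\T$ so that the parent is an admissible competitor; this is automatic from your own size bound, since $\ell({R'}^{\mathrm{par}})\le 2^{-J}\ell(R)\le 2^{-J}\ell(R_\T)$ and ${R'}^{\mathrm{par}}$ meets $R_\T$, hence is contained in it by dyadic nestedness. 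With that one line added, the proof is complete; the geometric inclusion $3{R'}^{\mathrm{par}}\subseteq 3R$ is computed correctly (it only needs $J\ge 2$, which holds since $J$ is large).
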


The following two lemmas will be mainly applied together and are the main ingredient behind the estimates on phase-space projections in Proposition \ref{phase-space}.

\begin{lemma}
\label{auximp21}
If  $R_0$ is a $J$- dyadic square with sidelength $2^{-j_0J}$ such that $3R_0\cap \tilde{E}_{j_0}\not=\emptyset$, then there is $P\in\T$  such that $R_{P}\subseteq 10R_0$ and $j_P\ge j_{R_0}$.
\end{lemma}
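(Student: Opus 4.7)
The plan is to produce the required $P$ by going up one $J$-dyadic step from a small square in $\R_\T$ that witnesses the intersection hypothesis, and then invoking the maximality built into the definition of $\R_\T$. By the definition of $\tilde E_{j_0}$, the assumption $3R_0 \cap \tilde E_{j_0} \neq \emptyset$ produces a square $R \in \R_\T$ with $j_R > j_0$, so that $|R| \le 2^{-(j_0+1)J}$, and with $R \cap 3R_0 \neq \emptyset$. Let $R^*$ be the unique $J$-dyadic square of sidelength $L := 2^{-j_0 J}$ containing $R$.

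Since $R \subseteq R_\T$, the $J$-dyadic squares $R^*$ and $R_\T$ meet and must therefore be nested. In the case $R_\T \subsetneq R^*$ one argues directly: any $P \in \T$ then satisfies $R_P \subseteq R_\T \subsetneq R^*$ and $j_P \geq j_\T > j_0$, and the geometric inclusion $R^* \subseteq 5R_0$ established below finishes this case. In the remaining case $R^* \subseteq R_\T$, the square $R^*$ is a strict dyadic superset of $R$ sitting inside $R_\T$; by maximality of $R$ in $\R_\T$, the square $R^*$ cannot itself belong to $\R_\T$, and the only possible reason is that $3R^*$ contains some $R_P$ with $P \in \T$.

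It remains to verify the two required inclusions. The non-empty intersection $R \cap 3R_0$ also lies inside $R^* \cap 3R_0$, and since $R^*$ and $R_0$ are $J$-dyadic squares of the same sidelength $L$, the sup-norm distance between their centers is at most $2L$. This yields $R^* \subseteq 5R_0$ and hence $3R^* \subseteq 7R_0 \subseteq 10R_0$, so $R_P \subseteq 3R^* \subseteq 10R_0$. For the scale condition, the inclusion $R_P \subseteq 3R^*$ forces $2^{-j_P J} = |R_P| \leq 3L = 3 \cdot 2^{-j_0 J}$, i.e.\ $2^{(j_0-j_P)J} \leq 3$; since $J$ is taken large (in particular $J \geq 2$), one must have $j_0 - j_P \leq 0$, giving $j_P \geq j_0 = j_{R_0}$.

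The only subtle point is the relative size of $R^*$ and $R_\T$, which is why I would separate off the case $R_\T \subsetneq R^*$ at the outset; apart from this case distinction, the lemma is essentially a one-line application of the maximality property that defines $\R_\T$, combined with a routine calculation comparing the $J$-dyadic scales.
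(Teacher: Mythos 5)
Your proof is correct and follows essentially the same route as the paper: locate a $J$-dyadic square of sidelength $2^{-j_0J}$ near $R_0$, use the maximality defining $\R_\T$ to force some $R_P$ into its triple, and then compare $J$-dyadic scales to get $R_P\subseteq 10R_0$ and $j_P\ge j_0$. The only cosmetic difference is that you pass from a constituent square $R\in\R_\T$ to its $J$-dyadic superset $R^*$ and apply maximality directly (also handling the vacuous case $R_\T\subsetneq R^*$), whereas the paper invokes Lemma \ref{nicer-lemma} to find a full square $R_1\subseteq\tilde E_{j_0}\cap 5R_0$ and compresses the same maximality argument into ``by the definition of $\tilde E_{j_0}$''.
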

\begin{proof}
There is a dyadic square $R_1$ with sidelength $2^{-j_0J}$ which is contained in $\tilde{E}_{j_0}\cap 5R_0$. By the definition of $\tilde{E}_{j_0}$, $3R_1$ contains some $R_P$, with $P\in\T$. Since $3R_1\subset 7R_0$, it follows that $R_P\subset 7R_0$. The claim now follows from the fact that both $R_0$ and $R_P$ are $J$- dyadic.
\end{proof}

\begin{lemma}
\label{auximp22}
We have
\be{unsigned-estlacunary}
\| \tilde \chi_{R_P}^{10}(x,y) T_{m_{P_i}}(F_i(\cdot,y))(x,y)\|_2
 \lesssim |R_P|^{1/2} \size^*_i(\T) 
\end{equation}
for all indices $1 \leq i \leq 2$, all trees $\T$, multi-tiles $P \in \T$ and symbols $m_{P_i}$ adapted to $\omega_{P_i}$ of order $N^2$. A similar statement holds for $i=3$, with the obvious modifications.

We also have
\be{unsigned-est}
\| \tilde \chi_{R_P}^{10}(x,y) T_{m_{P_i}}(F_i(\cdot,y))(x,y)\|_2
 \lesssim |R_P|^{1/2} \size^*_i(\T) 
\end{equation}
for all indices $1 \leq i \leq 2$, all trees $\T$, multi-tiles $P \in \T$ and symbols $m_{P_i}$ adapted to $10 \omega_{P_i}$ of order $N^2$ which in addition vanish at some point $v_P\in 10\omega_P$. 

Moreover,
\be{unsigned-est-sup}
\| \tilde \chi_{R}^{10}(x,y)T_{m_{R}}(F_i(\cdot,y))(x,y) \|_2
 \lesssim |R|^{1/2} \size^*_i(\T) 
\end{equation}
for all indices $1 \leq i \leq 2$, all non-lacunary trees $\T$, 
all $J$-dyadic intervals $R$ for which there is $P\in\T$ with $R_P\subseteq 10 R$ and all
symbols $m_{R}$ adapted to $5\omega_{\xi_\T,R}$. 
\end{lemma}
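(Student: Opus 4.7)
The plan is to reduce each of the three estimates to an instance of the maximal $i$-size applied to a suitable subtree of $\T$, taking advantage of the fact that $\size_i^*(\T)$ is a supremum over \emph{all} subtrees of $\T$. Estimates \eqref{unsigned-estlacunary} and \eqref{unsigned-est} are both handled by taking the singleton $\{P\}$: for \eqref{unsigned-estlacunary} I would regard $\{P\}$ as a lacunary subtree of $\T$ (in the sense of Remark \ref{bothlacandoversingle}), with top data $(\xi,R_P)$ for any $\xi\in 100\omega_P\setminus 2\omega_P$; since $\omega_{P_i}=\omega_P$, the given $m_{P_i}$ is an admissible symbol in the supremum defining $\|F_i\|_P$, and
$$\|\tilde\chi_{R_P}^{10}(x,y)T_{m_{P_i}}(F_i(\cdot,y))\|_{L^2_{x,y}}\le\|F_i\|_P=|R_P|^{1/2}\size_i(\{P\})\le|R_P|^{1/2}\size_i^*(\T).$$
For \eqref{unsigned-est} one instead views $\{P\}$ as an \emph{overlapping} subtree with top data $(c(\omega_P),R_P)$; then $\omega_{\{P\}}=\omega_P$, and the hypothesis on $m_{P_i}$ places it exactly in the symbol class appearing in the definition of $\size_i(\{P\})$, so the bound follows at once.

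The real content is in \eqref{unsigned-est-sup}. Here I plan to produce a non-lacunary subtree $(\T',\xi_\T,R')$ of $\T$ whose top $R'$ is a $J$-dyadic square with $|R'|=|R|$ and $R'\supseteq R_P$. Such an $R'$ exists because $R_P\subseteq 10R$ forces $\mathrm{side}(R_P)\le 10\,\mathrm{side}(R)$, and since $J$ is large enough that $2^{2J}>100$ the $J$-dyadic grid admits no scale strictly between $|R|$ and $100|R|$; hence $|R_P|\le|R|$ and $R_P$ sits inside a unique $J$-dyadic square of side $\mathrm{side}(R)$. I would then define
$$\T':=\{P'\in\T:R_{P'}\subseteq R',\ \bar\omega_{\xi_\T,R'}\subseteq\bar\omega_{P'}\}.$$
To check $P\in\T'$ (so that $\T'$ is non-empty), the frequency inclusion uses that the non-lacunarity of $\T$ gives $\xi_\T\in 2\omega_P$, while the scale count $|\bar\omega_{\xi_\T,R'}|=1000\cdot 2^{j_R J}$ versus $|\bar\omega_P|\ge 3000\cdot 2^{j_P J}\ge 3000\cdot 2^{j_R J}$ lets the smaller interval fit inside the larger one. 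The subtree is non-lacunary because $\T$ is.

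It then suffices to observe that $m_R$ is an admissible test symbol for the overlapping size $\size_i(\T',\xi_\T,R')$. The equality $|R'|=|R|$ gives $\omega_{\xi_\T,R'}=\omega_{\xi_\T,R}$, so $m_R$ is supported in $5\omega_{\xi_\T,R'}\subsetneq 10\omega_{\xi_\T,R'}$ and automatically vanishes on the non-empty complement $10\omega_{\xi_\T,R'}\setminus 5\omega_{\xi_\T,R'}$; its derivative bounds relative to $10\omega_{\xi_\T,R'}$ are off from the strict definition only by an $O(1)$ factor per derivative, which the convention following \eqref{adaptedstrictway} absorbs. Consequently
$$\|\tilde\chi_{R'}^{10}\,T_{m_R}(F_i(\cdot,y))\|_{L^2_{x,y}}\lesssim|R'|^{1/2}\size_i(\T',\xi_\T,R')\le|R|^{1/2}\size_i^*(\T),$$
and since $R$ and $R'$ both meet $R_P\ne\emptyset$ and are $J$-dyadic squares of equal size they lie within $O(\mathrm{side}(R))$ of each other, so $\tilde\chi_R^{10}\lesssim\tilde\chi_{R'}^{10}$ pointwise, closing the argument. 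The only real obstacle is the careful bookkeeping of the various $O(1)$ enlargement constants ($5\omega$ vs.\ $10\omega$, $\bar\omega$ factors, the $10R$ vs.\ $R$ slack), but these are precisely what the looseness in the definition of ``adapted'' was designed to absorb.
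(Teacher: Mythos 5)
Your proposal is correct and follows essentially the same route as the paper: singleton lacunary/overlapping trees dispose of \eqref{unsigned-estlacunary} and \eqref{unsigned-est}, and \eqref{unsigned-est-sup} is reduced to the definition of the overlapping size at the scale of $R$, using $|R_P|\le|R|$ from $J$-dyadicity, the comparability of the weights after an $O(\mathrm{side}(R))$ relocation, and non-lacunarity for the frequency inclusion. The only cosmetic difference is that the paper translates $R$ onto $R_P$ and splits into the cases $|R_P|=|R|$ (reducing to \eqref{unsigned-est}) and $|R_P|<|R|$ (a singleton tree whose top frequency is an endpoint of $5\omega_{\xi_\T,R}$), whereas you keep the top frequency $\xi_\T$, pass to the equal-size $J$-dyadic square $R'\supseteq R_P$ and the overlapping subtree with top data $(\xi_\T,R')$, obtaining the required vanishing point for free from the support of $m_R$ in $5\omega_{\xi_\T,R'}\subsetneq 10\omega_{\xi_\T,R'}$.
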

\begin{proof}
Inequality \eqref{unsigned-estlacunary} follows from the fact that $(\{P\},\xi,R_P)$ is a lacunary tree for some appropriate $\xi$, see Remark \ref{bothlacandoversingle}.

Similarly, \eqref{unsigned-est} follows from the fact that $(\{P\},c(\omega_P),R_P)$ is an overlapping tree and the fact that $\omega_{c(\omega_P),R_P}=\omega_P$.

Now we consider \eqref{unsigned-est-sup}.
Observe first of all that $|R_P|\le |R|$
because both squares are $J$-dyadic. By translating $R$, we may
as well assume $R_P\subseteq R$. Namely, we have to
translate $R$ by at most ten times its length, and observe that
$\tilde{\chi}_R$ stays the same up to some bounded factor.

We consider the two cases $|R_P|=|R|$ and
$|R_P|<|R|$. 

Assume first $|R_P|=|R|$ and thus $R_P=R$.
Observe that by non-lacunarity and the fact that $|\omega_{P}|=|\omega_{\xi_T,R}|$, it follows that $\omega_{P}$
is strictly contained in $5\omega_{\xi_T,R}$.
Hence $5\omega_{\xi_\T,R}$ is contained in
$10\omega_{P}$ and the two intervals are comparable in size.
The claim now follows from \eqref{unsigned-est} since $m_R$ also vanishes at any $v\in 10\omega_{P}\setminus \omega_{\xi_\T,R}$.

Now assume $|R_P|<|R|$. We consider again the singleton
tree $\{P\}$ with top data $(\xi,R)$ so that 
$\xi$ is an endpoint of $5\omega_{\xi_\T,R}$.
Again, by non-lacunarity we see that
these top data indeed turn $\{P\}$ into a tree.
Note that $5\omega_{\xi_\T,R}\subsetneq 10\omega_{\xi,R}$ and the two intervals are comparable in size. It follows that the multiplier $m_{R}$ is adapted to $10\omega_{\xi,R}$
(with a possibly larger constant). Moreover, $m_{R}$ vanishes at any $v\in10\omega_{\xi,R}\setminus5\omega_{\xi_\T,R}$ and \eqref{unsigned-est-sup} follows 
by definition of the tree size.

\end{proof}
\begin{remark}
Inequality \eqref{unsigned-estlacunary} controls projections associated with a multi-tile, and intervenes in the estimates for the phase-space projection in the lacunary case (i.e. the case of lacunary tree when $i\in\{1,2\}$, and the case of a general tree when $i=3$). See the proof Proposition \ref{phase-space}. It was also used (via Lemma \ref{triv-bounds} below) to replace functions by their phase-space projections in the model sum associated with a tree. See Section \ref{sec:22}.

On the other hand, \eqref{unsigned-est-sup} controls the phase-space projection on a one dimensional tile-like region which is not necessarily a one dimensional tile. This estimate will be used repeatedly in the estimates for an overlapping tree in Proposition \eqref{phase-space}.
\end{remark}

An easy application of the above lemma gives
\begin{lemma}\label{triv-bounds}
For all $1 \leq i \leq 3$, $j \in \J_\T$, and $J$- dyadic squares $R$ with $j_R=j$ we have
$$ \|\tilde{\tilde \chi}_j^{1/6} \tilde \pi_j F_i \|_{L^{p_i}(R)} \lesssim
|R|^{1/p_i} \size^*_i(\T)^{\theta_i}.$$
\end{lemma}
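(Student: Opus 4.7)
The plan is to interpolate between a trivial $L^\infty$ bound and an $L^2(R)$ bound, placing $L^{p_i}$ at the interpolation weight $2/p_i$. The key step is the $L^2(R)$ estimate
\[
\|\tilde{\tilde\chi}_j^{1/6}\,\tilde\pi_j F_i\|_{L^2(R)}\lesssim |R|^{1/2}\,\size_i^*(\T).
\]
By subadditivity of $t\mapsto t^{1/3}$ and \eqref{eta-bounds}, one can dominate $\tilde{\tilde\chi}_j^{1/3}$ pointwise by $\sum_{P\in\T,\,j_P=j}\tilde\chi_{R_P}^{20}$, reducing matters to bounding each $\|\tilde\chi_{R_P}^{10}\,\tilde\pi_j F_i\|_2^2$. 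These are exactly controlled by Lemma \ref{auximp22}: \eqref{unsigned-estlacunary} suffices when $i=3$ (trees are automatically $3$-lacunary, see Remark \ref{3lacanyways}) or when $i\in\{1,2\}$ with $\T$ lacunary; for $i\in\{1,2\}$ with $\T$ overlapping, we apply \eqref{unsigned-est-sup} with the choice $R=R_P$, noting that the overlapping condition $\xi_\T\in 2\omega_{P_i}$ forces $\omega_{P_i}\subseteq 5\omega_{\xi_\T,R_P}$. The rapid spatial decay of $\tilde\chi_{R_P}^{20}$ then allows us to sum the resulting geometric series in the spatial separation of $R_P$ from $R$.

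For $i\in\{1,2\}$, the $L^\infty$ bound is immediate from $\|\tilde\pi_j F_i\|_\infty\lesssim \|F_i\|_\infty\le 1$ (since $\tilde\pi_j$ is convolution in $x$ with an $L^1$-normalized Schwartz kernel), so $\|\tilde{\tilde\chi}_j^{1/6}\,\tilde\pi_j F_i\|_\infty\lesssim 1$. Log-convexity of $L^p$-norms then yields
\[
\|\tilde{\tilde\chi}_j^{1/6}\,\tilde\pi_j F_i\|_{L^{p_i}(R)}\le \|\tilde{\tilde\chi}_j^{1/6}\,\tilde\pi_j F_i\|_\infty^{1-2/p_i}\,\|\tilde{\tilde\chi}_j^{1/6}\,\tilde\pi_j F_i\|_{L^2(R)}^{2/p_i}\lesssim |R|^{1/p_i}\,\size_i^*(\T)^{2/p_i},
\]
which is the required bound with $\theta_i=2/p_i$.

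For $i=3$, since $\tilde\pi_j F_3$ is Fourier-supported in a $2^{jJ}\times 2^{jJ}$ rectangle and $\tilde\chi_R^{10}$ is essentially constant at scale $2^{-jJ}$ in each variable, Bernstein (Lemma \ref{bernstein} applied once in each coordinate) upgrades the $L^2(R)$ bound to a pointwise one:
\[
\|\tilde\chi_R^{10}\,\tilde\pi_j F_3\|_\infty\lesssim 2^{jJ}\,\|\tilde\chi_R^{10}\,\tilde\pi_j F_3\|_2\lesssim \size_3^*(\T),
\]
using $|R|^{1/2}=2^{-jJ}$. Hence $|\tilde\pi_j F_3|\lesssim \size_3^*(\T)$ on $R$, and the trivial bound $\|\cdot\|_{L^{p_3}(R)}\le |R|^{1/p_3}\|\cdot\|_{L^\infty(R)}$ gives the case $\theta_3=1$. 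Smaller $\theta_3\in[0,1)$ follow from $\size_3^*(\T)\lesssim 1$ (Lemma \ref{upperboundestforsize}). The main technical subtlety is reconciling the square $R$ in the statement with the $R_P$'s in Lemma \ref{auximp22}; this is what forces the decomposition of $\tilde{\tilde\chi}_j^{1/3}$ above and relies essentially on the decay of $\tilde{\tilde\chi}_j$ away from $E_{j,\T}$, so that only boundedly many $P$'s contribute nontrivially.
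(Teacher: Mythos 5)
Your overall skeleton is the paper's: interpolate between a trivial $L^\infty$ bound and an $L^2(R)$ bound, and get the latter from the single-tile estimates of Lemma \ref{auximp22}. Two remarks on the $i\in\{1,2\}$ part. First, the lacunary/overlapping case split is unnecessary: \eqref{unsigned-estlacunary} is stated for \emph{all} trees and all $P\in\T$ (it only uses that the singleton $(\{P\},\xi,R_P)$ is a lacunary tree, Remark \ref{bothlacandoversingle}), so you never need \eqref{unsigned-est-sup} here. Second, in the $L^2(R)$ step your literal domination $\tilde{\tilde\chi}_j^{1/3}\lesssim\sum_{P:j_P=j}\tilde\chi_{R_P}^{20}$ followed by \eqref{unsigned-estlacunary} for each term gives $\sum_P|R_P|\,\size_i^*(\T)^2\sim|R_\T|\size_i^*(\T)^2$, not $|R|\size_i^*(\T)^2$; you must retain the separation factor, e.g. dominate on $R$ by $\sum_P\bigl(1+2^{jJ}\dist(R,R_P)\bigr)^{-20}\tilde\chi_{R_P}^{20}$ (possible since the decay order of $\chi_{I_P,j}$ is $N^2-1\gg 120$) and then sum. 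Your closing sentence about the ``geometric series'' shows you intend this, so I count it as looseness rather than a gap; the paper avoids the summation altogether by picking a single nearest $P$ with $j_P=j$ and using $\tilde{\tilde\chi}_j^{1/6}\lesssim\tilde\chi_{R_P}^{10}$ on $R$.

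The genuine gap is the $i=3$ step. You apply Bernstein with the weight $\tilde\chi_R^{10}$ centered at the \emph{arbitrary} square $R$ and assert $\|\tilde\chi_R^{10}\tilde\pi_jF_3\|_2\lesssim|R|^{1/2}\size_3^*(\T)$, concluding $|\tilde\pi_jF_3|\lesssim\size_3^*(\T)$ on $R$. Neither claim follows from what you proved (your $L^2$ bound is local to $R$ and carries the weight $\tilde{\tilde\chi}_j^{1/6}$, while Bernstein needs a global weighted bound), and both are false in general: the size only controls weighted norms with weights $\tilde\chi_{R_P}^{10}$ centered at tiles of $\T$, and $\tilde\chi_R^{10}$ does not localize near the tree. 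Concretely, take $\T=\{P\}$, $j=j_P$, let $R$ be a scale-$j$ square at distance $D\,2^{-jJ}$ from $R_P$, and let $F_3$ be a height-one bump at scale $2^{-jJ}$ supported on $R$, modulated to a frequency in the support of the multiplier of $\tilde\pi_j$; then $|\tilde\pi_jF_3|\sim 1$ on $R$ while $\size_3^*(\T)\lesssim D^{-10}$, so your pointwise claim fails (the Lemma itself survives only because $\tilde{\tilde\chi}_j^{1/6}\sim D^{-(N^2-1)/6}$ on $R$, which is far smaller than $D^{-10}$ --- the weight cannot be discarded). The repair is exactly the paper's route, and is the same device you used for $i\in\{1,2\}$: choose $P\in\T$ with $j_P=j$ nearest to $R$, note $\tilde{\tilde\chi}_j^{1/6}\lesssim\tilde\chi_{R_P}^{10}$ pointwise on $R$ (here the large decay order $N^2$ beats the exponent $10$), and apply the two-dimensional Bernstein inequality with the \emph{tile-centered} weight $\tilde\chi_{R_P}^{10}$, which is essentially constant at scale $2^{-jJ}$ and for which the global weighted $L^2$ bound is precisely the $i=3$ version of \eqref{unsigned-estlacunary}; this yields $\|\tilde{\tilde\chi}_j^{1/6}\tilde\pi_jF_3\|_{L^\infty(R)}\lesssim\size_3^*(\T)$, which together with the trivial bound $\lesssim 1$ and $|R|^{1/p_3}$ gives all $\theta_3\in[0,1]$.
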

\begin{proof}
By interpolation  it suffices to prove the bounds
$$ \|\tilde{\tilde \chi}_j^{1/6} \tilde \pi_j F_i \|_{L^2(R)} \lesssim
|R|^{1/2} \size^*_i(\T),$$
$$ \|\tilde{\tilde \chi}_j^{1/6} \tilde \pi_j F_i \|_{L^{\infty}(R)} \lesssim 1,$$
and (in the $i=3$ case only)
$$ \|\tilde{\tilde \chi}_j^{1/6} \tilde\pi_j F_3\|_{L^{\infty}(R)} \lesssim \size^*_3(\T).$$
The second estimate is immediate from the boundedness of the $F_i$, while the third follows from the first and (the 2 dimensional version of) Lemma \ref{bernstein}.  

Thus it suffices to prove the first inequality.
Fix $i$, $j$, $R$.  There exists $P \in \T$ with $|R_P|=|R|$ 
such that we have the pointwise estimate
$$\tilde{\tilde \chi}_j(x,y)^{1/6} \lesssim \tilde {\chi}_{R_P}^{10}(x,y) $$ 
on $R$.  It thus suffices to show that
$$  \|\tilde{\chi}_{R_P}^{10} \tilde \pi_j F_i \|_{L^2} \lesssim |R|^{1/2} \size^*_i(\T).$$
This however was observed in \eqref{unsigned-estlacunary}.

\end{proof}

For each $y\in J_T$ we define the cross sections
$$\tilde{E}_{j,y}:=\tilde{E}_{j}\cap (\R\times\{y\}).$$

We also denote with $\tilde{\Omega}_j$  the collection of connected components of $\tilde E_j$. Note that such a  $U\in\tilde{\Omega}_j$ may not necessarily be a square, however, from Lemma \ref{nicer-lemma} we know it is a union of dyadic squares of sidelength $2^{-jJ}$. It follows that each such $U$ can be decomposed as a disjoint union of dyadic rectangles $I\times J$, such that $|J|=2^{-jJ}$,  $I$ is a $J$-dyadic interval whose length is an integer multiple of $2^{-jJ}$ and such that the line segments $x_I^l\times J$ and $x_I^r\times J$ lie on the boundary of $\tilde E_j$ (or equivalently, on the boundary of $U$). Here  $x_I^l$ and $x_I^r$ are the left and right endpoints of $I$, respectively. We denote with $\Omega_j$ the collection of all such rectangles $I\times J$ that arise by decomposing each $U\in\tilde{\Omega}_j$.

\begin{lemma}\label{count-again}
Let $(\T,\xi_\T,R_\T)$ be a (not necessarily convex) tree. Then for each $y\in J_\T$ which is not a dyadic point\footnote{The set of dyadic points, i.e. endpoints of dyadic intervals, form a set of measure zero, so restricting to their complement will not affect the later part of our argument. Sets of measure zero will be repeatedly ignored in the following.}
\begin{equation}
\label{reguboundary}
 \sum_{j \in \J_\T} 2^{-jJ} \# \partial \tilde {E}_{j,y} \lesssim |I_\T|\ \  ,
\end{equation}
with the implicit constant independent of $y$.

For each $R=I\times J \in \Omega_j$,  and let $I^l_j$ and $I^r_j$ denote the intervals
$$ I^l_j := (x^l_I - 2^{-jJ-1}, x^l_I - 2^{-jJ - 2})$$
$$ I^r_j := (x^r_I + 2^{-jJ - 2}, x^r_I + 2^{-jJ-1})$$
and define $R^l_j:=I^l_j\times J$ and $R^r_j:=I^r_j\times J$

Then the intervals $R^l_j$ are disjoint as $j$ varies in the integers
with $j\ge 0$ and $R$ varies in $\Omega_j$, 

Moreover, for any two such intervals $R_j^l$ and ${R'}_{j'}^l$ their "horizontal" distance\footnote{This distance is the same if measured at any $y\in J\cap J'$} 
$$\min_{(x,y)\in R_j^l,\;(x',y)\in {R'}_{j'}^l}|x-x'|$$
is at least $\max\{2^{-(jJ+3)},2^{-(j'J+3)}\}$. Similar statements hold for the rectangles $R_j^r$.
\end{lemma}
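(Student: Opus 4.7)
The plan is to extract, from the regularity of $\R_\T$ afforded by Lemma \ref{nicer-lemma}, an explicit description of the horizontal cross-section $\tilde E_{j,y}$ near each of its boundary points. First I would observe that at any non-dyadic level $y\in J_\T$, the partition $\R_\T$ induces a one-dimensional partition of $I_\T$ into $J$-dyadic intervals whose lengths equal the sidelengths of the corresponding squares. Horizontal neighbors in this 1D partition come from horizontally adjacent squares in $\R_\T$, so by Lemma \ref{nicer-lemma} their sidelengths are either equal or differ by a factor of exactly $2^J$ (the only ratio consistent with $J$-dyadicity and the $2^J$ constraint). A cell of length $2^{-iJ}$ lies in $\tilde E_{j,y}$ iff $i>j$. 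Combining the neighbor constraint with this membership criterion forces any point of $\partial\tilde E_{j,y}$ to sit between two neighbor cells with sidelengths exactly $2^{-(j+1)J}$ (inside) and $2^{-jJ}$ (outside).

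For the first part this description is used directly: each pair of differing-sidelength neighbor cells contributes to $\#\partial \tilde E_{j,y}$ for a single value of $j$, and its contribution to the weighted sum equals its larger sidelength. Since each cross-section cell acts as the ``larger'' member of at most two neighbor pairs, I obtain
\bas
\sum_{j\in\J_\T}2^{-jJ}\#\partial\tilde E_{j,y}\;\le\;2\sum_{\text{cells in the cross-section}}(\text{sidelength})\;=\;2|I_\T|.
\eas

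For the remaining parts I apply the same structural fact at each left endpoint $x_I^l$ with $R=I\times J\in\Omega_j$: the cross-section cell just to the left of $x_I^l$ has sidelength exactly $2^{-jJ}$, so the strip $(x_I^l-2^{-jJ},x_I^l)\times J$ lies outside $\tilde E_j$, and hence outside $\tilde E_{j'}$ for every $j'\ge j$. The cell just to the right of $x_I^l$ has sidelength exactly $2^{-(j+1)J}$ and is likewise outside $\tilde E_{j'}$ for every $j'\ge j+1$. Consequently, for $R'=I'\times J'\in\Omega_{j'}$ with $j\le j'$ and $J'\subseteq J$, the point $x_{I'}^l$ must satisfy either $x_{I'}^l\le x_I^l-2^{-jJ}$ or $x_{I'}^l\ge x_I^l+2^{-(j+1)J}$ when $j'>j$; when $j'=j$, distinct $J$-dyadic intervals of equal size force $|x_I^l-x_{I'}^l|\ge 2^{-jJ}$.

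Substituting these separations into the explicit definitions of $I^l_j$ and ${I'}^l_{j'}$ --- intervals of lengths $2^{-jJ-2}$ and $2^{-j'J-2}$ placed in specified quarters of the $2^{-jJ}$- or $2^{-j'J}$-strips to the left of $x_I^l$ and $x_{I'}^l$ respectively --- a direct arithmetic check gives both the disjointness and a horizontal distance of at least $2^{-jJ-2}$, which comfortably exceeds $\max\{2^{-(jJ+3)},2^{-(j'J+3)}\}$. The case $J\cap J'=\emptyset$ is trivial by $y$-separation, and the analysis of $R^r_j$ is symmetric. The main substantive step is the sidelength identification at cross-section boundary points; once this is in hand, the rest amounts to unfolding the definitions and an elementary case split.
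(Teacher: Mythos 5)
Your argument is, at its core, the same as the paper's: the decisive fact in both is that the strip of width $2^{-jJ}$ immediately to the left of $x^l_I$ (the square $(x^l_I-2^{-jJ},x^l_I)\times J$) is disjoint from $\tilde E_j$, hence from every $\tilde E_{j'}$ with $j'\ge j$ by nestedness, after which the disjointness and the $2^{-jJ-3}$ separation are elementary arithmetic; your proof of \eqref{reguboundary} by charging each boundary point of $\tilde E_{j,y}$ to the adjacent outside cell is a mild variant of the paper's route, which instead deduces \eqref{reguboundary} from the disjointness of the intervals $I^l_j$ together with their containment in $3I_\T$.

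One caveat about what you call the main substantive step. The claim that the two cells meeting at a point of $\partial\tilde E_{j,y}$ have sidelengths \emph{exactly} $2^{-(j+1)J}$ and $2^{-jJ}$, justified by ``the only ratio consistent with $J$-dyadicity and the $2^J$ constraint,'' overstates what Definition \ref{hull-definition} provides: the squares of $\R_\T$ are maximal \emph{dyadic} squares, not $J$-dyadic ones, so neighboring sidelength ratios may be any power of $2$ up to $2^J$ and the exact values are not forced. This does not sink your proof, because only the one-sided bounds are ever used: a cell not contained in $\tilde E_j$ has sidelength at least $2^{-jJ}$ (this alone gives the left-strip exclusion, which is exactly how the paper argues, using the definition of $\Omega_j$ and the fact from Lemma \ref{nicer-lemma} that $\tilde E_j$ is a union of $2^{-jJ}$-grid squares), and its inside neighbor has sidelength at least $2^{-(j+1)J}$ by the ratio bound of Lemma \ref{nicer-lemma}, which is all your right-strip exclusion needs for $j'>j$. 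In fact that refinement is dispensable: in the case $x^l_{I'}\ge x^l_I$ with $j'\ge j+1$ one already gets a horizontal distance of at least $2^{-jJ-2}-2^{-j'J-1}\ge 2^{-jJ-2}-2^{-(j+1)J-1}\ge 2^{-jJ-3}$ since $J$ is large, which is all the lemma asks. Likewise, in your accounting for \eqref{reguboundary} the contribution of a differing neighbor pair should be read as $2^{-jJ}\le$ the larger sidelength rather than equal to it; the bound $\lesssim|I_\T|$ is unaffected. So the proposal is correct in substance and follows the paper's mechanism; just state the cell-structure facts as inequalities rather than identities.
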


\begin{proof}
The proof of the lemma is a reprise of the arguments involved in the proof of Lemma 4.12 in \cite{MTT8}. The underlying philosophy is that for each $y$ the sets $\tilde {E}_{j,y}$ inherit much of the properties of the sets $\tilde {E}_{j}$. To illustrate this principle, we will sketch the argument. 
 
Fix $y$ which is not a dyadic point and let $\I_{y,j}$ be the collection of the connected components of $\tilde {E}_{j}\cap (\R\times\{y\})$. This collection is nothing else than the collection of intervals 
$$\{R\cap(\R\times\{y\}):R\in\Omega_j\}.$$
All the claims of the lemma will follow if we prove that $I\in \I_{y,j}$, $I'\in \I_{y,j'}$ and $j'\ge j$ imply that the distance between $I^l_j$ and ${I'}^l_{j'}$ is at least $2^{-jJ-3}$. Indeed, this will imply in particular the disjointness of $I^l_j$ and ${I'}^l_{j'}$, which in turn, will imply \eqref{reguboundary} (since all $I^l_j$ are contained in $3I_\T$). 

Let now $I$ and $I'$ as above, it remains to prove the claim about the distance. 
%If $j=j'$ the claim is% immediate, since $I$ and $I'$ must be separated by at least $2^{-j}$ (consequence of Lemma \ref{nicer-$lemma}). 
 Let $J$ be the dyadic interval of length $2^{-jJ}$ containing $y$. Thus $I\times J$ is an element of $\Omega_j$. Let $R_0=I_0\times J$ be the unique dyadic square of sidelegth $2^{-jJ}$ such that the right endpoint of $I_0$ coincides with the left endpoint of $I$. By the definition of $\Omega_j$ it will follow that $R_0\cap \tilde{E}_j=\emptyset$. Due to nestedness, this implies $R_0\cap \tilde{E}_{j'}=\emptyset$. The claim follows.

\end{proof}
As an immediate consequence of the above proposition we have that 
\begin{equation}
\label{reguboundaryforbigrectangles}
 \sum_{j \in \J_\T}\sum_{R\in\Omega_j}(|R_j^l|+|R_j^r|)  \lesssim |R_\T|.
\end{equation}

We introduce for each $j\ge 0$ and $2<p<\infty$ the weight function
$$\mu_{j,p}(x,y)=\sum_{j'\ge 0}2^{-\frac{|j'-j|}{100p}}\sum_{z\in \partial\tilde{E}_{j',y}}(1+2^{j'J}|x-z|)^{-100}.$$
This function will be used to quantify the extra gain we obtain when interacting different scales. 

Note that due to Lemma \ref{count-again} we have $\sup_{j}\|\mu_{j,p}\|_{L^{\infty}}\lesssim_p 1$ and moreover, due to \eqref{reguboundary} we have
\be{weil1}
\int_{R^2}\sum_{j}\mu_{j,p}\lesssim_p |R_T|
\end{equation}

We will construct the associated phase-space projections as follows:

\begin{proposition}
\label{phase-space}
For each $1\le i\le 3$ there exists a function $\Pi_i(F_i)$ such that 
\begin{itemize}
\item (Control by size) 
\begin{equation}
\label{fire-1}
\|\Pi_i(F_i)\|_{p_i}\lesssim |R_\T|^{1/p_i}\size_i^*(\T)^{\theta_i}
\end{equation}
\item ($\Pi_3(F_3)$ approximates $F_3$ on $\T$): For each $j\in \J_\T$ we have
\begin{equation}
\label{lacunary13499}
\tilde{\tilde{\chi}_j}\tilde{\pi}_jF_3=\tilde{S}_j\Pi_3(F_3)
\end{equation}
where $\tilde{S}_j$ is a suitable 2 dimensional Littlewood-Paley projection  to the frequency region $\{(\xi,\eta): |\xi|\le C_02^{(j+1)J},2^{jJ-J}\le|\eta|\le 2^{jJ+J}\}$
\item ($\Pi_i(F_i)$ approximates $F_i$ on $\T$, $i\in\{1,2\}$; the lacunary case): Assume $\T$ is lacunary. Then for each $i\in\{1,2\}$, $j_0\in \J_\T$
\begin{equation}
\label{lacunary13499ttgdt}
{\tilde{\chi}_j}\tilde{\pi}_jF_i=\tilde{S}_j\Pi_i(F_i)
\end{equation}
where $\tilde{S}_j$ is a suitable 1 dimensional Littlewood-Paley projection  to the frequency region $\{\xi: 2^{(j-1)J}\le |\xi|\le 4000\times2^{(j+1)J}\}$
\item ($\Pi_i(F_i)$ approximates $F_i$ on $\T$, $i\in\{1,2\}$; the overlapping case): Assume $\T$ is overlapping. Then for each $i\in\{1,2\}$, $j_0\in \J_\T$ and all $J-$ dyadic squares $R_0=I_0\times J_0$ with $j_{R_0}=j_0$ we have
\begin{equation}
\label{error-1}
\|\tilde{\chi}_{j_0}^{1/6}\tilde{\pi}_{j_0}(F_i-\Pi_i(F_i))\|_{L^{p_i}(R_0)}\lesssim \size_i^{*}(\T)^{\theta_i}|R_0|^{1/p_i-1}\int_{R_0}\tilde{\chi}_{R_0}^2\mu_{j_0,p_i}
\end{equation}
\item (Local control by size) For each $1\le i\le 3$, $j_0\in \J_\T$ and each $J$- dyadic $R_0$ with $j_R=j_0$ we have
\begin{equation}
\label{lacunary13497}
\|\tilde{\tilde \chi}_{j}^{1/6}\tilde{\pi}_{j}(\Pi_i(F_i))\|_{L^{p_i}(R_0)}\lesssim \size_i^{*}(\T)^{\theta_i}|R_0|^{1/p_i}.
\end{equation}
\end{itemize}
\end{proposition}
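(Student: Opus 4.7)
The plan is to construct $\Pi_i(F_i)$ separately in each of three geometric regimes---$i=3$, $(i\in\{1,2\},\ \T$ lacunary$)$, and $(i\in\{1,2\},\ \T$ overlapping$)$---and then verify the five claimed properties in each case. Throughout, the regularity of the sets $\tilde E_j$ from Lemmas \ref{nicer-lemma} and \ref{count-again} supplies spatial cutoffs compatible with the frequency projections $\tilde\pi_j$. For $i=3$, the intervals $[|\omega_{P_3}|,2|\omega_{P_3}|]$ in the $\eta$ variable are pairwise disjoint across scales, so set
\[
\Pi_3(F_3) \;:=\; \sum_{j\in\J_\T}\tilde\pi_j\bigl(\chi_{\tilde E_j,j}\,F_3\bigr).
\]
For $i\in\{1,2\}$ with $\T$ lacunary, the $\omega_{P_i}$'s are Whitney-separated from $\xi_\T$ so an analogous definition works. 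For $i\in\{1,2\}$ with $\T$ overlapping the $\omega_{P_i}$'s all contain $\xi_\T$ and no such orthogonality is available; instead, enumerating $\J_\T=\{j_1<j_2<\cdots\}$ and setting $\tilde\pi_{j_0}:=0$, use the telescoping recipe
\[
\Pi_i(F_i) \;:=\; \sum_{k}\chi_{\tilde E_{j_k},j_k}\bigl(\tilde\pi_{j_k}-\tilde\pi_{j_{k-1}}\bigr)F_i,
\]
each increment being frequency-supported in a thin annulus contained in $\bar\omega_\T$.

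The identities \eqref{lacunary13499} and \eqref{lacunary13499ttgdt} are algebraic consequences of the construction: the Littlewood--Paley multiplier $\tilde S_j$ annihilates all summands with $k\ne j$ by frequency disjointness, while on $E_{j,\T}\subseteq\tilde E_j$ the smoothed cutoff $\chi_{\tilde E_j,j}$ coincides with $\tilde\chi_j$ (respectively $\tilde{\tilde{\chi}}_j$) up to an error controlled by \eqref{e-diff} that is absorbed by the projection $\tilde S_j$.

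For the global bound \eqref{fire-1} in the lacunary/$i=3$ cases, apply the Littlewood--Paley square function inequality to the (genuinely orthogonal) family $\{\tilde\pi_j(\chi_{\tilde E_j,j}F_i)\}$; combine with \eqref{unsigned-estlacunary} on each $P\in\T$ with $j_P=j$ to pass from $\|\tilde\pi_j(\chi_{\tilde E_j,j}F_i)\|_2^2$ to $|E_{j,\T}|\,\size^*_i(\T)^2$, sum using the nesting of the $\tilde E_j$, and interpolate with the trivial $L^\infty$ bound $\|F_i\|_\infty\le 1$. In the overlapping case replace square function orthogonality with \eqref{unsigned-est-sup} applied tile-by-tile to the shrinking annuli $\omega_{P_i}\setminus\omega_{\xi_\T,R_P}$, again interpolating with $L^\infty$. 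The local bound \eqref{lacunary13497} is then just Lemma \ref{triv-bounds} applied to $\Pi_i(F_i)$, whose size on each scale is inherited from that of $F_i$ by the same single-tile estimates.

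The main obstacle is the overlapping error bound \eqref{error-1}. Fix $j_0\in\J_\T$ and $R_0=I_0\times J_0$ with $j_{R_0}=j_0$. The telescoping structure confines the error $F_i-\Pi_i(F_i)$ to boundary layers: on the interior of $\tilde E_{j_0}$ the sum already agrees with $F_i$ up to high-order smoothing error (which is dominated by $\mu_{j_0,p_i}$), while on $R_0\setminus\tilde E_{j_0}$ the relevant contribution comes from scales $j'\in\J_\T$ via the boundary rectangles $R_{j'}^l,R_{j'}^r$ of Lemma \ref{count-again}. On each such boundary rectangle, Lemma \ref{auximp21} provides a $P\in\T$ near it, and \eqref{unsigned-est-sup} bounds the local contribution by $|R_0|^{1/p_i}\size^*_i(\T)^{\theta_i}$ multiplied by the geometric factor $2^{-|j'-j_0|/(100 p_i)}(1+2^{j'J}\mathrm{dist}(\cdot,\partial\tilde E_{j',y}))^{-100}$. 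The spatial decay $\tilde\chi_{R_0}^2$ arises from the rapid decay of the kernel of $\tilde\pi_{j_0}$ against the cutoffs $\chi_{\tilde E_{j'},j'}$, and summing over $j'$ and over boundary points produces exactly $\mu_{j_0,p_i}$. Summability is guaranteed by \eqref{reguboundaryforbigrectangles}, which will be the most delicate accounting in the argument.
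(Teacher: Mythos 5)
Your constructions for $i=3$ and for lacunary trees are essentially the paper's (modulo placing the cutoff inside the projection and using $\tilde E_j$ rather than the $E_{j,\T}$-based cutoffs, which already turns the exact identities \eqref{lacunary13499}, \eqref{lacunary13499ttgdt} into approximate ones that a projection cannot simply ``absorb''), but the genuine gap is in the overlapping case, which is the heart of the proposition. Your telescoping definition $\Pi_i(F_i)=\sum_k\chi_{\tilde E_{j_k},j_k}(\tilde\pi_{j_k}-\tilde\pi_{j_{k-1}})F_i$ rests on the claim that each increment is frequency-supported in a thin annulus; this is false precisely because the tree is overlapping: every $\omega_P$ (hence every multiplier $m_{\omega_P}$) carries frequency content at $\xi_\T$, so consecutive differences of the model projections do not vanish near $\xi_\T$, and no Littlewood--Paley separation between scales is available --- if it were, the tree would effectively be lacunary and there would be nothing to prove. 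Moreover, telescoping the model projections $\tilde\pi_{j_k}$ rather than auxiliary multipliers $T_j$ that are identically $1$ on the low-frequency region means that even deep inside $\tilde E_{j_0}$ one has $\tilde\pi_{j_0}\Pi_i(F_i)\neq\tilde\pi_{j_0}F_i$ up to small errors, since $\tilde\pi_{j_0}\tilde\pi_{j_{k}}F_i$ does not reproduce $\tilde\pi_{j_0}F_i$ for adapted bump multipliers.

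The decisive missing idea is the fiberwise mean-zero correction. Once one localizes the fine-scale pieces spatially (by $\chi_R$, $R\in\Omega_j$, or by your smoothed cutoffs), their vanishing moments in $x$ are destroyed; since the overlapping projection $\tilde\pi_{j_0}$ does not vanish at $\xi_\T$, the resulting low-frequency content at scales $j'>j_0$ passes through $\tilde\pi_{j_0}$ with no gain in $j'-j_0$, and the sum over $j'>j_0$ is not controlled by $\mu_{j_0,p_i}$; this is exactly the failure mode that forces the paper to subtract the correctors $\phi^l_{R,j}+\phi^r_{R,j}$, supported in the disjoint boundary rectangles $R^l_j,R^r_j$ of Lemma \ref{count-again} and calibrated so that $\chi_R S_jF-\phi^l_{R,j}-\phi^r_{R,j}$ has mean zero on every $y$-fiber. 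It is this mean-zero structure (via the antiderivative $\nabla^{-1}$ and Bernstein) that produces the $L^2$ gain $2^{(j_0-j')J/2}$ in \eqref{est2localscalej}, which after interpolation with the non-decaying $L^\infty$ bound yields the $p_i$-dependent factor $2^{-|j'-j_0|/(100p_i)}$ built into $\mu_{j_0,p_i}$. In your sketch this factor is simply asserted to come from \eqref{unsigned-est-sup}, but that inequality is a pure size bound with no scale-separation gain, and without restoring the moments no such gain exists; your construction therefore does not satisfy \eqref{error-1} as written, and the error analysis for \eqref{d5}--\eqref{d6}-type terms cannot be closed. To repair the proposal you would need both the auxiliary multipliers $T_j$ (equal to $1$ at low frequencies) in place of the model projections and the boundary-rectangle mean-zero correctors, i.e.\ essentially the paper's construction.
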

\begin{proof}
We define 
$$\Pi_3(F_3):=\sum_{j\in\J_\T}\tilde{\tilde\chi_j}\tilde\pi_jF_3,$$
and in the case the tree is lacunary and $i\in\{1,2\}$
$$\Pi_i(F_i):=\sum_{j\in\J_\T}{\tilde\chi_j}\tilde\pi_jF_i.$$
As mentioned before, we only need to smoothen out the spatial component which is associated with a frequency projection, and we do that in order to preserve frequency localization, and thus orthogonality. If no frequency projections are present (and this is the case with the $y$ component of both $F_1$ and $F_2$), then rough spatial cutoffs suffice.

The proof that the above projections satisfy all the required estimates follows exactly the same lines as the proof in the lacunary case of Proposition 7.4 in \cite{MTT8}. This is a fairly easy exercise compared to the overlapping case presented next, since (for $J$ large enough) the portions of the projections corresponding to different scales are pairwise orthogonal. We omit the details.

We now construct the projections in the case $i\in\{1,2\}$, when the tree is assumed overlapping. The construction follows again closely the lines of that in the non-lacunary case in Proposition 7.4 in \cite{MTT8}, with a few modifications. We include the argument for completeness.

We start by defining the projection. The projections turn out to be identical for $i=1$ and $i=2$. We will drop the $i$ index on the function $F_i$.

For each real number $j$, let $T_j$ be a one dimensional Fourier multiplier (defined, say, by
dilations of a fixed multiplier) whose symbol is supported in the frequency 
region 
$\{|\xi| \le 2^{2+jJ}\}$, and equals 1 for $\{|\xi| \le 2^{1+ jJ} \}$.     
Let $S_j$ be the associated Littlewood-Paley  projections $S_j := T_j - T_{j-1}$. We may assume that the kernels of $T_j$ and $S_j$ are real and even. These multipliers will act on the first variable of  functions on $\R^2$.

A first guess as to the construction of $\Pi_i(F)$ would be
$$ \tilde \Pi_i(F)(x,y):= \chi_{\tilde E_{0}} T_{j(x,y)}F,$$
where for each $(x,y) \in \tilde E_{0}$ we define the integer-valued function $j(x,y)$ by
$$j(x,y) := \max \{j\ge 0 : (x,y) \in \tilde E_{j} \}$$

One can expand $\tilde \Pi_i(F)$ as a telescoping series:
\begin{align}
\label{telescope}
\tilde \Pi_i(F) &= \chi_{\tilde E_0} T_0 F + \sum_{1 \leq j} \chi_{\tilde E_j} S_{j} F\\&=
\label{telescope1}\chi_{\tilde E_0} T_0 F + \sum_{1 \leq j}\sum_{R\in\Omega_j}\chi_{R} S_{j} F.
\end{align}
This proposed projection turns out to obey \eqref{fire-1}, but does not obey \eqref{error-1} due to the poor frequency localization properties of the characteristic functions $\chi_{R}$ in \eqref{telescope1}.  Specifically, the cutoffs destroy the vanishing moments of the $S_{j}F$, and this will cause a difficulty when trying to sum in $j$ because the projection $\tilde \pi_{j_0}$ is non-lacunary.  

To get around this problem we shall modify each term  $\chi_{R} S_{j} F$ to have a zero mean on each $y$ fiber.  In order that these modifications do not collide with each other, we shall place them in disjoint rectangles, namely in the rectangles $R_j^r$ and $R_j^l$ constructed in Lemma \eqref{count-again}. 

Let $R=I\times J\in \Omega_j$ for some $j\ge 1$.
Let $\phi^l_{I,j}$ and $\phi^r_{I,j}$ be some functions supported in $I^l_j$ and  $I^r_j$, respectively, uniformly bounded by 10 and with total mass 
$$ \int \phi^l_{I,j}(x)dx=\int \phi^r_{I,j}(x)dx=2^{-jJ}.$$

Decompose $\chi_I$ as $\chi_I = H^l_I + H^r_I$, where $H^l_I(x) := H(x - x^l_I)$ and $H^r_I(x) := -H(x - x^r_I)$ are shifted Heaviside functions. For each $y\in J$ define the quantities $c^l_{I,j}(y)$ and $c^r_{I,j}(y)$ by
$$
c^l_{I,j}(y):= 2^{jJ} \int H^l_I(x) S_{j} F(x,y)dx\ \ ,
$$
$$ c^r_{I,j}(y):= 2^{jJ} \int H^r_I(x) S_{j} F(x,y)dx.$$

Define now the functions $\phi^l_{R,j}(x,y):=\phi^l_{I,j}(x)\times c^l_{I,j}(y)$ and $\phi^r_{R,j}(x,y):=\phi^r_{I,j}(x)\times c^r_{I,j}(y)$. Note that they are supported on  $R_j^l$ and $R_j^r$, respectively and that
\be{meanzerofiberwise}
\int[\chi_{R}(x,y) S_{j} F(x,y)-\phi^l_{R,j}(x,y)-\phi^r_{R,j}(x,y)]dx=0
\end{equation}
for each $y\in J$.

We can now define the correct form of the projections
$$\Pi_i(F) := \tilde \Pi(F) - \sum_{1 \leq j} \sum_{R \in \Omega_j} (\phi^l_{R,j} + \phi^r_{R,j}).$$

The control on the functions $\phi^l_{R,j}$ (and a similar control holds on $\phi^r_{R,j}$, too) is provided by the following lemma:

\begin{lemma}\label{clij-est}
Let $R=I\times J \in \Omega_j$.  
Then we have the estimate
\be{big-bound}
|c^l_{I,j}(y)| \lesssim  2^{jJ} 
\int \frac{|S_{j} F(x,y)| dx}{(1 + 2^{jJ} |x-x^l_I|)^{100}},
\end{equation}
\be{clb-2}
\|\phi^l_{R,j}\|_2\lesssim \size_i^{*}(\T)|R_j^l|^{1/2}
\end{equation}
and
\be{clb}
\|\phi^l_{R,j}\|_{\infty}\lesssim 1
\end{equation}
\end{lemma}
\begin{proof}
Estimate \eqref{big-bound} was proved in Lemma 8.1 in \cite{MTT8}. Note also that \eqref{clb} is a consequence of \eqref{big-bound} and the fact that $\|F\|_{\infty}\le 1.$ 

It remains to prove \eqref{clb-2}. Denote by $R'$ the $J$-dyadic square  $[x_I^l,x_I^l+2^{-jJ}]\times J$. Note that $R'$ is a subset of $\tilde{E}_j$. By Lemma \ref{auximp21} it follows that there exists $P\in\T$ with $R_P\subseteq 10R'$. Using this and the fact that $S_j$ is associated with a multiplier adapted to $5\omega_{\xi_\T,R'}$, \eqref{clb-2} will now follow from \eqref{unsigned-est-sup}.
\end{proof}

We now prove \eqref{fire-1} in the case $i\in\{1,2\}$ and the tree is overlapping. It suffices by interpolation to prove that
\be{Linftyforproj}
\|\Pi_i(F)\|_{\infty}\lesssim 1
\end{equation}
and 
\be{Ltwoforproj}
\|\Pi_i(F)\|_2\lesssim |R_\T|^{1/2}\size_i^{*}(\T).
\end{equation}

Let us start with \eqref{Linftyforproj}. By \eqref{clb} and disjointness of supports, the contribution of $\phi^l_{R,j}$ and $\phi^r_{R,j}$ is acceptable. It remains to prove that 
$$\|\tilde{\Pi}(F)\|_{\infty}=\|T_{j(x,y)}F(x,y)\|_{\infty}\lesssim 1.$$
Note however that this is an immediate consequence of \eqref{supbound}.

Next, we focus on \eqref{Ltwoforproj}. Again, it suffices to prove
\be{rrtyer5}
\|\tilde{\Pi}_i(F)\|_2\lesssim |R_\T|^{1/2}\size_i^{*}(\T)
\end{equation}
and
\be{rrtyer5kpzx}
(\sum_{j\ge 1}\sum_{R\in \Omega_j}(|R_j^l|+|R_j^r|))^{1/2}\lesssim |R_\T|^{1/2}
\end{equation}
(by taking into account \eqref{clb-2} and the disjointness of the supports of
$\phi^l_{R,j}$ and $\phi^r_{R,j}$). The last inequality was however observed in \eqref{reguboundaryforbigrectangles}.

To prove \eqref{rrtyer5}  we expand
$$ \tilde \Pi_i(F) = \sum_{0 \leq j } \chi_{\tilde E_j \backslash \tilde E_{j+1}} T_{j} F
= \sum_{0 \leq j } \sum_{R \cap \tilde E_j \backslash \tilde E_{j+1}\neq \emptyset: j_R = j\atop{R\;is \; J-dyadic}} \chi_{R\cap \tilde E_j \backslash \tilde{E}_{j+1}} 
T_{j} F.$$

As $j$ and $R$ vary in the above sum, the sets 
$R\cap \tilde E_j\backslash \tilde E_{j+1}$ are pairwise disjoint, hence it suffices to show
$$ 
\sum_{0 \leq j } \sum_{R \cap \tilde E_j \backslash \tilde E_{j+1}\neq \emptyset: j_R = j\atop{R\;is\; J-dyadic}} 
\| T_{j} F \|_{L^2(R)}^2 \lesssim |R_\T| \size^*_i(\T)^2$$

For each $j$ and $R$ in this sum there is a $J$- dyadic 
square $R'\subseteq R$ with $j_{R'}=j_R+1$ which is contained in
$\tilde E_j \backslash \tilde E_{j+1}$. 
This follows from Lemma \ref{nicer-lemma}.
As $j$ and $R$ vary, these intervals $R'$ are pairwise disjoint. Hence
it suffices to show that
$$ \| T_{j} F \|_{L^2(R)} \lesssim |R|^{1/2} \size^*_i(\T)$$
for all $j$, $R$ in the above sum.  But for such $j$, $R$ we can find a multi-tile 
$P \in \T$ with $R_P\subseteq 10R$ by Lemma \ref{auximp21}.  
The claim then follows from \eqref{unsigned-est-sup}, since $T_j$ is associated with a multiplier adapted to $5\omega_{\xi_\T,R}$.  
This proves \eqref{rrtyer5} and thus \eqref{Ltwoforproj}.
The proof of \eqref{fire-1} is now complete.

The estimate \eqref{lacunary13497} will follow from \eqref{error-1}, Lemma \ref{triv-bounds}, the triangle inequality, and the fact that the $\mu_{j,p}$ are uniformly bounded.  Thus it only remains to verify \eqref{error-1}.

Fix $j_0\ge 0$ and $R_0$ such that $j_{R_0}=j_0$.  From the frequency support of $\tilde \pi_{j_0}$ we may replace $F - \Pi_i(F)$ with $T_{j_0} F - \Pi_i(F)$.

We now decompose
\begin{align}
T_{j_0} F - \Pi_i(F) 
=& \chi_{\R \backslash \tilde E_{j_0}} T_{j_0} F 
\label{d1}\\
&- \chi_{\R \backslash \tilde E_{j_0}} \tilde \Pi_i(F)
\label{d2}\\
&+ \chi_{\R \backslash \tilde E_{j_0}} 
\sum_{1 \leq j \leq j_0} \sum_{R \in \Omega_j}\phi^l_{R,j}
\label{d3}\\
&+ \chi_{\R \backslash \tilde E_{j_0}} \sum_{1 \leq j \leq j_0} \sum_{R \in \Omega_j} \phi^r_{R,j}
\label{d4}\\
&- \sum_{j_0 < j } \sum_{R=I\times J \in \Omega_j} 
G^l_{R,j}\label{d5}\\
&- \sum_{j_0 < j } \sum_{R=I\times J \in \Omega_j} 
G^r_{R,j}.\label{d6}
\end{align}
where 
$$G^l_{R,j}(x,y):=H^l_{I}(x)1_{J}(y)S_{j}F(x,y) - \phi^l_{R,j}(x,y)$$
$$G^r_{R,j}(x,y):=H^r_{I}(x)1_{J}(y)S_{j}F(x,y) - \phi^r_{R,j}(x,y)$$

The first four terms are treated exactly like in \cite{MTT8}. Since they are supported outside $\tilde E_{j_0}$, only the $\mu_{j_0,p_i}^{j\le j_0}$ part of $\mu_{j_0}$ will enter the estimates, where we define
$$\mu_{j_0,p_i}^{j\le j_0}(x,y)=\sum_{0\le j\le  j_0}2^{-\frac{|j_0-j|}{100p_i}}\sum_{z\in \partial\tilde{E}_{j,y}}(1+2^{jJ}|x-z|)^{-100}.$$
Note that $\mu_{j_0,p_i}^{j\le j_0}$ is constant on $J$-dyadic squares with sidelength $2^{-j_0J}$, in particular on $R_0$. The estimates for the first four terms above will follow by interpolation from the following two estimates
$$
\| \tilde\chi_{j_0}^{1/6}\tilde \pi_{j_0} (\eqref{d1} - \eqref{d2} + \eqref{d3} + \eqref{d4}) \|_{L^2(R_0)}\lesssim \frac{1}{|R_0|^{1/2}}\size^{*}(\T)\int_{R_0}\tilde{\chi}_{R_0}^2\mu_{j_0,p_i}^{j\le j_0},$$

$$
\| \tilde\chi_{j_0}^{1/6}\tilde \pi_{j_0}(\eqref{d1} - \eqref{d2} + \eqref{d3} + \eqref{d4}) \|_{L^\infty(R_0)}\lesssim \frac{1}{|R_0|}\size^{*}(\T)\int_{R_0}\tilde{\chi}_{R_0}^2\mu_{j_0,p_i}^{j\le j_0}.$$ 
We omit the details. 

We however sketch the proof of the estimate for the terms \eqref{d5} and \eqref{d6}, since this is the more delicate case. Again, all the needed technology is already in \cite{MTT8}, but some extra care is needed, since in this case $R_0$ interacts with scales $j$ greater than $j_0$. Due to this, it is $\mu_{j_0,p_i}^{j> j_0}:=\mu_{j_0,p_i}-\mu_{j_0,p_i}^{j\le j_0}$ that will enter our estimates. Since $\mu_{j_0}^{j> j_0}$ is no longer constant on $R_0$, for each $j>j_0$ we will have to split $R_0$ in rectangles of the form $I_0\times J'$ with $|J'|=2^{-jL}$ and  get estimates on each of these rectangles, which would then add up to the desired global estimate (i.e. on the whole $R_0$).

The factor $\tilde\chi_{j_0}^{1/6}$ will be useless here, the decay will come from other sources.
More precisely, we will prove that for each $j>j_0$ and each $R:=I\times J\in\Omega_j$ we have
\be{est1localscalej}
\|\tilde{\pi}_{j_0}G^l_{R,j}\|_{L^{\infty}(R_0)}\lesssim \frac{1}{|R_0|}\int_{y\in J}\int_{x\in \R}\tilde\chi_{R_0}^2(x,y)(1+2^{jJ}|x-x_I^l|)^{-100}dxdy
\end{equation}
\be{est2localscalej}
\|\tilde{\pi}_{j_0}G^l_{R,j}\|_{L^{2}(R_0)}\lesssim \frac{2^{(j_0-j)J/2}}{|R_0|^{1/2}}\size_i^{*}(\T)\int_{y\in J}\int_{x\in \R}\tilde\chi_{R_0}^2(x,y)(1+2^{jJ}|x-x_I^l|)^{-100}dxdy
\end{equation}
with similar estimates for $G^r_{R,j}$. If we have these estimates, all we have to do is interpolate them for each $j$, and then add the resulting estimates over all $j>j_0$ and all $R\in\Omega_j$. Note that we do not get any extra decay  of the form $2^{(j_0-j)\epsilon}$ for the $L^{\infty}$ estimate in \eqref{est1localscalej}. We do however obtain such a decay for the $L^{2}$ estimate in \eqref{est2localscalej}, and by interpolation, the decay factor will have a $p_i$ dependence. This is the main source of the $p_i$ dependence of the function $\mu_{j_0,p_i}$.

Fix $j>j_0$ and $R=I\times J\in\Omega_j$. We start by proving an estimate for $G^l_{R,j}$.  Observe from Fourier support considerations that for each $y$
$$ \tilde \pi_{j_0} ((T_{j-1} H^l_I) S_{j} F) = 0,$$
in particular, $(T_{j-1} H^l_I) S_{j} F$ has mean zero for each $y$.  It thus suffices to get estimates for
\begin{align*}
F_{j,R}^l(x,y)&:= G_{j,R}^l(x,y)-(T_{j-1} H^l_I(x)\chi_J(y))S_{j} F(x,y)\\&= [(1 - T_{j-1})H^l_I(x)\chi_J(y)] S_{j} F(x,y)-\phi^l_{R,j}.
\end{align*}
\end{proof}

We aim first at showing that for each $y\in J$ and each $x\in\R$
\be{estgungtoondgt}
|F_{j,R}^l(x,y)|\lesssim 2^{jJ/2} 
(1 + 2^{jJ} |x'-x^l_I|)^{-200}\|\frac{|S_{j} F(x',y)|}{(1 + 2^{jJ} |x'-x^l_I|)^{50}}\|_{L^2(x')}.
\end{equation}

This estimate is clear for the $\phi^l_{R,j}$ part of $F_{j,R,i}^l$, due to \eqref{big-bound}. It remains to prove it for $[(1 - T_{j-1})H^l_I(x)\chi_J(y)] S_{j} F(x,y)$. 

Fix $y\in J$.
From repeated integration by parts we have the pointwise estimate
$$ |(1 - T_{j-1})H^l_I(x)| \lesssim (1 + 2^{jJ} |x-x^l_I|)^{-250}$$
so it suffices to show that
$$
|S_{j} F(x,y)| \lesssim 2^{jJ/2}(1 + 2^{jJ} |x-x^l_I|)^{50}\|\frac{S_{j} F(x',y)}{(1 + 2^{jJ} |x'-x^l_I|)^{50}}\|_{L^2(x')}.$$
Note however that this is an immediate consequence of Lemma \ref{bernstein}.

We next use the pointwise bounds for $F_{j,R}^l(x,y)$ in \eqref{estgungtoondgt} and the fact that $F_{j,R}^l(\cdot,y)$ has mean zero to get the following bound for the antiderivative $\nabla^{-1}F_{j,R}^l$

\be{estgungtoondgtxxxgggrr}
|\nabla^{-1}F_{j,R}^l(x,y)|\lesssim 2^{-jJ/2} 
(1 + 2^{jJ} |x-x^l_I|)^{-199}\|\frac{|S_{j} F(x',y)|}{(1 + 2^{jJ} |x'-x^l_I|)^{50}}\|_{L^2(x')}.
\end{equation} 

We continue by noting that 
$$\tilde{\pi}_{j_0}F_{j,R}^l=\nabla\tilde{\pi}_{j_0}(\nabla^{-1}F_{j,R}^l).$$
An easy computation shows that if $x_0\in I_0$ then
\begin{align}
|\nabla\tilde{\pi}_{j_0}(\nabla^{-1}F_{j,R}^l)(x_0,y)|&\lesssim 2^{2j_0J}\int|\tilde\chi_{I_0}^{2}(x')\nabla^{-1}F_{j,R}^l(x',y)dx'|\\&
\label{finalggest6678}
\lesssim 2^{(2j_0-\frac{j}{2})J}\|\frac{|S_{j} F(x',y)|}{(1 + 2^{jJ} |x'-x^l_I|)^{50}}\|_{L^2(x')}\int\chi_{I_0}^{2}(x')(1 + 2^{jJ} |x'-x^l_I|)^{-199}dx' 
\end{align}

The estimate \eqref{est1localscalej} now follows from the above and the fact that 
$$\|\frac{|S_{j} F(x',y)|}{(1 + 2^{jJ} |x'-x^l_I|)^{50}}\|_{L^2(x')}\lesssim 2^{-jJ/2}.$$

To prove  \eqref{est2localscalej} we first denote by $R'$ the $J$- dyadic rectangle defined as $[x_I^l,x_I^l+2^{-jJ}]\times J$. We further observe that  \eqref{finalggest6678} implies that
\begin{align*}
\|\tilde{\pi}_{j_0}G^l_{R,j}\|_{L^{2}(R_0)}&=\|\tilde{\pi}_{j_0}F^l_{R,j}\|_{L^{2}(R_0)}\\&\lesssim 2^{(\frac{3j_0}{2}-\frac{j}{2})J}\|\frac{S_{j} F(x,y)}{(1 + 2^{jJ} |x-x^l_I|)^{50}}\|_{L^2_{x,y}(\R\times J)}\int\chi_{I_0}^{2}(x)(1 + 2^{jJ} |x-x^l_I|)^{-199}dx\\&\lesssim 2^{(\frac{3j_0}{2}-\frac{j}{2})J}\|\tilde\chi_{R'}^{10}S_{j} F\|_2\int\chi_{I_0}^{2}(x)(1 + 2^{jJ} |x-x^l_I|)^{-199}dx\\&\lesssim 2^{(\frac{3j_0}{2}-\frac{j}{2})J}\size_i^{*}(\T)\int_{j\in J}\int\tilde\chi_{R_0}^{2}(x,y)(1 + 2^{jJ} |x-x^l_I|)^{-199}dxdy\\&\le\frac{2^{(j_0-j)J/2}}{|R_0|^{1/2}}\size_i^{*}(\T)\int_{y\in J}\int_{x\in \R}\tilde\chi_{R_0}^2(x,y)(1+2^{jJ}|x-x_I^l|)^{-100}dxdy
\end{align*}
where the penultimate inequality follows from Lemma \ref{auximp21} and \eqref{unsigned-est-sup}. This end the proof of \eqref{est2localscalej} and of Proposition \ref{phase-space}.

\subsection{The cone $\int \Psi=0$}\ \
\label{sub:2}

Our goal now is to prove \eqref{conedecompuuh} in the case $\int \Psi=0$. We will first note that the techniques developed in Section \ref{sub:1}, combined with the type of analysis that solved the one dimensional Bilinear Hilbert Transform (see \cite{LT1},\cite{LT2}, and also \cite{thielelectures} for a more detailed exposition),  can address this case, too. Here is a brief explanation why.

Note that the model sum in \eqref{conedecompuuh} represents a (one dimensional) Bilinear Hilbert Tranform in the $x$ coordinate. This is due to the special cancellation condition $\Psi=0$. One works with one and a half dimensional trees and phase-space projections as in Section \ref{sub:1}. The difference is that there will be no orthogonality coming from the $y$ component of the third function (as was the case before; in particular trees are not automatically 3-lacunary), but rather from the special localization in the $x$ component (the same type of localization as  in the case of the one dimensional Bilinear Hilbert Transform). The same sizes $\size_i^{*}$ will control phase-space projections of $F_i$ when $i\in\{1,2\}$. The property of being lacunary or overlapping will be determined only by the $x$ component. For each $i\in\{1,2,3\}$ we will have trees which are $i$- overlapping, and they will necessarily be $i'$ lacunary for each $i'\not=i$. Some of these features are present in the alternative argument we present below.      

We choose to present this alternative argument, since it provides a slightly different angle, and since it is "cleaner" for exposition purposes. One of its advantages is that it avoids\footnote{We will be able to discretize in such a way that the phase-space projections enter the picture in a natural way} the technicalities behind phase-space projections, that were present in the previous section. 

\subsubsection{Discretization}\ \
\label{discrtsyshsbsgdtd6}

The collection $\P$ of multi-tiles in this context will consist of $P=I_P\times\omega_{P_1}\times\omega_{P_2}\times\omega_{P_3}$ with the following properties
\begin{definition}
\begin{itemize}\ \

\item Each component $\omega_{P_i}$ of some $P\in \P$ determines uniquely the other two (frequency) components $\omega_{P_j}$ of $P$.
\item $\omega_{P_i}$ are elements of a shifted dyadic grid, while $I_P$ is an element of the standard dyadic grid
\item $|\omega_{P_1}|=|\omega_{P_2}|=|\omega_{P_3}|=|I_P|^{-1}$
\item $|\omega_{P_i}|=2^{Jj}$ for some $j\in\Z$, where $J\in \N$ is a fixed large enough  natural number. Such intervals will be referred to a $J$- dyadic.
\item $|\omega_i|=|\omega_i'|$ and $\omega_i\not=\omega_i'$ imply $\dist(\omega_i,\omega_i')\ge 2^{J}|\omega_i|$
\item If for some $\xi\in\R$ we denote $(\xi_1,\xi_2,\xi_3):=(\xi,\xi,-2\xi)$, then $\xi_i\in 2\omega_{P_i}$ for some $i\in\{1,2,3\}$ implies that $\xi_j\in C_0\omega_{P_j}\setminus 2\omega_{P_j}$ for each $j\not=i$, where $C_0$ is some large universal constant. 
\end{itemize}
\end{definition}

If $P$ is a multi-tile, we denote by $P_i:=I_P\times \omega_{P_i}$ the associated tiles.

Let $\varphi$ be a function whose Fourier transform is adapted to $[-1/2,1/2]$. We will denote by $\varphi_{P_i}$ the wave-packet localized in the tile $I_P\times \omega_{P_i}$, that is
$$\varphi_{P_i}(x)=\frac{1}{|I_P|^{1/2}}\varphi\left(\frac{x-c(I_P)}{|I_P|}\right)e^{ic(\omega_{P_i})x}.$$
We will also use the notation 
$$\psi_J(x)=\frac{1}{|J|^{1/2}}\psi\left(\frac{x-c(J)}{|J|}\right).$$
By using standard reductions, in order to get bounds in this case for \eqref{conedecompuuh}, it suffices to prove the boundedness of the model sum
\begin{equation*}
\int_\R\sum_{P}\frac{1}{|I_P|}|\langle F_1(x',y),\varphi_{P_1}(x')\rangle_{x'}\langle F_2(x',y),\varphi_{P_2}(x')\rangle_{x'}|\sup_{\psi_{J_{y,P}}}|\langle F_3(x',y'),\varphi_{P_3}(x')\psi_{J_{y,P}}(y')\rangle_{x',y'}|dy,
\end{equation*}
where $J_{y,P}$ is the unique dyadic interval of length $|I_P|$ containing $y$, and the supremum above is taken over all $\psi$ with Fourier transform adapted to $[-1/2,1/2]$.

We will change the angle a bit and rewrite the above expression in a slightly different way.

\begin{definition}
A hyper-multi-tile $(P,J)$ is a multi-tile with an extra spatial component $J_P$, where $J_P$ is dyadic and $|I_P|=|J|$.
\end{definition} 
A hyper-multi-tile also has an extra frequency component, that is $[-\frac{1}{2}|J|^{-1},\frac{1}{2}|J|^{-1}]$. Since this component is implicit, we will omit it, and always write $(P,J)$. 

A hyper-multi-tile will serve the purpose of localizing in time-frequency 2 dimensional wave-packets like $\varphi_{P_i}\times \psi_J$.

Let $\P_{hyper}$ be an arbitrary finite collection of hyper-multi-tiles. For each $y$ we denote by $\P_y$ the collections of multi-tiles $P$ such that $(P,J_{y,P})\in\P_{hyper}$. 

To simplify notation, for each y and each $P\in\P_y$ define
$$a_{P_i}(y)=|\langle F_i(x',y),\varphi_{P_i}(x')\rangle_{x'}|$$
for $i\in\{1,2\}$ and
$$a_{P_3}(y)=\frac{1}{|I_P|^{1/2}}\sup_{\psi_{J_{y,P}}}|\langle F_3(x',y'),\varphi_{P_3}(x')\psi_{J_{y,P}}(y')\rangle_{x',y'}|.$$
Define also for each $(P,J)\in\P_{hyper}$
$$b_{P,J}:=\sup_{\psi_{J}}|\langle F_3(x',y'),\varphi_{P_3}(x')\psi_{J}(y')\rangle_{x',y'}|,$$
and 
$$T(F_1,F_2,F_3)(x,y):=\sum_{P\in\P_y}\frac{1}{|I_P|^{3/2}}\prod_{i=1}^3a_{P_i}(y)\chi_{I_P}(x)=$$
\be{dschlgueiwoy;qdmc';WOCKWEO}
=\sum_{(P,J)\in \P_{hyper}}\frac{1}{|I_P|^2}\prod_{i=1}^2a_{P_i}(y)b_{P,J}\chi_{I_P}(x)\chi_{J}(y).
\end{equation}

A standard limiting argument shows that it suffices to prove
\begin{theorem}
\label{newintusedthm}
For each $2<p_i<\infty$ with $\frac{1}{p_1}+\frac{1}{p_2}+\frac{1}{p_3}=1$  we have
$$\|T(F_1,F_2,F_3)\|_{L^1_{x,y}}\lesssim \prod_{i=1}^{3}\|F_i\|_{p_i}.$$
Moreover, the implicit constant in the above inequality does not depend on $\P_{hyper}$.
\end{theorem}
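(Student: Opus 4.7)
The plan is to combine the tree-selection proof of the one-dimensional Bilinear Hilbert Transform (\cite{LT1}, \cite{LT2}) with a Carleson-type linearization (\cite{LT3}) for the supremum inside $b_{P,J}$, while the $y$-variable is treated essentially as a parameter. Since $T(F_1,F_2,F_3)$ is nonnegative by construction, its $L^1_{x,y}$ norm equals the corresponding spacetime integral. First I would linearize the suprema: for each $(P,J)\in\P_{hyper}$ pick $\psi_{P,J}$ (nearly) attaining $b_{P,J}$, so that after dropping the absolute values the integrated form becomes trilinear in $F_1,F_2,F_3$. Multilinear interpolation (as in \cite{janson}) then reduces matters to a restricted-type estimate with $F_i=\chi_{E_i}$.

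Next I would organize $\P_{hyper}$ into one-dimensional trees in the $x$-frequency direction, in the spirit of Definition \ref{deftreesg66tgsjhu}: a tree $(\T,\xi_\T,I_\T)$ consists of hyper-multi-tiles $(P,J)$ with $I_P\subseteq I_\T$ and $\xi_\T\in\bar\omega_{P_i}$ for a distinguished index $i\in\{1,2,3\}$, classified as $i$-overlapping or $i$-lacunary exactly as in the one-dimensional BHT. For $i\in\{1,2\}$ I would define $\size_i^{*}(\T)$ as the $\ell^2$ aggregate of $|a_{P_i}(y)|^2/|I_P|$ over $P\in\T$ with $y$ averaged against $|J_\T|^{-1}\chi_{J_\T}$, augmented by an overlapping BMO-type variant; for $i=3$ the size $\size_3^{*}(\T)$ would be the analogous aggregate of $|b_{P,J}|^2/|I_P|$, so that the Carleson-type supremum is absorbed into the size itself.

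The core local ingredient is a single-tree estimate bounding the contribution of $\T$ to $\|T\|_{L^1_{x,y}}$ by $|R_\T|\prod_{i=1}^3\size_i^{*}(\T)^{\theta_i}$ with $\theta_i=2/p_i$; this is proved by Cauchy--Schwarz fiberwise in $y$, together with orthogonality of wave-packets across scales inside a tree, exactly as in Proposition \ref{mainsingletreeestimate5}. The accompanying global ingredient is a Bessel-type inequality analogous to Proposition \ref{selection}: from any convex collection $\P'\subseteq\P_{hyper}$ with $\size_i^{*}(\P')<2\lambda$ one extracts disjoint convex trees whose tops satisfy $\sum|R_\T|\lesssim\lambda^{-2-\epsilon}\|F_i\|_2^2$ and removes them, lowering $\size_i^{*}$ below $\lambda$. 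Granted these two ingredients, the standard scale summation argument of Section \ref{sec:34} closes the estimate.

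The main obstacle is the $i=3$ Bessel inequality: because $b_{P,J}$ is a supremum, the linearizing choices $\psi_{P,J}$ for distinct trees are not orthogonal, so the $L^2$ orthogonality used for $i\in\{1,2\}$ does not apply directly. Extracting the required counting-function bound demands a secondary tree-selection argument in the $y$-frequency direction, which is precisely the heart of Carleson's theorem. As observed in the introduction, the present operator specializes to the Carleson operator, so this step essentially reduces to the argument of \cite{LT3}; the remaining pieces are routine adaptations of one-dimensional BHT technology to this setting.
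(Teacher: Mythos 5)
Your outline reproduces the general tree/size/Bessel template, but the step you yourself flag as the main obstacle --- the Bessel inequality for the third component --- is both left unproved and misdiagnosed, and that is exactly where the content of this theorem lies. The supremum defining $b_{P,J}$ is taken over bumps $\psi_J$ whose Fourier transform is adapted to the fixed window $[-\frac12|J|^{-1},\frac12|J|^{-1}]$: there is no modulation in the $y$-frequency, so no Carleson-type selection ``in the $y$-frequency direction'' is needed, and the operator of this subsection does not specialize to Carleson's operator (that is the operator of Section \ref{sec:semidegworse}; the one treated here specializes to the one-dimensional Bilinear Hilbert Transform). The genuine difficulty is structural: the coupling $a_{P_1}(y)a_{P_2}(y)a_{P_3}(y)$ must be estimated fiberwise in $y$, while the natural control on the coefficients $b_{P,J}$ is two-dimensional. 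Your plan --- sizes for $i\in\{1,2\}$ averaged in $y$ over a tree top, followed by a fiberwise Cauchy--Schwarz --- does not yield the single-tree estimate, because a $y$-averaged size carries no pointwise-in-$y$ information about the fibered square functions entering the product; and transplanting Propositions \ref{mainsingletreeestimate5} and \ref{selection} verbatim is not legitimate here, since their source of orthogonality (the $\eta$-localization to $[|\omega_3|,2|\omega_3|]$) is absent in this cone.

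The paper's proof proceeds differently on both counts. For the third function it selects two-dimensional trees of hyper-multi-tiles (Lemma \ref{bessel2dimght86svbns}), with BMO control of the counting function and of the tree square functions, and then uses John--Nirenberg together with the removal of exceptional sets of total measure $O(1)$ to upgrade these to fiberwise $L^\infty$ bounds valid for a.e.\ $y$; the first two functions are handled by a purely one-dimensional, fixed-$y$ selection with analogous BMO bounds. The trilinear ``single-tree'' input is then the pointwise/BMO estimate \eqref{jdbncklsjhlweufywerifgoq} applied to triple intersections of trees, and the global argument is not a restricted-type scale summation but a weak-type bound at height $1$: one proves $\int_{E^c}T^t\lesssim 1$ for a suitable $t>1$ as in \eqref{nouaecuatiehgstsh}, combining the BMO bound on the forest operator with support control by the level sets of $M_{p_i}F_i$, summing over the three tree parameters $\vec m$, and finally integrating in $y$ and applying H\"older. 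Without an argument supplying this two-dimensional-to-fiberwise mechanism (or some substitute for it), your proposal does not close.
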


\subsubsection{The proof of Theorem \ref{newintusedthm}}

We now fix $\P_{hyper}$, and will not index any quantity ($T$, $\P_y$, etc) by it.

By further invoking interpolation and the dilation invariance of our operator, it suffices to prove that for each $2<p_i<\infty$ with $\frac{1}{p_1}+\frac{1}{p_2}+\frac{1}{p_3}=\frac{1}{p}$ and $\frac{p}{p_i}<\frac{1}{2}$, and each $\|F_i\|_{p_i}=1$ we  have
$$
|\{(x,y):T(F_1,F_2,F_3)(x,y)>1\}|\lesssim 1.
$$

The way we prove this is by constructing an exceptional set $E\subset \R^2$ with $E\lesssim 1$ such that for some appropriate $t>1$
\be{nouaecuatiehgstsh}
\int\int_{E^c}T(F_1,F_2,F_3)^t(x,y)dxdy\lesssim 1.
\end{equation}
The set $E$ will be constructed in a few stages. To prove \eqref{nouaecuatiehgstsh}, we may and will assume that all $(P,J)$ that contribute to $T$ in \eqref{dschlgueiwoy;qdmc';WOCKWEO} satisfy
\be{82397eefwjcy9h}
I_P\times J\nsubseteq E.
\end{equation}

\begin{definition}
Let $\xi_\T\in \R$ and let $R_\T$ be a $J$- dyadic square. A two dimensional i-tree  with top data $(\xi_\T,R_\T)$ is a collection $\T$ of hyper-multi-tiles with the property that $\xi_\T\in 2\omega_{P_i}$ and $I_P\times J\subseteq R_\T$ for each 
$(P,J):=I_P\times J\times\omega_{P_1}\times\omega_{P_2}\times\omega_{P_3}\in\T$.

A one dimensional  i-tree  with top data $(\xi_\T,I_\T)$ is a collection $\T$ of multi-tiles with the property that $\xi_\T\in 2\omega_{P_i}$ and $I_P\subseteq I_\T$ for each 
$P:=I_P\times\omega_{P_1}\times\omega_{P_2}\times\omega_{P_3}\in\T$
\end{definition}

Note that if $(\T,\xi_\T,R_\T)$ is a two dimensional tree, then for each $y$, its restriction to the fiber above $y$
$$\{P:(P,J_y)\in\T\}$$
is a one dimensional tree with top data $(\xi_\T,I_\T)$, that we denote by $\T_y$. We will refer to it as the tree induced by $\T$.

We further comment on our strategy to prove \eqref{nouaecuatiehgstsh}. 
For each $y$ denote
$$T_y(F_1,F_2,F_3)(x):=T(F_1,F_2,F_3)(x,y)=\sum_{P\in\P_y}\frac{1}{|I_P|^2}a_{P_1}(y)a_{P_2}(y)a_{P_3}(y)\chi_{I_P}(x).$$
Our plan is to get estimates for
$\int_{E_y^c}T_y^t(F_1,F_2,F_3)(x)dx$ outside the fibered exceptional set $E_y:=E\cap (\R\times\{y\})$. These estimates will then be integrated over $y$ to get \eqref{nouaecuatiehgstsh}.

To estimate $T_y$ we will split the collection $\P_y$ into trees, and will make sure that we gain some control over both $a_{P_i}(y)$ and also over the counting function of the tree tops. The basic estimate for an $i$-tree $\T_y\subset\P_y$ will be
\be{jdbncklsjhlweufywerifgoq}
\|\sum_{P\in\T_y}\frac{1}{|I_P|^{1/2}}\prod_{i=1}^{3}a_{P_i}(y)\frac{\chi_{I_P}(x)}{|I_P|}\|_{\BMO_x}\le 
\|(a_{P_j})_{P\in\T_y}\|_{\BMO}\|(a_{P_l})_{P\in\T_y}\|_{\BMO}\|(\frac{a_{P_i}}{|I_P|^{1/2}})_{P\in\T_y}\|_{\infty},
\end{equation}  
where $l\not=j\in\{1,2,3\}\setminus\{i\}$ and 
$$\|(a_{P_j})_{P\in\T_y}\|_{\BMO}:=\sup_{I\;dyadic}\left(\frac{1}{|I|}\sum_{P\in\T_y:I_P\subseteq I}a_{P_j}^2\right)^{1/2}\sim\|\sum_{P\in\T_y}(a_{P_j})^2\frac{\chi_{I_P}(x)}{|I_P|}\|_{\BMO_x}^{1/2}.$$
Note that $\|\cdot\|_\BMO$ majorizes  $\|\cdot\|_\infty$, so it will suffice to control the former, for each $i$.

The index $i=3$ plays a special role. To insure control over $\|(a_{P_3})_{P\in\T_y}\|_{\BMO}$, we will need to look at $\|(a_{P_3})_{P\in\T_y}\|_{\BMO}$ as being the restriction (to the $y$ fiber) of a similar two dimensional quantity (the 3-size). In short, to control $\|(a_{P_3})_{P\in\T_y}\|_{\BMO}$, instead of selecting one dimensional trees in $\P_y$, we will instead select two dimensional trees in $\P_{hyper}$, and then restrict them to $\P_y$. We explain below this procedure.

\begin{definition}
The $3$-size $\size_3(\P_{hyper}^*)$ of a finite collection $\P_{hyper}^*\subseteq \P_{hyper}$ of hyper-multi-tiles is defined as 
$$\sup_{\T\in\P_{hyper}^{*}}\left(\frac{1}{|R_\T|}\sum_{(P,J)\in\T}b_{P,J}^2\right)^{1/2},$$
where the supremum above is taken over all 1-trees and 2-trees.
\end{definition}

The following Bessel type inequality is standard (see also similar results in the previous section).

\begin{lemma}
\label{bessel2dimght86svbns}
Assume that for each $(P,J)\in\P_{hyper}$ the square $I_P\times J$ intersects the complement of the set\footnote{$M_p$ denotes the $L^p$ version of the Hardy-Littlewood maximal function} 
$$E_3:=\{(x,y):M_{p_3}F_3(x,y)\gtrsim 1\}.$$ We assume as before that  $\|F_3\|_{p_3}=1$. Then we can split 
$$\P_{hyper}:=\bigcup_{m\ge 0}\P_{hyper}^m$$
where
$$\size_3(\P_{hyper}^m)\lesssim 2^{-m}$$
and $\P_{hyper}^m$ is the union of a family $\F_m$ of pairwise disjoint $i$-trees $(i\in\{1,2,3\})$ satisfying
\be{hsdfkjhdsfdjhfkj}
\|\sum_{\T\in\F_m}\chi_{R_\T}\|_{BMO}\lesssim 2^{2m},
\end{equation}
\be{supportindivtreetwodim}
R_\T\subset \{(x,y):M_{p_3}F_3(x,y)\gtrsim 2^{-m}\},
\end{equation}
\be{supportindivtreetwodimdhdyeyehdn}
\sum_{\T\in \F_m}|R_\T|\lesssim 2^{(2+p_3)m},
\end{equation}
and, if the tree is 1-tree or 2-tree then 
\be{hdsakh38789473rjfhjkhdjvkdbvjskdhfu}
\|\sum_{(P,J)\in\T}b_{P,J}^2\frac{\chi_{I_P\times J}}{|I_P\times J|}\|_{\BMO}\lesssim 2^{-2m},
\end{equation}
while if the tree is 3-tree, then
$$\|({b_{P,J}^2})_{(P,J)\in\T}\|_{\infty}\lesssim 2^{-2m}.$$
\end{lemma}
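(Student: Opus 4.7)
\medskip
\noindent\textbf{Proof plan.} The plan is to run a greedy selection algorithm scale by scale, starting at $m=0$ and proceeding to larger $m$. At level $m$, let $\P^* := \P_{hyper} \setminus \bigcup_{m' < m} \P^{m'}_{hyper}$. I will first perform a \emph{singleton pass}: every $(P,J) \in \P^*$ with $b_{P,J}^2 > C\, 2^{-2m}$ is removed and placed in $\F_m$ as a singleton 3-tree $(\{(P,J)\}, c(\omega_{P_3}), I_P \times J)$. This immediately enforces the sup bound $\|(b_{P,J}^2)\|_\infty \lesssim 2^{-2m}$ for every 3-tree retained at level $m$. Next I perform a \emph{tree pass}: while some 1-tree or 2-tree $(\T, \xi_\T, R_\T) \subseteq \P^*$ satisfies $|R_\T|^{-1} \sum_{(P,J) \in \T} b_{P,J}^2 > 2^{-2m}$, I select such a tree with maximal $R_\T$ and, subject to this, extremal $\xi_\T$, add it to $\F_m$, and remove its convex hull (Definition \ref{convex-def}) from $\P^*$. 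The process halts in finitely many steps since $\P_{hyper}$ is finite; pairwise disjointness of $\F_m$ is enforced by the convex-hull removal, and the BMO bound \eqref{hdsakh38789473rjfhjkhdjvkdbvjskdhfu} for the retained 1- and 2-trees follows from the stopping criterion applied to every sub-tree of a selected tree.

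The key analytic input is a Bessel-type orthogonality for the wave-packet family $\{\varphi_{P_3}\psi_J\}_{(P,J) \in \T}$ inside a single $i$-tree, which is the one-and-a-half-dimensional analogue of Proposition \ref{nmprop} and is proved by the same $TT^*$ argument adapted to hyper-multi-tiles. Applied inside a fixed dyadic square $R$ to the sub-forest of $\F_m$ consisting of trees with $R_\T \subseteq R$, and localized via the weight $\tilde \chi_R^{10}$, this yields
\[
\sum_{\T \in \F_m : R_\T \subseteq R} |R_\T| \cdot 2^{-2m} \;\lesssim\; \sum_{\T : R_\T \subseteq R} \sum_{(P,J) \in \T} b_{P,J}^2 \;\lesssim\; \|F_3 \tilde\chi_R^{10}\|_2^2 \;\lesssim\; |R|,
\]
which is the counting bound \eqref{hsdfkjhdsfdjhfkj} (any logarithmic loss in $\|N_{\F_m}\|_\infty$ is absorbed using Lemma \ref{cor:7}, as in \cite{DTT}). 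The same Bessel applied tree-by-tree, combined with the stopping threshold $> 2^{-m}$, forces $M_{p_3}F_3 \gtrsim 2^{-m}$ on a constant fraction of each $R_\T$, giving \eqref{supportindivtreetwodim} after a harmless universal dilation of $R_\T$.

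For the area bound \eqref{supportindivtreetwodimdhdyeyehdn}, the weak-$L^{p_3}$ estimate gives $|\{M_{p_3}F_3 > 2^{-m}\}| \lesssim 2^{p_3 m}$; covering this superlevel set by a Whitney union of dyadic cubes $\{R_k\}$ with $\sum_k |R_k| \lesssim 2^{p_3 m}$ such that every selected $R_\T$ sits inside some $R_k$, I chain the BMO bound to obtain
\[
\sum_{\T \in \F_m} |R_\T| \;\le\; \sum_k \sum_{\T : R_\T \subseteq R_k} |R_\T| \;\lesssim\; 2^{2m} \sum_k |R_k| \;\lesssim\; 2^{(p_3+2)m}.
\]
The hard part will be the coordination of the sup-type and BMO-type components of $\size_3$: without the singleton pre-sweep, a 3-tree of small average size could still harbor an individual $b_{P,J}$ much larger than $2^{-m}$. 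The two-pass structure precisely decouples these two contributions and lets both \eqref{hdsakh38789473rjfhjkhdjvkdbvjskdhfu} and the sup bound hold for the respective tree types in $\F_m$.
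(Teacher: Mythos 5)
Your overall route -- level-by-level greedy selection of heavy trees, a Bessel inequality across the selected (strongly disjoint) trees localized by $\tilde\chi_R^{10}$ to get the BMO counting bound \eqref{hsdfkjhdsfdjhfkj}, the maximal-function characterization of heaviness for \eqref{supportindivtreetwodim}, and the weak-type bound plus a Whitney-type covering for \eqref{supportindivtreetwodimdhdyeyehdn} -- is exactly the ``standard'' argument the paper has in mind (the paper does not prove this lemma; it refers to the selection arguments of the preceding section and \cite{DTT}). However, your singleton pass, as designed, has a genuine flaw. You remove every $(P,J)$ with $b_{P,J}^2>C2^{-2m}$, with $b_{P,J}$ \emph{unnormalized}. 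Since $\varphi_{P_3}$ and $\psi_J$ are $L^2$-normalized, one only has $b_{P,J}\lesssim |I_P\times J|^{1/2}\inf_{I_P\times J}M_{p_3}F_3$, so a very large tile on which $F_3$ has tiny local averages (which is allowed: the hypothesis only says $I_P\times J$ meets the complement of $E_3$) can satisfy $b_{P,J}^2\gg 2^{-2m}$ already at $m=0$. Such a tile gets swept into $\F_m$ as a singleton $3$-tree whose top $R_\T=I_P\times J$ is \emph{not} contained in $\{M_{p_3}F_3\gtrsim 2^{-m}\}$, breaking \eqref{supportindivtreetwodim}; moreover your displayed chain needs $|R_\T|2^{-2m}\lesssim b_{P,J}^2$, which fails for these tiles when $|I_P\times J|>1$, so \eqref{hsdfkjhdsfdjhfkj} and \eqref{supportindivtreetwodimdhdyeyehdn} are not controlled for them either.

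The root of the problem is that the unnormalized sup bound in the statement is evidently a misprint: the quantity used later (see \eqref{saghhgedyoweibnsmaxcbdfhjkdshjsdhfhkdfhj}, where $a_{P_3}^2(y)/|I_P|=b_{P,J}^2/|I_P\times J|$) is the normalized one, and only the normalized quantity is $\lesssim 1$ at $m=0$ under the hypothesis on $E_3$. The repair is immediate and simplifies your algorithm: drop the separate singleton pass altogether. Every singleton $\{(P,J)\}$ is itself a $1$-tree (choose any $\xi\in 2\omega_{P_1}$ as top frequency), so the size stopping rule already forces $b_{P,J}^2/|I_P\times J|\lesssim 2^{-2m}$ for everything at level $m$; both \eqref{hdsakh38789473rjfhjkhdjvkdbvjskdhfu} and the (normalized) $3$-tree sup bound are then automatic consequences of $\size_3(\P_{hyper}^m)\lesssim 2^{-m}$, and the only real work is the counting estimates, for which your tree pass suffices. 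Two smaller points: the orthogonality you need is not ``inside a single $i$-tree'' but across the selected trees -- this is where the maximal-top/extremal-$\xi_\T$ greedy order (strong disjointness, plus the fact that $1$- and $2$-trees are automatically lacunary in the third component) enters; and Lemma \ref{cor:7} controls the $L^1$ norm of the counting function, not its BMO norm, so for \eqref{hsdfkjhdsfdjhfkj} you should run the localized Bessel (in the spirit of Proposition \ref{nmproplocal}); if a logarithmic loss survives there, you only get $2^{(2+\epsilon)m}$, which is in fact all that the subsequent application \eqref{dhssuayweoiyuicdhsjkalncmkn} requires.
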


We will state a few consequences of the above. For a.e. $y$\footnote{More precisely, for $y$ not a dyadic point} we let $\T_y$ be the one dimensional tree induced by $\T$, and denote by $\F_{m,y}^{3}$ the collection of these trees. 

A standard application of John-Nirenberg's inequality, together with \eqref{hsdfkjhdsfdjhfkj} and \eqref{supportindivtreetwodim} implies that there is $E_m^{**}$ such that $|E_m^{**}|\lesssim 2^{-Mm}$ and 
\be{dhssuayweoiyuicdhsjkalncmkn}
\|\sum_{\T\in\F_m\atop{R_\T\nsubseteq E_m^{**} }}\chi_{R_\T}\|_{\infty}\lesssim 2^{(2+\epsilon)m}.
\end{equation}
Here and in the following $\epsilon$ can be thought of as being as small as we want, while $M$ as large as we want. We put $E_3^{**}:=\bigcup_m E_m^{**}$ in the exceptional set $E$. From this, \eqref{82397eefwjcy9h} and \eqref{dhssuayweoiyuicdhsjkalncmkn} we get
\be{dhssuayweoiyuicdhsjkalncmkn1jdjk}
\|\sum_{\T_y\in\F_{m,y}^3}\chi_{I_{\T_y}}\|_{\infty}\lesssim 2^{(2+\epsilon)m}.
\end{equation}

Another  application of John-Nirenberg's inequality combined with \eqref{hdsakh38789473rjfhjkhdjvkdbvjskdhfu} implies that if $\T\in\F_m$ is 1-tree or 2-tree then
$$\|\sum_{(P,J)\in\T\atop{I_P\times J\nsubseteq E_{3,\T}}}b_{P,J}^2\frac{\chi_{I_P\times J}}{|I_P\times J|}\|_{L^\infty}\lesssim 2^{-(2-\epsilon)m},$$
for some $E_{3,\T}\subset R_\T$ with $|E_{3,\T}|\lesssim 2^{-Mm}|R_\T|$.

An immediate consequence is that
\be{saghhgedyoweibnsmaxcb}
\|\sum_{(P,J)\in\T\atop{I_P\times J\nsubseteq E_{3,\T}\atop_{y\in J}}}\frac{b_{P,J}^2}{|I_P|}\chi_{I_P}\|_{L^\infty(\R)}=\|\sum_{P\in\T_y\atop{I_P\times J\nsubseteq E_{3,\T}}}\frac{a_{P_3}^2(y)}{|I_P|}\chi_{I_P}\|_{L^\infty(\R)}\lesssim 2^{-(2-\epsilon)m}.
\end{equation}

We put both $E_3$ and $E_3^{*}:=\bigcup_{m}\bigcup_{\T\in \F_m}E_{3,\T}$ in the exceptional set $E$. By \eqref{supportindivtreetwodimdhdyeyehdn}, these have $O(1)$ measure. From this, \eqref{82397eefwjcy9h} and \eqref{saghhgedyoweibnsmaxcb} we have
for each $\T_y\in \F_{m,y}^3$
\be{saghhgedyoweibnsmaxcbdfhjkdshj}
\|\sum_{P\in\T_y}\frac{a_{P_3}^2(y)}{|I_P|}\chi_{I_P}\|_{L^\infty(\R)}\lesssim 2^{-(2-\epsilon)m}.
\end{equation}

A final consequence of Lemma \ref{bessel2dimght86svbns} that we mention is that if $\T\in\F_m$ is a 3-tree, then 
\be{saghhgedyoweibnsmaxcbdfhjkdshjsdhfhkdfhj}
\|(\frac{a_{P_3}^2(y)}{|I_P|})_{P\in\T_y}\|_{\infty}\lesssim 2^{-2m}.
\end{equation}

We will continue to think about $y$ as being fixed.
We have so far learned how to estimate the third component $a_{P_3}(y)$, see \eqref{dhssuayweoiyuicdhsjkalncmkn1jdjk}, \eqref{saghhgedyoweibnsmaxcbdfhjkdshj} and  \eqref{saghhgedyoweibnsmaxcbdfhjkdshjsdhfhkdfhj}.

The control of the first two components $a_{P_i}(y)$, $i\in\{1,2\}$ is completely standard. We will have a purely one dimensional selection algorithm for trees, in particular we will not use two dimensional trees.

\begin{lemma}
Let $i\in\{1,2\}$. Assume that for each $(P,J)\in\P_{hyper}$, $I_P\times J$ intersects the complement of the set
$$E_i:=\{(x,y):M_{p_i}F_i(x,y)\gtrsim 1\}.$$
We also assume as before that $\|F_i\|_{p_i}=1$. Then we can split
$$\P_y=\bigcup_{m\ge 0}\P_{y}^m$$
in such a way that $\P_{y}^m$ is the union of a family $\F_{m,y}^i$ of pairwise disjoint trees satisfying
\be{BMOesthdhd7we7wmd}
\|\sum_{\T_y\in \F_{m,y}^i}\chi_{I_\T}\|_{\BMO_x}\lesssim 2^{2m},
\end{equation}
\be{supportindivtreeonedim}
I_\T\subset \{x:M_{p_3,x}F_3(x,y)\gtrsim 2^{-m}\},
\end{equation}
\be{hhsd6629snmv.jsdt}
\sum_{\T_y\in \F_{m,y}^i}|I_\T|\lesssim 2^{(2+p_3)m}\int M_{p_i,x}^{p_i}F_i(x,y)dx,
\end{equation}
and, if the tree is $j$-tree with $j\not=i$  then 
\be{askhkdluyeuiyoefiuehfuihejqhcjdnknldkjndkl}
\|\sum_{P\in\T_y}a_{P_i,y}^2\frac{\chi_{I_P}}{|I_P|}\|_{\BMO_x}\lesssim 2^{-2m},
\end{equation}
while if the tree is i-tree, then
\be{hsdjkhdjfhuiewyrfuierythcvjdkhjshdjkhaqrwtur}
\|(\frac{a_{P_i,y}^2}{|I_P|})_{P\in\T_y}\|_{\infty}\lesssim 2^{-2m}.
\end{equation}
\end{lemma}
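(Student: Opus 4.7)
The approach is to adapt the one-dimensional tree selection machinery of Section \ref{sec:345} to the fibered setting above a fixed $y$. For $i \in \{1,2\}$ and a finite $\P_y^{*} \subseteq \P_y$, define $\size_i^{l}(\P_y^{*})$ as the supremum over all $j$-trees $(\T_y,\xi_\T,I_\T) \subseteq \P_y^{*}$ with $j \neq i$ of
$$\|\sum_{P \in \T_y} a_{P_i}^2(y)\frac{\chi_{I_P}}{|I_P|}\|_{\BMO_x}^{1/2},$$
and $\size_i^{o}(\P_y^{*})$ as the supremum over all $i$-trees $(\T_y,\xi_\T,I_\T) \subseteq \P_y^{*}$ of $\sup_{P \in \T_y} a_{P_i}(y)/|I_P|^{1/2}$. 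Let $\size_i^{*}(\P_y^{*}) := \max\{\size_i^{l}(\P_y^{*}), \size_i^{o}(\P_y^{*})\}$. These are the one-dimensional, fixed-$y$ analogs of the maximal sizes used in Section \ref{sec:345}, and the target bounds \eqref{askhkdluyeuiyoefiuehfuihejqhcjdnknldkjndkl} and \eqref{hsdjkhdjfhuiewyrfuierythcvjdkhjshdjkhaqrwtur} are exactly the statements that, once the stage-$m$ trees have been removed, $\size_i^{*}$ drops below $2^{-m}$.

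The selection proceeds by running the greedy algorithm of Section \ref{sec:345} separately for $j$-trees ($j \neq i$, the ``lacunary'' case in dimension one) and for $i$-trees (the ``overlapping'' case). At stage $m$, one extracts all trees whose associated size-contribution exceeds $2^{-m}$. To ensure pairwise disjointness inside each $\F_{m,y}^i$ and strong disjointness between stages, each extracted tree is enlarged in the manner of the collection $\U((\T,\xi_\T,R_\T),n)$ from Section \ref{sec:345} and removed wholesale. The support condition \eqref{supportindivtreeonedim} (read with $F_i, p_i$ in place of $F_3, p_3$, which appears to be a typographical slip) is enforced by noting that each selected tree admits a witness tile $P^{*}$ with $|\langle F_i(\cdot,y),\varphi_{P^{*}_i}\rangle| \gtrsim 2^{-m}|I_{P^{*}}|^{1/2}$; combining the rapid decay of $\varphi$ with the standard pointwise estimate
$$|\langle F_i(\cdot,y),\varphi_{P^{*}_i}\rangle| \lesssim |I_{P^{*}}|^{1/2}\inf_{x \in I_{P^{*}}} M_{p_i,x}F_i(x,y)$$
forces a definite portion of $I_{P^{*}}$, and hence of $I_\T$, to lie in $\{x : M_{p_i,x}F_i(x,y) \gtrsim 2^{-m}\}$.

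The counting function and $\BMO$ bounds follow from the one-dimensional wave-packet Bessel inequality of the type of Proposition \ref{nmprop}, applied to the induced one-dimensional forest of selected trees and combined with Lemma \ref{cor:7} to absorb the $\log^{2}(2+\|N_{\F}\|_\infty)$ loss. This yields
$$\sum_{\T_y \in \F_{m,y}^i}|I_{\T_y}|\cdot 2^{-2m} \lesssim \|F_i(\cdot,y)\chi_{\bigcup_{\T} I_\T}\|_2^2,$$
and, using the support condition together with H\"older's inequality on the super-level set, one bounds the right-hand side by
$$|\bigcup_{\T} I_\T|^{1-2/p_i}\|M_{p_i,x}F_i(\cdot,y)\|_{p_i}^2 \lesssim 2^{m p_3}\int M_{p_i,x}^{p_i}F_i(x,y)\,dx,$$
producing the exponent $2+p_3$ in \eqref{hhsd6629snmv.jsdt}. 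The $\BMO$ bound \eqref{BMOesthdhd7we7wmd} is the same argument localized to an arbitrary dyadic interval $I$, using Proposition \ref{nmproplocal} in place of Proposition \ref{nmprop}.

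The main technical obstacle lies in the $i$-tree (overlapping) case: the relevant tile-like boxes $I_\T \times \omega_{\T}$ have frequency parts that are not elements of $\D_0$, and wave packets $\varphi_{P_i}$ with $P_i$ in an $i$-tree are not orthogonal among themselves. Exactly as in Section \ref{sec:345}, this is handled by the dyadic $\omega_{\T,s,v}^{\pm}$ decomposition together with a minimax selection of $v_\T \in 10\omega_\T$, producing a summable $2^{-s}$ loss in the Bessel inequality. Apart from this bookkeeping, the argument is a fibered restatement of standard one-dimensional technology from \cite{MTT8} and \cite{DTT}, so the details are omitted.
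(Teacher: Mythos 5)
The paper itself gives no proof of this lemma (it is dismissed as a ``completely standard'' one-dimensional selection), so your sketch must be judged against the standard argument it alludes to; measured that way, your overall plan (fibered sizes, greedy selection, the Bessel inequality of Proposition \ref{nmprop}/\ref{nmproplocal} plus Lemma \ref{cor:7}) is the right general spirit, but there is a genuine gap in how you obtain \eqref{supportindivtreeonedim}. A witness tile $P^{*}\in\T_y$ with $a_{P^{*}_i}(y)\gtrsim 2^{-m}|I_{P^{*}}|^{1/2}$ only forces $M_{p_i,x}F_i(\cdot,y)\gtrsim 2^{-m}$ on $I_{P^{*}}$ and on points at comparable distance, i.e.\ on a sub-interval of $I_\T$ at the witness scale; it does not give the containment of the whole top $I_\T$, and indeed you only claim ``a definite portion of $I_\T$''. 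But the full containment is exactly what is used later, when the support of $T_{\F_{\vec m,y}}$ (which is $\bigcup_\T I_\T$) is intersected with the sets $\{x: M_{p_i}F_i(x,y)\gtrsim 2^{-m_i}\}$ to bound its measure. The standard remedy, which your size-first selection does not provide, is to organize the decomposition by the maximal-function stopping time first: put into $\P_y^m$ those tiles with $I_P\subseteq\{M_{p_i,x}F_i(\cdot,y)\gtrsim 2^{-m}\}$ but not in the previous level, and take tree tops inside the maximal dyadic components of this open set, so that \eqref{supportindivtreeonedim} and the per-tile bound \eqref{hsdjkhdjfhuiewyrfuierythcvjdkhjshdjkhaqrwtur} hold by construction; the size-based sub-selection and Bessel inequality then only have to deliver \eqref{askhkdluyeuiyoefiuehfuihejqhcjdnknldkjndkl}--\eqref{hhsd6629snmv.jsdt}. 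Relatedly, your route to \eqref{BMOesthdhd7we7wmd} via localized Bessel produces a factor of the local fiber $L^2$ mass $\|F_i(\cdot,y)\tilde\chi_I\|_2^2/|I|$, which is not $O(1)$ on a bad fiber and is not controlled by the two-dimensional hypothesis $I_P\times J\not\subseteq E_i$; removing that factor again requires the per-tile coefficient cap inherited from the previous stopping level, i.e.\ the mass-first organization.

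Secondly, the step you single out as ``the main technical obstacle'' --- transplanting the $\omega_{\T,s,v}^{\pm}$ decomposition and the minimax choice of $v_\T$ from Section \ref{sec:345} to handle $i$-trees --- is misplaced in this setting. That machinery exists there because the overlapping size is an $L^2$ quantity taken over a class of multipliers vanishing at a point of $10\omega_\T$; here the $i$-tree size is a supremum of single wave-packet coefficients $a_{P_i}(y)/|I_P|^{1/2}$, there is no multiplier class to decompose, and in fact no separate overlapping selection is needed at all: since a singleton $\{P\}$ can be given top data making it a $j$-tree for any $j\neq i$, the sup-type quantity is dominated by the $\BMO$-type quantity, as the paper itself observes when it notes that $\|\cdot\|_{\BMO}$ majorizes $\|\cdot\|_{\infty}$. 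Finally, your derivation of \eqref{hhsd6629snmv.jsdt} actually yields the exponent $2^{(2+p_i)m}$ (with the factor $\int M_{p_i,x}^{p_i}F_i(x,y)\,dx$), which is fine if one reads the lemma's $p_3$, like its $F_3$ in \eqref{supportindivtreeonedim}, as a slip for $p_i$, but as written the Hölder step does not produce the claimed $2^{(2+p_3)m}$.
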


We now put all the pieces together. Put $E_1$ and $E_2$ in $E$.
 Let $\vec{m}:=(m_1,m_2,m_3)$ and assume that each $m_i\ge 0$. Denote by $|\vec{m}|:=m_1+m_2+m_3$. Let $\F_{\vec{m},y}$ the collection of trees obtained by intersecting triples of trees, one from each $\F_{m_i,y}^i$. For such a tree $\T_y$ we get by using \eqref{jdbncklsjhlweufywerifgoq}, \eqref{saghhgedyoweibnsmaxcbdfhjkdshj}, \eqref{saghhgedyoweibnsmaxcbdfhjkdshjsdhfhkdfhj}, \eqref{askhkdluyeuiyoefiuehfuihejqhcjdnknldkjndkl} and \eqref{hsdjkhdjfhuiewyrfuierythcvjdkhjshdjkhaqrwtur}, and by invoking John-Nirenberg again,
\be{saduyedbnjv}
\|\sum_{P\in\T_y}\frac{1}{|I_P|^{1/2}}\prod_{i=1}^{3}a_{P_i}(y)\frac{\chi_{I_P}(x)}{|I_P|}\|_{\infty}\lesssim 2^{-|\vec{m}|(1-\epsilon)},
\end{equation}
To get the above, we actually assume that for each $\T_y$ we have eliminated an exceptional set $E_{\T_y}$ of measure $O(2^{-M|\vec{m}|})|I_{\T_y}|$. More precisely, we add to $E$ the two dimensional exceptional set containing all ${E_{\T_y}}\times \{y\}$, for all $y$. It is easy to see, due to \eqref{hhsd6629snmv.jsdt}, that the union of these sets has measure $O(1)$.

We can now evaluate the BMO norm of the operator associated with the forest $\F_{\vec{m},y}$, defined by 
$$T_{\F_{\vec{m},y}}(x):=\sum_{\T_y\in \F_{\vec{m},y}}\sum_{P\in\T_y}\frac{1}{|I_P|^{1/2}}\prod_{i=1}^{3}a_{P_i}(y)\frac{\chi_{I_P}(x)}{|I_P|}.$$
We have
\begin{align}
\nonumber
\|T_{\F_{\vec{m},y}}\|_{\BMO}&\lesssim 2^{-|\vec{m}|(1-\epsilon)}\|\sum_{\T_y\in \F_{\vec{m},y}}\chi_{I_{\T_y}}\|_{\BMO_x}\\\nonumber&\le2^{-|\vec{m}|(1-\epsilon)}\min_i\|\sum_{\T_y\in \F_{m_i,y}^i}\chi_{I_{\T_y}}\|_{\BMO_x}\\\label{dshcjkhsdkfjk3534774897}&\lesssim 2^{-|\vec{m}|(1-2\epsilon)}2^{\sum_i\frac{2pm_i}{p_i}}
\end{align}
where the last inequality follows from \eqref{dhssuayweoiyuicdhsjkalncmkn1jdjk} and \eqref{BMOesthdhd7we7wmd}.

We note that due to \eqref{supportindivtreetwodim} and \eqref{supportindivtreeonedim}, $T_{\F_{\vec{m},y}}$ is supported in each of the sets
$$\{x:M_{p_i}F_i(x,y)\gtrsim 2^{-m_i}\},$$
It follows that  the size of the support of $T_{\F_{\vec{m},y}}$ is 
$$\lesssim \prod_{i=1}^{3}(2^{p_im_i}\int [M_{p_i}F_i(x,y)]^{p_i}dx)^{\frac{p}{p_i}}.$$
Finally, by invoking this, \eqref{dshcjkhsdkfjk3534774897} and the initial assumption that $\frac{p}{p_i}<2$ we get for sufficiently large $t$ 

\begin{align*}
\|T_{\F_{\vec{m},y}}\|_{t}&\lesssim \|T_{\F_{\vec{m},y}}\|_{\BMO}\prod_{i=1}^{3}(2^{p_im_i}\int [M_{p_i}F_i(x,y)]^{p_i}dx)^{\frac{p}{tp_i}}\\&\lesssim 
2^{\sum_{i}m_i(\frac{p}{t}+\frac{2p}{p_i}-1+\epsilon)}\left(\prod_{i=1}^{3}(\int [M_{p_i}F_i(x,y)]^{p_i}dx)^{\frac{p}{p_i}}\right)^{1/t}\\&\lesssim 2^{-\epsilon|\vec{m}|}\prod_{i=1}^{3}\|M_{p_i}F_i(x,y)\|_{L^{p_i}_x}^{p/t}
\end{align*}

This estimate is summable over all $\vec{m}$ with positive entries. Using this and the fact that 
$$\P_y=\bigcup_{\vec{m}}\bigcup_{\T_y\in \F_{\vec{m},y}}\bigcup_{P\in\T_y}P,$$
we get
$$\int_{E^c} T_y^t(F_1,F_2,F_3)(x)dx\lesssim \prod_{i=1}^{3}\|M_{p_i}F_i(x,y)\|_{L^{p_i}_x}^{p}.$$

Integration in $x$ and H\"older's inequality gives \eqref{nouaecuatiehgstsh}.

%%%%%%%%%%%%%%%%%%%%%
\section{The Case 2 and 3}
\label{sec:semidegworse}
We analyze the case
$B=\left[\begin{array}{cc} 0 & 1 \\ 0 & 0\end{array}\right]$, that is
$$\Lambda(F_1,F_2,F_3)=\int_{R^4}F_1(x+t,y+s)F_2(x+s,y)F_3(x,y)K(t,s)dxdydtds.$$
We give an outline of the proof of bounds in the same range as that in Theorem 
\ref{Mainthmforsemidegform}. We first do a cone decomposition as in \eqref{conedecompuuh}, and analyze expressions like

\be{conedecompuuhmoresing}
\sum_{k\in\Z}\int_{R^4} F_1(x+t,y+s)F_2(x+s,y)F_3(x,y)\Psi_k(t)\Phi_k(s)dxdydtds.\end{equation}
As before, we distinguish two cases.

\subsection{The cone $\int\Phi=0$}\ \
\label{sub:1moresingular}

By using standard reductions, in order to get bounds for \eqref{conedecompuuhmoresing} it suffices to prove the boundedness of the model sum
\begin{equation}
\label{model005t5iouivmdfkvnegqwyetefbj}
\int_{\R^2}\sum_{Q=\omega_{1}\times\omega_{2}\times {\omega}_{3}\in {\bf Q}}\prod_{i=1}^3\pi_{\omega_i}^{(i)}F_i(x,y)dxdy
\end{equation}
where for $i\in\{2,3\}$, $\pi_{\omega}^{(i)}$ denotes some projection operator (acting on the $x$ variable) associated with $m_{\omega}$ adapted to $\omega$,
while $\pi_{\omega}^{(1)}$ is the tensor product of a projection as above in the second coordinate and a projection as above on $[-|\omega|,|\omega|]$, in the first coordinate. The relationship in this case between the $\omega_i$ of a given scale $2^k$ is represented by the relations $-\omega_2=\omega_3=\omega_1+C_02^k$. While the fact that $-\omega_2=\omega_3$ is of no particular importance\footnote{$\omega_2=\omega_3$ would have made no difference}, the only genuine source of orthogonality here comes from the fact that $\omega_3=\omega_1+C_02^k$.

We will have two types of trees: the 23-trees, those for which\footnote{we follow here the same notation as in Definition \ref{deftreesg66tgsjhu}} $\bar{\omega}_{\xi_\T,R_\T}\subseteq \bar{\omega}_3$, and 1-trees, those for which $\bar{\omega}_{\xi_\T,R_\T}\subseteq \bar{\omega}_1$. As before, the tree will be called $i$- overlapping if $\xi_\T\in 2\omega_i$  and $i$-lacunary otherwise. The observation that any tree must have at least one lacunary index is exploited to prove the paraproduct estimate (the analog of Proposition \ref{prop:paraproduct11286}). We then work with one and a half dimensional phase-space projections on the second and third function, and with two dimensional projections on the first function. The selection algorithms and the Bessel type inequalities needed to control forests is essentially the same as in Section \ref{sec:34} and Section \ref{sec:345}. As a general observation, we note that, as in the previous case,  neither the fact that $-\omega_2=\omega_3$ nor the separation condition $\omega_3=\omega_1+C_02^k$ play any significant role in the selection algorithm and in establishing Bessel's inequality\footnote{An alternative condition like, say,  $\omega_1=\omega_2=\omega_3$ would have made no difference, in that part of the argument}.

We omit the other details, and invite the interested reader to take this as an exercise, after reading Section \ref{sjckldjksecsecsec}.

\subsection{The cone $\int\Psi=0$}\ \
\label{sub:1moresingularhdfkjdshfjkhsdjfh}

% This case is even more similar in spirit to that of the operator discussed in Section \ref{sub:1} (which, not to be confused, is $\int\Phi=0$.)

The same standard reductions make it sufficient  to prove the boundedness of the model sum
\begin{equation}
\label{model005sadhhwuqyeuwyddk}
\int_{\R^2}\sum_{Q=\omega_{1}\times\omega_{2}\times {\omega}_{3}\in {\bf Q}}\prod_{i=1}^3\pi_{\omega_i}^{(i)}F_i(x,y)dxdy
\end{equation}
where for $i\in\{2,3\}$, $\pi_{\omega}^{(i)}$ denotes some projection operator (acting on the $x$ variable)  associated with $m_{\omega}$ adapted to $\omega$,
while $\pi_{\omega}^{(1)}$ is the tensor product of a projection as above in the second coordinate and a projection as above on $[|\omega|,2|\omega|]$, in the first coordinate. The relationship in this case between the $\omega_i$ of a given scale $2^k$ is represented by the relations $-\omega_1=\omega_2=-\omega_3+C_02^k$. We will now have 12-trees and 3-trees, and again, there should be at least one lacunary index. Moreover, the fact that $\pi_{\omega}^{(1)}$ projects to  $[|\omega|,2|\omega|]$ in the first coordinate is yet another source of orthogonality. It is easy to see that for either type of tree, the paraproduct estimate follows directly by H\"older's inequality (apply square functions on two of the components, one of which is always $F_1$, and a maximal function on the remaining component). We again omit the details.

%%%%%%%%%%%%%%%%%%%%%%%%
%%%%%%%%%%%%%%%%%%%%%%%%%
\section{The non-degenerate case}
\label{sec:nondeg}
In this section we briefly show how to analyze the case $\{0,1\}\cap Spec(B)=\emptyset.$

For a square $Q$, denote with $c(Q):=(c_1(Q),c_2(Q))$ its center.
Let $\varphi$ be a function whose Fourier transform is adapted to $[-1/2,1/2]\times [-1/2,1/2]$. With each dyadic box $P:=R_P\times\omega_{P_1}\times\omega_{P_2}\times\omega_{P_3}$ with $R_P,\omega_{P_i}\subset\R^2$, and such that $R_P$ has area  $|R_P|$ equal to $|\omega_{P_1}|^{-1}=|\omega_{P_2}|^{-1}=|\omega_{P_3}|^{-1}$, we associate three wave-packets  $\varphi_{P_i}$, localized (in time-frequency) in the tiles $R_P\times \omega_{P_i}$
$$\varphi_{P_i}(x,y)=\frac{1}{|R_P|^{1/2}}\varphi\left(\frac{x-c_1(R_P)}{|R_P|^{1/2}},\frac{y-c_2(R_P)}{|R_P|^{1/2}}\right)e^{i(c_1(\omega_{P_i})x+c_2(\omega_{P_i})y)}.$$

We perform a wave-packet decomposition of each $F_i$ (as in Section \ref{discrtsyshsbsgdtd6}), and a cone decomposition of $K$, and then input these in $\Lambda$.
Elementary computations show that, due to the fact that $\{0,1\}\cap Spec(B)=\emptyset,$ all cones are equivalent. Here is what we mean. These computations show that only a few types of $P$ will produce a non-zero contribution to $\Lambda$. A somewhat simplified way of writing the restrictions on a contributing $P$ with scale $|R_P|=2^{-2k}$ is expressed by the following system of equations:

$$\begin{cases} c_1(\omega_{P_1})+c_1(\omega_{P_2})+c_1(\omega_{P_3})=0 & \\ c_2(\omega_{P_1})+c_2(\omega_{P_2})+c_2(\omega_{P_3})=0 & \\ c_1(\omega_{P_1})+b_{11}c_1(\omega_{P_2})+b_{21}c_2(\omega_{P_2})= C_1 2^{k} & \\ c_2(\omega_{P_1})+b_{12}c_1(\omega_{P_2})+b_{22}c_2(\omega_{P_2})= C_2 2^{k} & \end{cases}$$
with $\max\{C_1,C_2\}>100$. Here $b_{ij}$ are the entries of $B$. It is easy to see that the condition $\{0,1\}\cap Spec(B)=\emptyset$ implies that the family of contributing triples $(\omega_{P_1},\omega_{P_2},\omega_{P_3})$ is one-parameter, in that if we specify $c(\omega_{P_i})$ for some $i$ and specify the scale, then the above system has a unique solution. Moreover, the condition $\max\{C_1,C_2\}>100$ implies that 
$i$- trees will always be $j$- lacunary, for each $j\not=i$. The approach then follows closely the lines of the proof of the boundedness of the one dimensional Bilinear Hilbert Transform, with no significant modifications (see \cite{LT1}, \cite{LT2},\cite{thielelectures} for details). The outcome is bounds for the operator in the same range as that in Theorem \ref{dsjfkhwey237r82347jsdkvnfdbvgu02i0-234i9rejgijrtio}.

\section{Applications to Ergodic theory}
A famous open problem in Ergodic theory concerns the pointwise convergence of the bilinear averages for commuting transformations:
\begin{question}
Let $(X,\Sigma,m)$ be a probability space and let $T,S:X\to X$ be two commuting measurable $m$-preserving point transformations on $X$. Then for each $f,g\in L^{\infty}(X)$, the following averages converge for almost every $x\in X$
\be{hardaverages}
\frac1N\sum_{n=1}^{N}f(T^nx)g(S^{n}x)
\end{equation}
\end{question}

This difficult question is known to have a positive answer when $S$ is a power of $T$. This was proved by Bourgain in  \cite{Bo10}, and then reproved\footnote{In \cite{D}, a unified approach is used to prove convergence of both averages and their singular series counterpart} by the first author in \cite{D}. The techniques we develop in this paper do not seem sufficient by themselves to address the question above in full generality, but we believe they represent an important step towards its resolution. Another step in this program would be to prove bounds for the operator described in Case 6, and ultimately for the bilinear averages
$$\int F_1(x+t,y)F_2(x,y+t)\frac{dt}{t}.$$

We mention however the following related consequences, that come as a by-product of our analysis.
\begin{theorem}
\label{applictoergth}
Under the hypothesis above, the following averages converge for almost every $x\in X$
\be{averages1}
\frac1{N^2}\sum_{n=1}^{N}\sum_{m=1}^{N}f(T^nS^mx)g(T^{-n}S^mx)
\end{equation} 
\be{averages2}
\frac1{N^2}\sum_{n=1}^{N}\sum_{m=1}^{N}f(T^nS^mx)g(T^{m}x)
\end{equation}
\end{theorem}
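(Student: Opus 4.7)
The plan is to deduce both a.e.\ convergence statements from the Banach principle, combining maximal bilinear inequalities (obtained from the estimates of Sections \ref{sjckldjksecsecsec} and \ref{sec:semidegworse}) with a.e.\ convergence on a dense subclass of $L^\infty(X)\times L^\infty(X)$.

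First I would recognize both families of averages as bilinear $\Z^2$-convolutions via the action $(n,m)\mapsto T^nS^m$. For \eqref{averages1} the arguments $T^nS^mx,\,T^{-n}S^mx$ correspond to $A_1=I$ and $A_2=\left[\begin{array}{cc}-1&0\\0&1\end{array}\right]$, so that $B=A_2A_1^{-1}$ has $\mathrm{Spec}(B)=\{-1,1\}$ and we are in Case~4 with $\lambda=-1$, covered by Theorem \ref{Mainthmforsemidegform}. For \eqref{averages2} the arguments $T^nS^mx,\,T^mx$ give $A_2=\left[\begin{array}{cc}0&1\\0&0\end{array}\right]$, i.e.\ the second (nilpotent) sub-case of Case~2, covered by the analysis of Section \ref{sec:semidegworse}. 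Next I would upgrade these bilinear Hilbert-transform bounds to maximal bounds
\[
\Bigl\|\sup_{N\ge 1}|A_N(F_1,F_2)|\Bigr\|_{p_3'}\lesssim \|F_1\|_{p_1}\|F_2\|_{p_2},\qquad 2<p_i<\infty,\ \sum_i p_i^{-1}=1,
\]
where
\[
A_N(F_1,F_2)(x,y):=\frac{1}{N^2}\iint_{[0,N]^2}F_1((x,y)+(t,s))F_2((x,y)+B(t,s))\,dt\,ds.
\]
The standard decomposition writes $N^{-2}\chi_{[0,N]^2}$ as a smooth mollifier at scale $N$ plus a lacunary sum of mean-zero Littlewood--Paley pieces: the smooth part is dominated by a composition of one-dimensional Hardy--Littlewood maximal functions, exploiting the tensor product structure of the kernel in the coordinate orthogonal to the singularity (present for both $B$'s), while the oscillatory tail assembles into Calder\'on--Zygmund bilinear kernels of exactly the form treated in Sections \ref{sjckldjksecsecsec} and \ref{sec:semidegworse}. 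A bilinear version of Calder\'on's transference principle then transports this maximal inequality back to $X$.

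It then remains to exhibit a dense subclass on which pointwise convergence is direct; combined with the maximal inequality the Banach principle extends convergence to all $f,g\in L^\infty(X)$. For \eqref{averages2} the factorization
\[
\frac{1}{N^2}\sum_{n,m=1}^N f(T^nS^mx)g(T^mx)=\frac{1}{N}\sum_{m=1}^N g(T^mx)\cdot\left(\frac{1}{N}\sum_{n=1}^N f(T^n(S^mx))\right)
\]
together with Birkhoff's theorem identifies the inner average pointwise with $\E(f|\mathcal I_T)(S^mx)$, reducing the limit to a single Birkhoff average along $S$ of the bounded function $g\cdot(\E(f|\mathcal I_T)\circ S)$; convergence is then immediate on the dense class where $f$ decomposes as a $T$-coboundary plus a $T$-invariant function. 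For \eqref{averages1} I would decompose $L^2(X)$ with respect to the joint spectrum of the commuting Koopman operators $(T,S)$: on joint eigenfunctions the averages reduce to classical Weyl-type sums converging pointwise by equidistribution, while on the weakly mixing complement a van der Corput / Wiener--Wintner argument yields pointwise convergence to zero.

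The main obstacle is the maximal inequality: the smooth mean-value component of the averaging kernel lies outside the Calder\'on--Zygmund framework of the paper, so one must exploit the tensor structure specific to each $B$ to bootstrap the bounds of the paper into genuine $\sup_N$ estimates, likely via a Cotlar-type oscillation argument. The dense-class step for \eqref{averages1} is secondarily delicate, since the absence of factorization forces engagement with the joint spectrum of $(T,S)$ and, because $\lambda=-1$ is a root of unity, the rationally related spectral part must receive separate treatment.
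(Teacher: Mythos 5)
Your identification of the two averages with the matrices of Case 4 ($\lambda=-1$) and the nilpotent subcase of Case 2 is correct and matches the paper, but the convergence mechanism you propose (maximal inequality plus Banach principle plus dense subclass) has genuine gaps at both ends, and it is not the paper's route. The decisive problem is the dense subclass. For \eqref{averages1}, taking $S=\id$ shows these averages contain Bourgain's double recurrence averages \cite{Bo10}; on the weakly mixing component a van der Corput argument yields only norm ($L^2$) convergence, not pointwise convergence, and producing pointwise convergence there is essentially the whole content of Bourgain's theorem --- precisely the reason no soft dense-class argument is known. For \eqref{averages2}, your factorization is circular: after replacing the inner Birkhoff average by its limit you are left with $\frac1N\sum_{m=1}^N g(T^mx)\,\E(f|\I_T)(S^mx)$, which is \emph{not} a single Birkhoff average along $S$ of one fixed function (the two factors are sampled along different orbits); it is exactly an average of the open-problem type \eqref{hardaverages} for the commuting pair $(T,S)$, and the $T$-invariance of $\E(f|\I_T)$ does not reduce it further. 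In addition, the maximal inequality you posit is not proved in the paper and does not follow by a routine Cotlar-type argument from Theorem \ref{Mainthmforsemidegform}; bilinear maximal bounds require substantive extra work, and even granting them the Banach principle cannot close the argument without the dense class.

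What the paper does instead is to bypass dense classes altogether: after transferring from $X$ to $\Z^2$ and then to $\R^2$, it proves an \emph{oscillation} inequality (Theorem \ref{jdhcjh239`8e23-4elcm,ndvj89re}) for the discretized model sums, with norm growth $J^{1/4}$ in the number $J$ of oscillation blocks; the crucial point, as in \cite{Bo10} and \cite{D}, is that the exponent is strictly smaller than $1/2$, which by itself forces a.e.\ convergence. Technically this is obtained by rerunning the time-frequency argument of Section \ref{sub:1} with the stopping times $\kappa_j$ and the block functions $h_j$ absorbed into the third function, modifying only the $3$-size and the projection $\Pi_3$, and invoking the Bessel inequality from \cite{D}. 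If you want to salvage your write-up, you should replace the maximal-plus-dense-class scheme by this quantitative oscillation scheme; otherwise the proof does not go through.
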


More generally, we can consider the most general problem of this type, that of the convergence of the averages 
$$\frac1{N^2}\sum_{n=1}^{N}\sum_{m=1}^{N}f(T^{l_1(n,m)}S^{l_2(n,m)}x)g(T^{l_3(n,m)}S^{l_4(n,m)}x),$$
where $l_i$ are linear forms in $n$ and $m$. By doing a case analysis (that we omit) it turns out that all these averages are provable to converge, except for the ones mentioned in the beginning of the section (i.e. $l_1(n,m)=an$, $l_4(n,m)=bn$, $l_2=l_3=0$, and their equivalent versions). This follows either by applying time-frequency methods like in the case of the averages in Theorem \ref{applictoergth}, or by some trivial manipulations that reduce them to more familiar objects. An example of the latter kind is represented by the averages $$\frac1{N^2}\sum_{n=1}^{N}\sum_{m=1}^{N}f(T^nx)g(S^{m}x).$$
While a harmonic analytic approach for them seems unavailable at the moment\footnote{These averages are connected to the singular integral operators described in Case 6}, these averages are easily seen to separate in $n$ and $m$, and their convergence is immediate by the Pointwise Ergodic theorem.

The convergence results in Theorem \ref{applictoergth} are consequences of appropriate oscillation inequalities, as explained below. By using standard transference arguments (see for example \cite{BD}), one can easily show that the convergence is preserved if the probability space $(X,\Sigma,m)$ is replaced with a sigma finite measure space.

The first part of Theorem \ref{applictoergth}  implies the result from \cite{Bo10}, as can easily be seen by choosing $S$ to be the identity transformation. On the other hand, the averages in \eqref{averages2} are of a slightly different nature. While their convergence does not imply the result in  \cite{Bo10}, it nevertheless implies another important result in Ergodic theory, namely the convergence of the Wiener-Wintner averages. In an equivalent formulation, this result asserts the following: Given any dynamical system $(Y,\F,\mu,R)$, any $F\in L^{\infty}(Y)$ and any measurable function $N:Y\to [0,1)$, the averages
$$\frac1N\sum_{n=1}^{N}F(R^ny)e^{iN(y)n}$$
converge for almost every $y\in Y$. See \cite{A3} for an extensive discussion about the Wiener-Wintner property, and \cite{DLTT} and \cite{LTer} for extensions of this result to series. The above implication is easily seen by choosing the sigma finite space to be $X:=Y\times \R$ equipped with the product measure\footnote{$m_{\L}$ denotes the Lebesgue measure} $m:=\mu\times m_{\L}$, then choosing 
$T(y,\theta)=(y,\theta+1)$, $S(y,\theta)=(Ry,\theta)$ and $f(y,\theta)=F(y)$, $g(y,\theta)=e^{iN(y)\theta}$.

We now say a few words about the proof of Theorem \ref{applictoergth}. The argument follows the same lines as that in \cite{D}, with the extra infusion  of techniques developed in this paper. We briefly touch the main points, and leave the details to the interested reader. Let us focus on \eqref{averages1}. Standard transfer between $X$ and $\R^2$ using $\Z^2$ as a mediator\footnote{The transfer from $\R^2$ to $\Z^2$ is  done by using functions constant on all the lattice squares of sidelength 1.  
The transfer from  $\Z^2$ to $X$ is then mediated by functions living on $x$-orbits, that is functions of the form $F(n,m)=f(T^nS^mx)$} shows that it suffices to prove an oscillation inequality for
$$\sum_k\int_{R^2}F_1(x+t,y+s)F_2(x-t,y+s)\Psi_k(t)\Phi_k(s)dtds.$$
We indicate more precisely what this means. Fix an integer $J$ and  a finite sequence of integers $\textbf{U}:=u_1<u_2<...<u_J$.
We restrict attention to the cone in Section \ref{sub:1}, so we will use the notation in there.
\begin{theorem}
\label{jdhcjh239`8e23-4elcm,ndvj89re}
For each $2<p_1,p_2,p_3<\infty$ satisfying $\frac{1}{p_1}+\frac{1}{p_2}+\frac{1}{p_3}=1$, we have
\be{oscillationinec}
\|(\sum_{j=1}^{J-1}\sup_{k\in \Z\atop{u_j\le k<u_{j+1}}}|\sum_{Q=\omega\times\omega\times\bar\omega\in {\bf Q}\atop_{2^k\le |\omega|<2^{u_{j+1}}}}(\pi_{\omega}^{(1)}F_1\pi_{\omega}^{(2)}F_2)*m_{\bar\omega}(x,y)|^2)^{1/2}\|_{p_3'}\lesssim J^{1/4}\|F_1\|_{p_1}\|F_2\|_{p_2},
\end{equation}
where $m_{\bar\omega}$ is a multiplier addapted to $\bar\omega\times [|\bar\omega|,2|\bar\omega|]$.
Moreover, the implicit constant is independent of $J$ and $\textbf{U}$.
\end{theorem}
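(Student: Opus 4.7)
The plan is to dualize (\ref{oscillationinec}) and to run the time-frequency analysis of Section \ref{sub:1} in a vector-valued fashion. First, for each $j$ I pick a measurable selector $k_j\colon\R^2\to\{u_j,\ldots,u_{j+1}-1\}$ at which the inner supremum is (essentially) attained. Setting
$$ V_j(x,y) := \sum_{Q=\omega\times\omega\times\bar\omega\in{\bf Q},\;2^{k_j(x,y)}\le|\omega|<2^{u_{j+1}}}(\pi^{(1)}_\omega F_1\,\pi^{(2)}_\omega F_2)*m_{\bar\omega}(x,y), $$
the left-hand side of (\ref{oscillationinec}) becomes $\|(\sum_j|V_j|^2)^{1/2}\|_{p_3'}$, and dualizing this $L^{p_3'}(\ell^2_j)$-norm against a vector $G=(G_j)$ with $\|G\|_{L^{p_3}(\ell^2_j)}\le 1$ transfers the convolution by $m_{\bar\omega}$ onto $G_j$ as a two-dimensional projection adapted to $\bar\omega\times[|\bar\omega|,2|\bar\omega|]$. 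This places the problem inside the trilinear framework of Section \ref{sub:1} with $G_j$ playing the role of $F_3$, subject to the extra selector constraint $|\omega|\ge 2^{k_j(x,y)}$.

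Next I split $V_j=B_j-W_j$, where $B_j$ is the full block sum over $|\omega|\in[2^{u_j},2^{u_{j+1}})$ (selector-free) and $W_j$ is the short selector-dependent piece. The output $B_j$ is frequency-localized in the second variable $\eta$ to the dyadic band $|\eta|\in[2^{u_j-1},2^{u_{j+1}+2}]$, and since the $u_j$ are strictly increasing these bands for distinct $j$ are essentially disjoint. Consequently vector-valued Littlewood--Paley together with the continuous form of Theorem \ref{main2} yields
$$ \|({\textstyle\sum_j}|B_j|^2)^{1/2}\|_{p_3'}\lesssim\|{\textstyle\sum_j}B_j\|_{p_3'}\lesssim\prod_{i=1}^2\|F_i\|_{p_i}, $$
which is $J$-independent, so the block-endpoints contribute nothing to the $J$-growth.

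The burden therefore rests on $W_j$, for which I aim at
$$ \|({\textstyle\sum_j}|W_j|^2)^{1/2}\|_{p_3'}\lesssim J^{1/4}\prod_{i=1}^2\|F_i\|_{p_i} $$
via interpolation between two endpoint bounds for the same operator. On one side I use the trivial embedding $\ell^\infty_j\hookrightarrow\ell^2_j$ to dominate by a scalar bilinear \emph{maximal truncation} $\sup_k|\sum_{|\omega|\ge 2^k}(\pi^{(1)}_\omega F_1\pi^{(2)}_\omega F_2)*m_{\bar\omega}|$; bounding this maximal truncation in $L^{p_3'}$ independently of the selector yields the stated inequality with $J^{1/2}$ loss. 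On the other side I invoke Plancherel in the $\eta$ variable: within a fixed $j$, the scales $l\in[u_j,k_j(x,y))$ contribute pieces of $W_j$ whose outputs live in pairwise disjoint dyadic $\eta$-bands, so another application of vector-valued Littlewood--Paley and Theorem \ref{main2} produces an $L^{2}$-type estimate with \emph{no} loss in $J$. Real interpolation between these two endpoints delivers the $J^{1/4}$ factor at the intermediate $p_3'$ required by the theorem.

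The hard part is the bilinear maximal truncation that underlies the $J^{1/2}$-endpoint: this is not formally provided by Theorem \ref{Mainthmforsemidegform}, though the tile framework of Section \ref{sjckldjksecsecsec} is well suited to such maximal estimates. I plan to resolve the supremum tile-by-tile inside the tree selection of Section \ref{sec:345}, in the spirit of the classical treatment of the Carleson maximal operator in \cite{LT3}; the per-tree extra cost is at most a logarithmic loss in Proposition \ref{mainsingletreeestimate5}, which is absorbed harmlessly by the subsequent interpolation. The vector-valued adaptations needed on the third slot — namely sizes and a Bessel inequality for the family $(G_j)_j$ — are a routine upgrade of Proposition \ref{selection} via Khintchine's inequality, and should not introduce any genuinely new obstruction.
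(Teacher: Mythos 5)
Your linearization step is the same as ours (choosing selectors $k_j$ and dualizing the $\ell^2_j$-norm is exactly our choice of stopping times $\kappa_j$ and functions $h_j$ with $\sum_j|h_j|^2\equiv 1$, i.e. $G_j=F_3h_j$), and your treatment of the selector-free block sums $B_j$ by vector-valued Littlewood--Paley in $\eta$ plus Theorem \ref{main2} is sound. The gap is in the selector-dependent part $W_j$, which is where the whole content of the $J^{1/4}$ bound lies. Your ``no-loss $L^2$-type endpoint'' does not exist as described: since the truncation point $k_j(x,y)$ varies with the point, $W_j$ is not a superposition of outputs lying in pairwise disjoint $\eta$-bands --- each scale-$l$ piece gets multiplied by the rough indicator $\chi_{\{l<k_j(x,y)\}}$, which destroys its output frequency localization, so neither Plancherel in $\eta$ nor vector-valued Littlewood--Paley applies. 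In effect $W_j$ is itself a maximally truncated object; if a soft selector-free bound of this kind were available, one would obtain the oscillation inequality with essentially no growth in $J$, and with it maximal-truncation results that are known to be genuinely hard. Your other endpoint has the same problem in a different guise: the $\ell^\infty\hookrightarrow\ell^2$ step requires $L^{p_3'}$ bounds for the bilinear maximal truncation $\sup_k|\sum_{|\omega|\ge 2^k}(\pi^{(1)}_\omega F_1\,\pi^{(2)}_\omega F_2)*m_{\bar\omega}|$, which is a theorem of the strength of the maximal multilinear estimates in \cite{DTT}, not something obtainable from Proposition \ref{mainsingletreeestimate5} with ``at most a logarithmic per-tree loss.'' Finally, even granting both endpoints, they are bounds for the same quantity at the same exponent triple, so ``real interpolation'' between a $J^{1/2}$ bound and a $J^{0}$ bound is not a legitimate way to produce $J^{1/4}$ at the fixed $p_3'$; you would need a second genuinely varying parameter, and none is set up.

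For comparison, the proof in the paper never separates the selector from the tile machinery: after the linearization, the truncation $\chi_{2^{\kappa_{j(\omega)}(\cdot,\cdot)}\le|\omega|}$ and the block factor $h_{j(\omega)}$ are absorbed into the third function, producing a model sum identical to \eqref{mainineq1444} except for a modified $3$-size and a modified projection $\Pi_3$. The tree estimate then goes through Propositions \ref{prop:paraproduct11286} and \ref{phase-space} unchanged; the analogue of Lemma \ref{upperboundestforsize} is proved by an explicit computation using orthogonality between scales and the (linear, classical) boundedness of maximal truncations of two-dimensional singular integrals; and the analogue of the Bessel inequality of Proposition \ref{selection} for this truncated third function is quoted from Proposition 5.10 of \cite{D}. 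It is that Bessel inequality, inside the tree-counting scheme, that produces the exponent $1/4<1/2$ --- not an a posteriori interpolation of finished bilinear estimates. If you want to salvage your outline, the selector-dependent piece must be fed back into the size/Bessel framework along these lines rather than handled by frequency-disjointness or by an unproven bilinear maximal truncation theorem.
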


The important thing in the oscillation inequality above is that the exponent of $J$ is strictly smaller than $1/2$. See \cite{D} for more details.

Consider an arbitrary sequence of functions $h_1,h_2,\ldots,h_{J-1}:\R^2\to\C$ satisfying 
$$\sum_{j=1}^{J-1}|h_j|^2\equiv 1,$$
and also an arbitrary  function $F_3\in L^{p_3}(\R^2)$. We denote by $j(\omega)$ the unique number in $\{1,2,\ldots,J-1\}$ such that $2^{u_{j(\omega)}}\le |\omega|<2^{u_{j(\omega)+1}}$ and by $F_{3,\omega}:=F_3h_{j(\omega)}$. We consider the stopping times $\kappa_j:\R^2\to\{u_j,u_{j}+1,\ldots,u_{j+1}-1\}$, for each $1\le j \le J-1$. Using these, \eqref{oscillationinec} is equivalent to proving that
$$
\sum_{P\in \P}\int_{\R^2} \chi_{R_P,j_P}(x,y)\pi_{\omega}^{(1)}F_1\pi_{\omega}^{(2)}F_2\pi_{\bar\omega}^{(3)}(\chi_{2^{\kappa_{j(\omega)}(\cdot,\cdot)}\le |\omega|<2^{u_{j(\omega)+1}}}F_{3,\omega}(\cdot,\cdot))(x,y)$$
$$\lesssim J^{1/4}\|F_1\|_{p_1}\|F_2\|_{p_2}\|F_3\|_{p_3},$$
where $\pi_{\bar\omega}^{(3)}$ is the projection associated with $m_{\bar\omega}$.
The only difference between this and \eqref{mainineq1444} is the fact that the third function incorporates an extra truncation and an extra block localization. We will have exactly the same kind of trees and sizes for $i\in\{1,2\}$ as in section Section \ref{sub:1}, the only difference being the 3-size, which will have to incorporate these two new ingredients. We define instead the 3-size by
$$\size_3(\T):=\left(\frac{1}{|R_\T|}\sum_{P\in\T}\sup_{m_P}\|\tilde{\chi}_{R_P}^{10}(x,y)T_{m_P}(\chi_{2^{\kappa_{j(\omega_P)}(\cdot,\cdot)}\le |\omega_P|<2^{u_{j(\omega_P)+1}}}F_{3,\omega_P}(\cdot,\cdot))(x,y)\|_{L^2_{x,y}}^2\right)^{1/2},$$
where  $m_P$ ranges over all functions adapted to $\bar\omega_{P}\times [|\bar\omega_{P}|, 2|\bar\omega_{P}|]$.

The phase-space projections in the case $i\in\{1,2\}$, and all the estimates in  Proposition \ref{phase-space} are the same. The only difference is in how we define the phase-space projection of $F_3$. 
We define 
$$\Pi_3(F_3):=\sum_{j\in\J_\T}\tilde{\tilde\chi_j}\tilde\pi_j(\chi_{G_j}F_{3,j})(x,y),$$
where $F_{3,j}=F_{3,\omega}$ and $G_j=\{(x,y): 2^{\kappa_{j(\omega)}(x,y)}\le 2^j\}$ if $|\omega|=2^j$.

We then use 
Proposition \ref{prop:paraproduct11286} and Proposition \ref{phase-space} in the same way as before to get Proposition \ref{mainsingletreeestimate5}. Two things remain to be proved in order to conclude the proof of Theorem \ref{jdhcjh239`8e23-4elcm,ndvj89re}: a bound for $\size_3^{*}$ like the one in Lemma \ref{upperboundestforsize}, and a Bessel type inequality like the one in Proposition \ref{selection}. The first estimate follows by writing
\begin{align*}
\frac{1}{|R_\T|}&\sum_{P\in\T}\|\tilde{\chi}_{R_P}^{10}(x,y)T_{m_P}(\chi_{2^{\kappa_{j(\omega_P)}(\cdot,\cdot)}\le |\omega_P|<2^{u_{j(\omega_P)+1}}}F_{3,\omega_P}(\cdot,\cdot))(x,y)\|_{L^2_{x,y}}^2
\\&\lesssim \frac{1}{|R_\T|}\int \tilde{\chi}_{R_\T}^{8}(x,y)\sum_{j=1}^{J-1}\sum_{u_j\le k<u_{j+1}-1}
\sum_{|\omega_P|=2^k} |T_{m_P}(\chi_{2^{\kappa_{j}(\cdot,\cdot)}\le 2^k} 
F_3(\cdot,\cdot)h_{j}(\cdot,\cdot))(x,y)|^2dxdy\\&\lesssim  \frac{1}{|R_\T|}\int \tilde{\chi}_{R_\T}^{8}(x,y)\sum_{j=1}^{J-1}|F_3(x,y)h_{j}(x,y)|^2dxdy\\&= \frac{1}{|R_\T|}\int  \tilde{\chi}_{R_\T}^{8}(x,y)F_3^2(x,y)dxdy,
\end{align*}
with the penultimate inequality following from the orthogonality of the $T_{m_P}$ for distinct scales, duality and the boundedness of the maximal truncations of two dimensional singular integral operators.

On the other hand, the needed Bessel type inequality was proved in Proposition 5.10. in \cite{D}. That is a one dimensional result, but, as explained before,  the extension to our two dimensional context requires no serious modifications.

\end{document}